\newtheorem{theorem}{Theorem}[section]
\newtheorem{lemma}[theorem]{Lemma}
\newtheorem{proposition}[theorem]{Proposition}
\newtheorem{definition}[theorem]{Definition}
\theoremstyle{definition}
\newtheorem{assumption}[theorem]{Assumption}
\newtheorem{remark}[theorem]{Remark}
\newcommand{\R}[0]{\mathbb{R}}
\newcommand{\N}[0]{\mathbb{N}}
\newcommand{\mfrak}[1]{\mathfrak{#1}}
\newcommand{\mscr}[1]{\mathscr{#1}}
\newcommand{\mcal}[1]{\mathcal{#1}}
\newcommand{\bb}[1]{\mathbb{#1}}
\newcommand{\norm}[1]{\Vert #1\Vert}
\newcommand{\Norm}[1]{\left\Vert #1 \right\Vert}
\newcommand{\rd}[0]{\mathrm{d} }
\newcommand{\abs}[1]{\vert #1 \vert}
\newcommand{\ang}[1]{\langle #1 \rangle}
\newcommand{\Abs}[1]{\left\vert#1\right\vert}
\newcommand{\Ang}[1]{\left\langle #1 \right\rangle}
\newcommand{\closure}[1]{\overline{#1}} 
\newcommand{\covariate}[0]{x} 
\newcommand{\covariatespace}[0]{\mcal{X}} 
\newcommand{\covariatelaw}[0]{\law{X}} 
\newcommand{\evalmap}[0]{\mscr{E}} 
\newcommand{\paramspace}[0]{\mcal{P}} 
\newcommand{\cpm}[0]{\theta}  
\newcommand{\cpmspace}[0]{\Theta}  
\newcommand{\regspace}[0]{\mathcal{R}} 
\newcommand{\Lipschitz}[0]{\textup{Lip}}
\newcommand{\fwdmodel}[0]{\mcal{G}} 
\newcommand{\noise}[0]{\varepsilon} 
\newcommand{\informoperator}[0]{\bb{I}_{\cpm_0}^{\ast}\bb{I}_{\cpm_0}} 
\newcommand{\normaldist}[2]{\mcal{N}\left(#1,#2\right)} 
\newcommand{\distiid}[0]{\stackrel{\textup{iid}}{\sim}}
\newcommand{\spn}[0]{\textup{span}} 
\newcommand{\law}[1]{\mu_{#1}} 
\newcommand*{\todo}[1]{\bgroup\color{red}TODO: #1\egroup}
\DeclareMathOperator*{\dimension}{dim}
\numberwithin{equation}{section}
\begin{document}
\title{Bayesian inference of covariate-parameter relationships for population modelling}

\author{Han Cheng Lie\thanks{~han.lie@uni-potsdam.de}}
\affil{~Institut f\"ur Mathematik, Universit\"at Potsdam, Campus Golm, Potsdam OT Golm 14476, Germany}
\renewcommand\Affilfont{\small}

\date{}
\maketitle

\begin{abstract}
We consider population modelling using parametrised ordinary differential equation initial value problems (ODE-IVPs). For each individual drawn randomly from the unknown population distribution, the corresponding parameters for the ODE-IVP cannot be measured directly, but a vector of covariates is given, and one component of the solution to the corresponding ODE-IVP is observed at a fixed finite time grid.  
The task is to identify a covariate-parameter relationship that maps covariate vectors to parameter vectors.
Such settings and problems arise in pharmacokinetics, where the observations are blood drug concentrations, the covariates are clinically observable quantities, and the covariate-parameter relationship is used for personalised drug dosing.
For linear homogeneous ODE-IVPs with vector fields defined by matrices that are diagonalisable over $\mathbb{R}$, and for fixed time and random covariate design, we use recent results of Nickl et al. for Bayesian nonlinear statistical inverse problems, to prove posterior contraction and Bernstein--von Mises results for the unknown covariate-parameter relationship. We analytically demonstrate our results on an example from the pharmacokinetics literature.
\end{abstract}

\textbf{Keywords:} ordinary differential equation initial value problem, population modelling, covariate parameter relationship, statistical inverse problem, nonparametric Bayes


\section{Introduction}
\label{sec_introduction}

Many mathematical models for deterministic continuous dynamical systems are expressed in terms of ordinary differential equation initial value problems (ODE-IVPs) where the vector field or the initial condition depend on a vector of parameters.
An important task when applying such parametrised ODE-IVPs is to identify suitable parameter values so that the models better fit experimental data, or have better predictive power.
This task becomes more challenging when the task is to model individuals in a population using the same ODE but different parameters, and when the number of observations of the ODE solution for each individual is limited, so that estimates of the parameter vector for each individual may not be available or sufficiently accurate.
Such population modelling and statistical estimation problems arise frequently in pharmacokinetics, in the context of compartment models; see e.g. \cite{Wakefield_etal_1999,Joerger2012}.

A two-compartment model involves representing a patient as a two-dimensional dynamical system $s(t)=(s_1(t),s_2(t))$, where $s_1(t)$ describes the concentration of a drug in the so-called `central compartment' of the patient and $s_2(t)$ describes the concentration of the drug in the so-called `peripheral compartment'. The `volume' of the central and peripheral compartments is denoted by $V_1$ and $V_2$ respectively. In many cases, the central compartment includes the circulatory system together with the target region of the body, and the peripheral compartment is the complement of the central compartment.
Suppose that the rate of flow between compartments is equal to $Q$ in both directions, that the drug dosage is proportional to the patient's weight $w$ and administered directly to the central compartment at time $0$, and that the drug is removed from the patient's body via the same compartment at some `elimination clearance' rate $CL$. This yields the following ODE-IVP on a predetermined time interval $[0,T]$:
\begin{equation}
\label{eq_ODE_IVP_two_compartment_model}
 \begin{aligned}
    V_1 \frac{\rd }{\rd t} s_1 (t)=&Q(s_2(t)-s_1(t))-CL\ s_1(t),& V_1 s_1(0)&=D_0 w,
    \\ 
    V_2\frac{\rd}{\rd t} s_2(t)=&Q(s_1(t)-s_2(t)),& s_2(0)&=0.
\end{aligned}
\end{equation}
For the ODE-IVP above, the vector $(CL,V_1,Q,V_2)$ of parameters cannot be observed or measured in a clinical setting.
The task of estimating the vector $(CL,V_1,Q,V_2)$ of parameters for each patient using observations of the concentration $s_1(t)$ at discrete times $t_1,\ldots, t_{d_o}$ is often complicated by the fact that only a small number of blood samples per patient may be collected; see e.g. \cite[Section 3]{Wakefield_etal_1999} and \cite[p. 136]{Mesnil1998}.

One approach to tackling these challenges is to exploit the information that is available in the form of so-called `covariates', such as the gender, age, or weight of the patient.
Unlike the parameters of the ODE-IVP, the covariates can be measured in clinical settings.
Given a choice of covariates and a parametrised ODE-IVP, one needs to find a function that maps the vector of covariates to the vector of parameters.
In the pharmacokinetics literature, this function is sometimes referred to as a `parameter-covariate relationship' or a `covariate-parameter relationship'. 
We shall refer to this function as the `covariate-to-parameter map' (CPM).

The importance of CPMs in pharmacology is that they can be used to determine dosing regimes that are tailored to each patient: given a patient's covariate vector, the CPM indicates the corresponding parameter vector that can be used in the ODE-IVP to model how the drug concentration in the patient evolves over time.
Tailored dosing regimes can help to improve the efficacy of the drug treatment and reduce the risk of adverse side-effects.

In many cases, the CPM is chosen to be a simple function of a covariate. 
Piecewise linear functions, power functions, and exponential functions are frequently used; see e.g. \cite[p. 1465]{Jonsson1998}, \cite[pp. 121-122]{Joerger2012}, and \cite[Equation (2.11)]{Tan2019}.
The choice of functional forms of the covariate-parameter relationship is often motivated by convenience \cite[p. 51]{Lai2006}.
This raises the question of whether one can infer the true CPM in a more data-driven way, under fewer assumptions on the functional form of the CPM.

The goal of this paper is to establish some mathematical foundations for a Bayesian statistical approach for inferring the CPM from partial observations of solutions to parametrised linear homogeneous ODE-IVPs, where the matrix that defines the vector field is diagonalisable over $\R$, and only finitely many observations of one component of the solution are available. We demonstrate the applicability of this approach on an example from pharmacokinetics.

\paragraph{Outline} 
In \Cref{sec_properties_of_solutions_to_linear_ODE_IVPs}, we consider the collection of parametrised linear homogeneous ODE-IVPs with diagonalisable right-hand sides. 
We state and prove some properties of the function that maps a parameter vector to the corresponding vector of observations of one component of the solution. 
In \Cref{sec_forward_operator_fixed_time_random_covariate_design}, we use the results from \Cref{sec_properties_of_solutions_to_linear_ODE_IVPs} to analyse forward operators associated to a fixed, finite time design and random covariate design.
The main results in this section consist of local boundedness, local Lipschitz continuity, and stability estimates of the forward operator.
In \Cref{sec_properties_of_Bayesian_inverse_problem}, we define a Bayesian inverse problem in which the object to be inferred is the true CPM that is valid for the entire population.
We apply the framework developed by Nickl et al. that is presented in \cite{Nickl2023}, and define a family of posteriors associated to a family of rescaled Gaussian priors.
For these posteriors, we establish posterior contraction, local asymptotic normality, and a Bernstein--von Mises theorem.
In \Cref{sec_application_two_compartment_model}, we apply a posterior contraction result from \Cref{sec_properties_of_Bayesian_inverse_problem} to an example of a two-compartment model from the pharmacokinetics literature \cite{Hartung2021,Robbie2012}.
We conclude in \Cref{sec_conclusion}. 

\paragraph{Contributions} Our analysis uses the theoretical framework presented in \cite{Nickl2023} for Bayesian nonlinear statistical inverse problems with rescaled Gaussian process priors.
In this framework, the key challenge lies in verifying that the forward operator satisfies certain regularity and stability estimates.
We are not aware of any other work in the literature where this framework is applied to infer CPMs in parametrised ODE-IVPs.
We believe that the estimates in \Cref{sec_forward_operator_fixed_time_random_covariate_design} and \Cref{sec_properties_of_Bayesian_inverse_problem} are new, both in the context of parametrised ODE-IVPs and in the context of statistical inference for pharmacokinetics.
In addition, while we expect that some of the results in \Cref{sec_properties_of_solutions_to_linear_ODE_IVPs} are known in the theory of ODEs, \Cref{lem_locally_Lipschitz_inverse_evaluation_map} --- which states the local Lipschitz continuity of the inverse of the parameter to observable map --- may be new.

Our strategy is to prove estimates on the forward operator that hold uniformly with respect to the covariate. 
In this way, we can prove estimates of the form given in \cite[Chapter 2]{Nickl2023} without imposing any regularity assumptions on the unknown covariate distribution, such as admitting a Lebesgue density.
This is advantageous for applications in pharmacokinetics, where in general one has very little knowledge about the true population distribution of covariates, and where the covariates may take discrete values.

\subsection{Related work}
\label{ssec_related_work}

For deterministic differential equations, the work \cite{Chkrebtii2016} considers the approximation of solutions of deterministic differential equations by numerical integration methods, and shows the consistency of the family of posteriors indexed by the resolution parameter of the numerical method.
In contrast, we consider standard posterior contraction in the `classical' limit of infinite data.

The papers \cite{Bhaumik2015,Bhaumik2017,Tan2019} develop a two-step approach to the task of parameter inference for ODEs, where the data are obtained by discrete observations of the solutions to the parametrised ODE and the ODE does not admit a closed-form solution, but the ODE vector field is known.
In the first step, the parameter inference problem is expressed as a problem of regressing the data against the unknown parameters, where the regression function is modelled nonparametrically, e.g. using B-splines.
The second step uses the `derivative matching' idea in that a minimisation problem is solved to find the parameter values that best match the observed values of the ODE vector field.
The recent paper \cite{Bhaumik2022} develops this approach to the case where the constraint is expressed by a PDE instead of an ODE.
For the above-mentioned papers, Bayesian methods are used to establish properties such as posterior consistency and Bernstein--von Mises theorems in the random design setting, for a finite-dimensional vector of unknowns.
In contrast, we aim to infer an infinite-dimensional object, namely the CPM that maps every covariate vector to the corresponding vector of parameters to be used in the ODE-IVP, without considering a specific basis.

The review \cite{Joerger2012} presents some ideas of covariate pharmacokinetic model building, and \cite{Wakefield_etal_1999} focuses on the Bayesian approach to population pharmacokinetics.
The work \cite{Maitre1991} uses Bayesian regression to estimate pharmacokinetic parameters from observed drug concentrations.
In \cite{Mandema1992}, generalised additive models and splines were used in a parametric approach to describe the CPM.
This approach was further analysed and modified in \cite{Jonsson1998}.
In contrast to the parametric or finite-dimensional approaches described above, the paper \cite{Mesnil1998} used the nonparametric maximum likelihood estimation to estimate the joint distribution of parameters and covariates, but not the CPM itself. 
The work \cite{Lai2006} uses regression splines or neural networks to estimate the CPM, but does not prove posterior contraction.
\cite{Hartung2021} tests the goodness-of-fit of certain parametric classes of covariate-parameter relationships against nonparametric alternatives, using kernel-based Tikhonov regularisation and ideas from statistical learning, but does not consider a Bayesian approach.
The recent work \cite{Abhishake2023} studies the problem of predicting changes in drug concentrations for pharmacokinetic models, but uses regularisation schemes and statistical learning instead of a Bayesian approach.

\subsection{Notation}
\label{ssec_notation}

For $n\in\N$, $[n]\coloneqq \{1,\ldots,n\}$. For $x\in\R^{n}$, $A\in\R^{m\times n}$, and $0<q\leq \infty$, $\norm{x}_{q}\coloneqq (\sum_{i=1}^{n}\abs{x_i}^{q})^{1/q}$, $\norm{A}_{q}\coloneqq  \sup_{\norm{x}_{q}\leq 1}\norm{Ax}_{q}$, and $B_2(x,r)\coloneqq \{z: \norm{z-x}_2<r\}$.
We denote the set of invertible elements of $\R^{n\times n}$ by $GL(n,\R)$.
For $a,b\in\R$, $a\wedge b\coloneqq \min\{a,b\}$ and $a\vee b\coloneqq \max\{a,b\}$.
We denote the closure and cardinality of a subset $A$ of some Euclidean space by $\closure{A}$ and $\# A$ respectively. 

If $(V,\norm{\cdot}_V)$ is a Banach space and $W\subset V$ is a linear subspace, then we denote by $\overline{W}\vert_{V}$ the closure of $W$ with respect to the norm $\norm{\cdot}_V$, and $B_{V}(x,r)\coloneqq \{v\in V\ :\ \norm{v-x}_V<r\}$.
For $d_1,d_2\in\N$, nonempty Borel sets $D_1\subseteq\R^{d_1}$,  $D_2\subseteq\R^{d_2}$, a measure $\mu$ on $D_1$, and $0<q\leq\infty$, let $\norm{f}_{L^q_\mu}\coloneqq(\int_{D_1} \abs{f(x)}_{2}^{q}\mu(\rd x))^{1/q}$ and $L^q_\mu(D_1,D_2)$ denote the corresponding Banach space. Let $\norm{f}_\infty\coloneqq \sup_{x\in D_1}\norm{f(x)}$.
For $\beta\geq 0$, denote by $H^{\beta}_{\mu}(D_1,D_2)$ the Sobolev space of functions from $D_1$ to $D_2$ with weak derivatives of up to order $\beta$ that are $L^2_{\mu}$-integrable, with $H^0_{\mu}(D_1,D_2)=L^2_{\mu}(D_1,D_2)$.
Denote the space of bounded, continuous functions from $D_1$ to $D_2$ by $C_b(D_1,D_2)$.

The notation $a\leftarrow b$ means that $a$ is replaced with $b$, while $a\lesssim b$ means that $a\leq Cb$ for some positive scalar $C$ that does not depend on $a$ or $b$.

We fix a common underlying probability space. Given a random variable $Z$ defined on this probability space, $\law{Z}$ denotes the law of $Z$.

We denote the set of admissible covariates by $\covariatespace\subset \R^{d_\covariate}$, the set of probability measures on $\covariatespace$ by $\mcal{M}_1(\covariatespace)$, the true unknown population distribution of covariates by $\covariatelaw$, the open set of admissible parameters by $\paramspace\subset \R^{d_p}$, and the set of admissible CPMs by $\cpmspace$.
Both $\covariatespace$ and $\paramspace$ have nonempty interior.
We denote an arbitrary element of $\covariatespace$, $\paramspace$, and $\cpmspace$ by $x$, $p$, and $\cpm$ respectively.
In \Cref{sec_forward_operator_fixed_time_random_covariate_design}, we use $\cpm_0$ to denote a reference CPM, and in \Cref{sec_properties_of_Bayesian_inverse_problem} the reference CPM $\cpm_0$ is taken to be the true data-generating CPM.

\section{Properties of solutions to linear ODE-IVPs}
\label{sec_properties_of_solutions_to_linear_ODE_IVPs}

In this section, we specify the type of ODE-IVPs that we shall consider and state some results about their solutions.
These results are essential for proving desirable properties of the forward operator --- e.g. local Lipschitz continuity and stability --- that we introduce in \Cref{sec_forward_operator_fixed_time_random_covariate_design}. This forward operator defines the Bayesian nonlinear statistical inverse problem that we analyse in \Cref{sec_properties_of_Bayesian_inverse_problem}.

Fix a bounded time interval $[0,T]$ for some $0<T<\infty$. 
Consider a time-homogeneous, linear ODE-IVP in $\R^{d_s}$, $d_s\in\N$, where both the matrix $A:\R^{d_p}\to\R^{d_s\times d_s}$ and the initial condition $s_0:\R^{d_p}\to\R^{d_s}$ depend on a parameter vector $p\in\R^{d_p}$:
\begin{equation}
 \label{eq_ODE_IVP}
s(0,p)=s_0(p)\in\R^{d_s},\quad \frac{\rd}{\rd t} s(t,p)=A(p)s(t,p),\quad t\in[0,T].
\end{equation}
We refer to $(s(t,p))_{t\in[0,T]}$ as the `solution' of \eqref{eq_ODE_IVP} for the parameter $p$, and $s(t,p)$ as the `state' at time $t$ of this solution.

The following assumption states that both the matrix $A(p)$ and initial condition $s_0(p)$ in \eqref{eq_ODE_IVP} are locally bounded and locally Lipschitz continuous functions of $p$.
\begin{assumption}
    \label{asmp_local_boundedness_and_local_lipschitz_continuity_of_matrix_and_IC}
   For every $M>0$ there exists some $C_1(M)>0$ such that 
\begin{subequations}
\begin{align}
\sup\{ \norm{A(p)}_2\vee \norm{s_0(p)}_2:p\in B_2(0,M)\}\leq& C_1(M),    
\label{eq_local_bound_on_operator_norm_of_A_and_IC}
\\
\sup\left\{ \norm{A(p)-A(q)}_2\vee \norm{s_0(p)-s_0(q)}_2:p,q\in B_2(0,M)\right\}\leq &C_1(M)\norm{p-q}_2.
\label{eq_local_bound_on_Lipschitz_constant_of_A_and_IC}
\end{align}
\end{subequations}
\end{assumption}
The next result shows that \Cref{asmp_local_boundedness_and_local_lipschitz_continuity_of_matrix_and_IC} implies local boundedness and local Lipschitz continuity of the solution of \eqref{eq_ODE_IVP}, viewed as a function of the parameter $p$.
We expect that this result is known in the theory of parametrised ODE-IVPs, but state and prove it for the sake of completeness.
\begin{lemma}
\label{lem_local_boundedness_of_solution_operator}
Suppose \Cref{asmp_local_boundedness_and_local_lipschitz_continuity_of_matrix_and_IC} holds with constant $C_1(M)$ for every $M$.
Then for every $M>0$ and $t\in [0,T]$, 
\begin{equation}
    \label{eq_local_boundedness_of_solution_operator}
    \forall (t,p)\in [0,T]\times B_2(0,M),\quad \norm{s(t,p)}_2\leq e^{C_1(M)t}C_1(M),
\end{equation}
and there exists $L=L(C_1(M),T)>0$ such that
\begin{equation}
\label{eq_local_Lipschitz_continuity_of_solution_operator}
\forall p,q\in B_2(0,M),\quad \sup_{t\in[0,T]}\norm{s(t,p)-s(t,q)}_2\leq L\norm{p-q}_2.
\end{equation}
\end{lemma}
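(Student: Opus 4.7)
Both estimates will follow from Grönwall's inequality applied to the Volterra integral form of the ODE, using only the two assumed bounds \eqref{eq_local_bound_on_operator_norm_of_A_and_IC}--\eqref{eq_local_bound_on_Lipschitz_constant_of_A_and_IC}. There is no need to diagonalise $A(p)$ or use anything about matrix exponentials beyond the basic existence/uniqueness for linear ODEs.

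For \eqref{eq_local_boundedness_of_solution_operator}, I would fix $M>0$ and $p\in B_2(0,M)$, and rewrite \eqref{eq_ODE_IVP} in integrated form
\[
s(t,p)=s_0(p)+\int_0^t A(p)\,s(\tau,p)\,\rd\tau.
\]
Taking $\norm{\cdot}_2$ and using \eqref{eq_local_bound_on_operator_norm_of_A_and_IC} gives
\[
\norm{s(t,p)}_2\leq C_1(M)+C_1(M)\int_0^t\norm{s(\tau,p)}_2\,\rd\tau,
\]
and Grönwall's inequality yields the claimed bound $\norm{s(t,p)}_2\leq C_1(M)e^{C_1(M)t}$, uniformly in $p\in B_2(0,M)$.

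For \eqref{eq_local_Lipschitz_continuity_of_solution_operator}, I would set $u(t)\coloneqq s(t,p)-s(t,q)$ for $p,q\in B_2(0,M)$ and write the difference equation
\[
u(t)=\bigl(s_0(p)-s_0(q)\bigr)+\int_0^t A(p)u(\tau)\,\rd\tau+\int_0^t\bigl(A(p)-A(q)\bigr)s(\tau,q)\,\rd\tau.
\]
Using \eqref{eq_local_bound_on_operator_norm_of_A_and_IC}--\eqref{eq_local_bound_on_Lipschitz_constant_of_A_and_IC} and the already-proved bound on $\norm{s(\tau,q)}_2$, the last term is bounded by $T\,C_1(M)^2 e^{C_1(M)T}\norm{p-q}_2$, the initial-condition term by $C_1(M)\norm{p-q}_2$, and the first integral contributes a factor $C_1(M)\int_0^t\norm{u(\tau)}_2\,\rd\tau$. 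Collecting these gives
\[
\norm{u(t)}_2\leq C_1(M)\bigl(1+T\,C_1(M)e^{C_1(M)T}\bigr)\norm{p-q}_2+C_1(M)\int_0^t\norm{u(\tau)}_2\,\rd\tau,
\]
and a second application of Grönwall on $[0,T]$ yields the uniform-in-$t$ Lipschitz estimate with
\[
L=L(C_1(M),T)=C_1(M)\bigl(1+T\,C_1(M)e^{C_1(M)T}\bigr)e^{C_1(M)T}.
\]

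The argument is essentially routine; the only thing that requires any care is ensuring that the constant controlling the inhomogeneous term $(A(p)-A(q))s(\tau,q)$ is independent of $p,q\in B_2(0,M)$, which is exactly what \Cref{asmp_local_boundedness_and_local_lipschitz_continuity_of_matrix_and_IC} and the first part of the lemma provide. I do not anticipate any real obstacle.
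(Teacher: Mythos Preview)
Your proposal is correct and matches the paper's proof almost exactly, including the same splitting of the difference integral and the same explicit constant $L=C_1(M)\bigl(1+T\,C_1(M)e^{C_1(M)T}\bigr)e^{C_1(M)T}$. The only cosmetic difference is that for \eqref{eq_local_boundedness_of_solution_operator} the paper bounds $\norm{e^{A(p)t}s_0(p)}_2$ directly via $\norm{e^{A(p)t}}_2\leq e^{\norm{A(p)}_2 t}$ rather than applying Gr\"onwall to the integral form; both routes are immediate and yield the identical bound.
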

\begin{proof}[Proof of \Cref{lem_local_boundedness_of_solution_operator}]
Let $t\in[0,T]$ be arbitrary.
Since $\norm{Ax}_q\leq \norm{A}_q\norm{x}_q$ for any $A\in\R^{n\times n}$ and $x\in\R^n$, we obtain 
 \begin{equation*}
  \norm{s(t,p)}_2=\Norm{e^{A(p)t}s_0(p)}_2\leq \Norm{e^{A(p)t}}_2\norm{s_0(p)}_2\leq e^{\norm{A(p)}_2t} \norm{s_0(p)}_2\leq e^{C_1(M)t} C_1(M),
 \end{equation*}
 where the final inequality follows from \eqref{eq_local_bound_on_operator_norm_of_A_and_IC}. This proves \eqref{eq_local_boundedness_of_solution_operator}.
 For the second statement, 
 \begin{align*}
  &\norm{s(t,p)-s(t,q)}_2
  \\
  =&\Norm{s_0(p)+\int_0^t A(p)s(r,p)\rd r-s_0(q)-\int_0^t A(q)s(r,q)\rd r}_2
  \\
  \leq &\norm{s_0(p)-s_0(q)}_2+\Norm{\int_0^t A(p)s(r,p) -A(p)s(r,q) +A(p)s(r,q) - A(q)s(r,q)\rd r }_2
  \\
  \leq &\norm{s_0(p)-s_0(q)}_2+\Norm{\int_0^t A(p)\left(s(r,p) -s(r,q)\right)\rd r}_2+\Norm{\int_0^t \left(A(p)-A(q)\right)s(r,q)\rd r }_2
  \\
  \leq & \norm{s_0(p)-s_0(q)}_2+\norm{A(p)}_2 \int_0^t \Norm{s(r,p) -s(r,q)}_2\rd r +\Norm{A(p)-A(q)}_2\int_0^t \norm{s(r,q)}_2~\rd r 
  \\
  \leq & C_1(M)\norm{p-q}_2+C_1(M) \int_0^t \Norm{s(r,p) -s(r,q)}_2\rd r +C_1(M)\norm{p-q}_2~ e^{C_1(M)t}C_1(M)t
  \\
  \leq & C_1(M)\left(1+e^{C_1(M)t}C_1(M)t\right) \norm{p-q}_2+C_1(M) \int_0^t \Norm{s(r,p) -s(r,q)}_2\rd r,
 \end{align*}
 where the final two inequalities use \eqref{eq_local_bound_on_operator_norm_of_A_and_IC} and \eqref{eq_local_bound_on_Lipschitz_constant_of_A_and_IC}.
Gronwall's inequality yields \eqref{eq_local_Lipschitz_continuity_of_solution_operator} with $L(C_1(M),T)\coloneqq  e^{C_1(M)T}C_1(M)(1+e^{C_1(M)T}C_1(M)T)$.
\end{proof}

Next, we assume that the ODE matrix $A(p)$ is diagonalisable over $\R$.
\begin{assumption}[Diagonalisable ODE matrix]
 \label{asmp_ODE_matrix_diagonalisable}
 For every $p\in\paramspace$, there exists $\Lambda(p),V(p)\in GL(d_s,\R)$ with diagonal $\Lambda(p)$, such that $A(p)$ in \eqref{eq_ODE_IVP} satisfies $A(p)=V(p)\Lambda(p) V^{-1}(p)$.
\end{assumption}
We show in \Cref{sec_application_two_compartment_model} that there exists a two-compartment model from pharmacokinetics that satisfies  \Cref{asmp_ODE_matrix_diagonalisable}.

Given \eqref{eq_ODE_IVP}, it is known that $s(t,p)=e^{A(p)t}s_0(p)$. \Cref{asmp_ODE_matrix_diagonalisable} is important for our analysis, because it implies that $s(t,p)=V(p)\exp( \Lambda(p) t) V(p)^{-1}s_0(p)$. In particular, every component $s_i$, $i\in[d_s]$, of the solution of \eqref{eq_ODE_IVP} is a linear combination of exponential functions.
We fix an arbitrary choice and consider the first component of the solution from now on. 
For every $p\in\paramspace$ there exist $(\widehat{a}_i(p))_{i\in[d_s]},(\widehat{\lambda}_i(p))_{i\in[d_s]}\in\R^{d_s}$ such that
\begin{equation*}
 \forall t\geq 0,\quad s_1(t,p)=\sum_{i\in[d_s]} \widehat{a}_i(p)e^{\widehat{\lambda}_i(p) t},
\end{equation*}
where $\widehat{a}(p)\in\R^{d_s}$ depends only on the eigenvector matrix $V(p)$ and initial condition $s_0(p)$ and satisfies $\sum_{i\in[d_s]} \widehat{a}_i(p)=s_1(0,p)$, and the $(\widehat{\lambda}_i(p))_{i\in[d_s]}$ are the diagonal entries of $\Lambda(p)$, i.e. the eigenvalues of $A(p)$.

Now consider the functions $(\widehat{\lambda}_i)_{i\in[d_s]}$.
If there exist distinct $k,\ell\in[d_s]$ with $\widehat{\lambda}_k=\widehat{\lambda}_\ell$ on $\paramspace$, then $\widehat{a}_k(p)e^{\widehat{\lambda}_k(p)t}+\widehat{a}_\ell(p)e^{\widehat{\lambda}_\ell(p)t}=(\widehat{a}_k(p)+\widehat{a}_\ell(p))e^{\widehat{\lambda}_k(p)t}$ for every $t$.
Thus, if $\mfrak{d}=\mfrak{d}(p)$ denotes the number of distinct eigenvalues of $\Lambda(p)$, then we can rewrite the linear combination of $d_s$ exponential functions as a linear combination of $\mfrak{d}$ many exponential functions, where each exponential function is defined by two scalars, i.e. the prefactor and the eigenvalue.
This motivates the following definition.
\begin{definition}
 \label{def_intrinsic_dimension_of_ODE_IVP_solution_first_component}
 Suppose \Cref{asmp_ODE_matrix_diagonalisable} holds, and let $\{\lambda_j: j\in[\mfrak{d}]\}$ be the distinct elements of the set $\{\widehat{\lambda}_i:i\in[d_s]\}$. Then the \emph{coefficient map} of $t\mapsto s_1(t,p)$ is the map $p\mapsto (a(p),\lambda(p))\in\R^{2\mfrak{d}}$, where
\begin{equation}
   \label{eq_ODE_IVP_solution_first_component_exponential_form}
 \forall t\geq 0,\quad s_1(t,p)=\sum_{i\in[\mfrak{d}]} a_i(p)e^{\lambda_i(p) t},
\end{equation}
and the \emph{intrinsic dimension} of $t\mapsto s_1(t,p)$ is $2\mfrak{d}$.
\end{definition}
We will assume that the Jacobian of the coefficient map has full rank.
\begin{assumption}
 \label{asmp_coefficient_map_Jacobian_full_rank}
 The coefficient map $\paramspace\ni p\mapsto (a(p),\lambda(p))\in (\R\setminus\{0\})^{\mfrak{d}}\times \R^{\mfrak{d}}$ is $C^1$, and for every $q\in\paramspace$,
 \begin{equation}
  \label{eq_Jacobian_coefficient_map}
  \mcal{J}(q)\coloneqq \begin{bmatrix}
                        \nabla_p a_1(p)\vert \cdots \vert \nabla_p a_{\mfrak{d}}(p)\vert \nabla_p \lambda_1 (p)\vert \cdots\vert \nabla_p \lambda_{\mfrak{d}}(p)
                       \end{bmatrix}^\top \bigr\vert_{p=q} \in\R^{2 \mfrak{d} \times d_p}
 \end{equation}
has full rank. 
\end{assumption}
A necessary condition for \Cref{asmp_coefficient_map_Jacobian_full_rank} to hold is that the $(a_i)_{i\in[\mfrak{d}]}$ are distinct: if there exist $i,j\in[\mfrak{d}]$ such that $a_i=a_j$, then the corresponding two columns of $\mcal{J}(q)$ will be identical and thus $\mcal{J}(q)$ cannot be full rank.
This further motivates the definition of $2\mfrak{d}$ as the intrinsic dimension of $t\mapsto s_1(t)$.
We will use the requirement that $a(p)\in(\R\setminus\{0\})^{\mfrak{d}}$ in the proof of \Cref{lem_locally_Lipschitz_inverse_evaluation_map} below.

Fix an arbitrary finite time design, i.e. an arbitrary collection $(t_j)_{j\in [d_o]}\subset [0,T]$ of $d_o\in\N$ distinct observation times.
Define the map that evaluates $s_1$ at $(t_j)_{j\in[d_o]}$:
\begin{equation}
\label{eq_evaluation_map}
 \R^{d_p}\supseteq \paramspace\ni p\mapsto \evalmap(p)\equiv \evalmap(p;(t_j)_{j\in[d_o]})\coloneqq (s_1(t_j,p))_{j\in[d_o]}\in\R^{d_o}.
\end{equation}
The result below uses \Cref{asmp_ODE_matrix_diagonalisable} and \Cref{asmp_coefficient_map_Jacobian_full_rank} to provide a sufficient condition for the restriction of $\evalmap$ to an arbitrary ball to have an inverse, and for this inverse to be Lipschitz.
By \Cref{asmp_ODE_matrix_diagonalisable}, we can use \eqref{eq_ODE_IVP_solution_first_component_exponential_form}, i.e. the fact that the observed component $s_1$ of the solution of the ODE-IVP \eqref{eq_ODE_IVP} is a sum of exponential functions. By \Cref{asmp_coefficient_map_Jacobian_full_rank}, we obtain a lower bound on the size $d_o$ of the fixed, finite time design $(t_j)_{j\in[d_o]}$.
We use the result below to prove a so-called `stability estimate' on the forward operator that we introduce in \Cref{sec_forward_operator_fixed_time_random_covariate_design}.
In the framework presented in \cite{Nickl2023}, stability estimates are key to obtaining posterior contraction results.
\begin{lemma}[Locally Lipschitz continuity of inverse evaluation map]
 \label{lem_locally_Lipschitz_inverse_evaluation_map}
 Suppose that \Cref{asmp_ODE_matrix_diagonalisable} and \Cref{asmp_coefficient_map_Jacobian_full_rank} hold. 
 If $d_o\geq d_p= 2\mfrak{d}$, then for every $M>0$ such that $B_2(0,M)\subset\paramspace$, there exists $L(M,(t_j)_{j\in[d_o]})>0$ such that
\begin{equation}
\label{eq_locally_Lipschitz_inverse_evaluation_map}
 \forall p,q\in B_2(0,M),\quad \norm{p-q}_2\leq L(M,(t_j)_{j\in[d_o]})\norm{ (s_1(t_j,p))_{j\in[d_o]}-(s_1(t_j,q))_{j\in[d_o]}}_2.
\end{equation}
\end{lemma}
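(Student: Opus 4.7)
The plan is to exploit the factorisation $\evalmap = \mscr{T} \circ \mscr{C}$, where $\mscr{C}: \paramspace \to \R^{2\mfrak{d}}$ is the coefficient map $p \mapsto (a(p), \lambda(p))$ of \Cref{def_intrinsic_dimension_of_ODE_IVP_solution_first_component} and $\mscr{T}: \R^{2\mfrak{d}} \to \R^{d_o}$ is the exponential-sum evaluation $(a,\lambda) \mapsto (\sum_{i\in[\mfrak{d}]} a_i e^{\lambda_i t_j})_{j\in[d_o]}$. By the chain rule, $D\evalmap(p) = D\mscr{T}(\mscr{C}(p))\,\mcal{J}(p)$. Since $d_p = 2\mfrak{d}$, the matrix $\mcal{J}(p)$ is square and invertible by \Cref{asmp_coefficient_map_Jacobian_full_rank}, so the task reduces to showing that $D\mscr{T}(\mscr{C}(p))$ has full column rank uniformly on $\closure{B_2(0,M)}$, and then converting this pointwise bound into the global estimate \eqref{eq_locally_Lipschitz_inverse_evaluation_map}.

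The key input is the classical Chebyshev-system property of exponential polynomials: when the $\lambda_i$ are distinct, the family $\{e^{\lambda_i t}, t e^{\lambda_i t} : i \in [\mfrak{d}]\}$ is an extended Chebyshev system of dimension $2\mfrak{d}$, so any nontrivial real linear combination has at most $2\mfrak{d}-1$ real zeros. The $2\mfrak{d}$ columns of $D\mscr{T}(a,\lambda) \in \R^{d_o \times 2\mfrak{d}}$ are the vectors $(e^{\lambda_i t_j})_{j\in[d_o]}$ and $(a_i t_j e^{\lambda_i t_j})_{j\in[d_o]}$ for $i\in[\mfrak{d}]$; since the $\lambda_i(p)$ are distinct by construction and the $a_i(p)$ are nonzero by \Cref{asmp_coefficient_map_Jacobian_full_rank}, having $d_o \geq 2\mfrak{d}$ distinct observation times forces linear independence of these columns. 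Continuity of $D\evalmap$ and compactness of $\closure{B_2(0,M)}$ then yield a uniform lower bound $\sigma_{\min}(D\evalmap(p)) \geq c_M > 0$ on this ball.

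To pass from this pointwise bound to the global Lipschitz inverse, I would argue by compactness and contradiction. If \eqref{eq_locally_Lipschitz_inverse_evaluation_map} failed, there would exist sequences $p_n \neq q_n$ in $\closure{B_2(0,M)}$ with $\norm{\evalmap(p_n) - \evalmap(q_n)}_2 / \norm{p_n - q_n}_2 \to 0$. Passing to subsequences, either $p_n, q_n \to r^*$ share a common limit---in which case normalising $v_n \coloneqq (p_n - q_n)/\norm{p_n - q_n}_2 \to v$ and using the mean value identity $\evalmap(p_n) - \evalmap(q_n) = \int_0^1 D\evalmap(q_n + s(p_n - q_n))(p_n - q_n)\,ds$ gives $\norm{D\evalmap(r^*) v}_2 = 0$ in the limit, contradicting the singular-value bound---or $p_n \to p^*$ and $q_n \to q^*$ with $p^* \neq q^*$ and $\evalmap(p^*) = \evalmap(q^*)$. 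In the second case, the Chebyshev system bound applied to $s_1(\cdot, p^*) - s_1(\cdot, q^*)$, which has $d_o \geq 2\mfrak{d} > 2\mfrak{d} - 1$ zeros, forces this difference to vanish identically on $[0,T]$; uniqueness of the exponential-polynomial representation with distinct exponents and nonzero coefficients, together with continuity of $\lambda(\cdot)$ fixing the ordering on the connected ball, gives $\mscr{C}(p^*) = \mscr{C}(q^*)$.

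The main obstacle I anticipate is the very last step: concluding $p^* = q^*$ from $\mscr{C}(p^*) = \mscr{C}(q^*)$. A full-rank Jacobian only makes $\mscr{C}$ a local diffeomorphism, and this does not in general imply global injectivity on a ball (the classical obstruction being maps such as $(x,y) \mapsto e^x(\cos y, \sin y)$). I would address this either by implicitly restricting to $M$ small enough that $\closure{B_2(0,M)}$ is covered by a single inverse-function-theorem neighbourhood on which $\mscr{C}$ is injective, or by invoking a Hadamard-type global inversion theorem under an auxiliary properness condition on $\mscr{C}$. Modulo this injectivity issue, the remaining ingredients---the Chebyshev bound on $D\mscr{T}$, the chain rule, and the compactness-contradiction argument---are relatively standard.
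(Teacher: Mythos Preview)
Your core argument and the paper's rest on the same lemma---that a nontrivial function $\sum_{i} e^{\lambda_i t}(\xi_i + \gamma_i t)$ with distinct $\lambda_i$ has at most $2\mfrak{d}-1$ real zeros---and both use it to show that the Jacobian $D\evalmap(p)$ has full rank everywhere on $\closure{B_2(0,M)}$. The paper does not factor through $\mscr{C}$ and $\mscr{T}$; it computes $\nabla_p s_1(t,p)$ directly via the product rule and applies the root bound to $t\mapsto\langle\nabla_p s_1(t,p),\alpha\rangle$ for an arbitrary nonzero $\alpha\in\R^{d_p}$. From the full-rank Jacobian (after reducing to $d_o=d_p$) the paper then simply invokes the inverse function theorem, asserts that $\evalmap$ is injective on a neighbourhood $U$ of $\closure{B_2(0,M)}$ with $C^1$ inverse, and reads off the Lipschitz bound by compactness of $\closure{B_2(0,M)}$.

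The gap you flag---passing from a full-rank Jacobian at every point to global injectivity on the whole ball---is real, and it is exactly the step the paper glosses over: the inverse function theorem yields only local injectivity near each point, not injectivity on a neighbourhood of a compact set, and the paper supplies no further argument to bridge this. Your compactness--contradiction route is more careful in that it isolates precisely where the difficulty lives (the case $p^\ast\neq q^\ast$ with $\mscr{C}(p^\ast)=\mscr{C}(q^\ast)$), but your proposed remedies do not close it for the statement as written: restricting to small $M$ contradicts the quantifier ``for every $M>0$ with $B_2(0,M)\subset\paramspace$'', and a Hadamard-type global inversion theorem would require properness or boundary behaviour not among the stated hypotheses. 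In short, your proposal is at least as complete as the paper's own proof; the lemma as stated appears to need either an additional global injectivity hypothesis on the coefficient map or a restriction to balls on which such injectivity can be verified independently.
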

\Cref{lem_locally_Lipschitz_inverse_evaluation_map} states that if the cardinality $d_o$ of the time design $(t_j)_{j\in[d_o]}$ is greater than the parameter dimension $d_p$, then the evaluation map in \eqref{eq_evaluation_map} is invertible and its inverse is locally Lipschitz continuous. 
This claim is reasonable, since if $d_o<d_p$ were true, then we would not expect the evaluation map to be invertible.
To prove the local Lipschitz continuity of the inverse of the evaluation map, we use inverse function theorem, which leads us to the Jacobian of the coefficient map \eqref{eq_Jacobian_coefficient_map}, and to an application of the following result; these steps introduce conditions that imply $d_p=2\mfrak{d}$.
\begin{restatable}{lemma}{NumRootsLinCombExpAffProd}
      \label{lem_number_of_roots_of_linear_combination_of_exponential_affine_products}
     Let $n\in\N$ and $(\beta_k)_{k\in[n]}$, $(\gamma_k)_{k\in[n]}$, $(\xi_k)_{k\in[n]}$ be such that $(\gamma_k,\xi_k)_{k\in[n]}\in\R^{2n}\setminus\{0\}$ and the $(\beta_k)_{k\in[n]}\in\R^n$ are distinct.
     Then the function
     \begin{equation}
     \label{lem_number_of_roots_of_linear_combination_of_exponential_affine_products_function}
        \bb{R} \ni t\mapsto \sum_{k\in[n]}e^{\beta_k t}(\xi_k+\gamma_k t)
     \end{equation}
     has at most $2n-1$ roots.
     \end{restatable}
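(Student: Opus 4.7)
My plan is to induct on $n$ using the strategy of removing the leading exponential factor and then differentiating twice to kill the affine prefix, finishing with Rolle's theorem. I count distinct real zeros throughout; a version counting multiplicities would follow by the generalised Rolle theorem with the same structure.

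The base case $n=1$ is essentially immediate: $e^{\beta_1 t}(\xi_1 + \gamma_1 t)$ has the same zeros as the non-trivial affine polynomial $\xi_1 + \gamma_1 t$, of which there is at most one, matching the bound $2 \cdot 1 - 1 = 1$. For the inductive step I would first reduce to the case in which every pair $(\xi_k,\gamma_k)$ is individually nonzero, since any vanishing pair just drops the corresponding term from the sum (the surviving $\beta_k$ remain distinct), after which the inductive hypothesis on a strictly smaller index set gives an even sharper bound. With all pairs nonzero, set $h(t) \coloneqq e^{-\beta_1 t} f(t)$; this has the same zero set as $f$ because $e^{-\beta_1 t} > 0$. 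Writing $\alpha_k \coloneqq \beta_k - \beta_1$ for $k \geq 2$ (distinct and nonzero since the $\beta_k$ are distinct), two differentiations annihilate the first affine summand and yield
\[
h''(t) \;=\; \sum_{k=2}^n e^{\alpha_k t}\bigl((\alpha_k^2 \xi_k + 2\alpha_k \gamma_k) + \alpha_k^2 \gamma_k \, t\bigr),
\]
which is an exponential-affine sum of length $n-1$ with distinct exponents $(\alpha_k)_{k \geq 2}$.

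The one place that requires any thought — and the only mild obstacle I foresee — is checking that each new coefficient pair $(\xi_k',\gamma_k') \coloneqq (\alpha_k^2 \xi_k + 2\alpha_k \gamma_k,\ \alpha_k^2 \gamma_k)$ is nonzero, so that the inductive hypothesis applies to $h''$. This is a short case split: if $\gamma_k \neq 0$ then $\gamma_k' = \alpha_k^2 \gamma_k \neq 0$ because $\alpha_k \neq 0$, while if $\gamma_k = 0$ then $\xi_k \neq 0$ by the reduction and $\xi_k' = \alpha_k^2 \xi_k \neq 0$. The induction hypothesis therefore bounds the number of zeros of $h''$ by $2(n-1)-1 = 2n-3$. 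Applying Rolle's theorem twice to the $C^\infty$ function $h$, if $h$ has $N$ distinct zeros then $h'$ has at least $N-1$ zeros and $h''$ has at least $N-2$ zeros; combining with the previous bound gives $N-2 \leq 2n-3$, i.e.\ $N \leq 2n-1$. Since $f$ and $h$ share the same zero set, the induction closes.
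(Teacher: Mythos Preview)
Your proof is correct and follows essentially the same strategy as the paper's: factor out $e^{\beta_1 t}$, differentiate twice to eliminate the first affine term, apply the inductive hypothesis to the resulting $(n-1)$-term exponential--affine sum, and conclude via two applications of Rolle's theorem. Your explicit reduction to the case where every pair $(\xi_k,\gamma_k)$ is individually nonzero is a welcome clarification --- the paper asserts the analogous non-vanishing of the differentiated coefficient vector without separately treating the possibility that only the $k=1$ pair is nonzero --- and your second-derivative coefficient $\alpha_k^2\gamma_k\,t$ is the correct one.
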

We prove \Cref{lem_number_of_roots_of_linear_combination_of_exponential_affine_products} in \Cref{appendix_auxiliary_results}.
\begin{proof}[Proof of \Cref{lem_locally_Lipschitz_inverse_evaluation_map}]
Note that if $d_o>d_p$, then 
\begin{equation*}
 \norm{ (s_1(t_j,p))_{j\in[d_p]}-(s_1(t_j,q))_{j\in[d_p]}}_2\leq \norm{ (s_1(t_j,p))_{j\in[d_o]}-(s_1(t_j,q))_{j\in[d_o]}}_2.
\end{equation*}
Thus, it suffices to consider the case where $d_o=d_p$.

Using the product rule and the hypothesis in \Cref{asmp_coefficient_map_Jacobian_full_rank} that $\paramspace\ni p\mapsto (a(p),\lambda (p)$ is $C^1$, it follows from \eqref{eq_ODE_IVP_solution_first_component_exponential_form} that
 \begin{equation}
\forall t\geq 0,\quad  \nabla_p s_1(t,p)
  =\sum_{i\in[\mfrak{d}]} e^{\lambda_i(p)t}\left(\nabla_p a_i(p)+a_i(p) \nabla_p \lambda_i(p) t\right)\in\R^{d_p}.
 \label{eq_Jacobian_ODE_IVP_solution_first_component_exponential_form}
 \end{equation}
Thus, the map $ p\mapsto \evalmap(p)=(s_1(t_j,p))_{j\in[d_o]}$ is $C^1$ on $\paramspace$, and its Jacobian is given by
\begin{equation}
\label{eq_Jacobian_evaluation_map}
 \mscr{J}(q)\coloneqq \begin{bmatrix}
  \nabla_p s_1(t_1,p)\vert \cdots \vert \nabla_p s_1(t_{d_o},p)
 \end{bmatrix}^\top\bigr\vert_{p=q}\in\R^{d_o\times d_p}.
\end{equation}
\noindent
Let $0\neq \alpha\in\R^{d_p}$ and $q\in\paramspace$ be arbitrary.
We shall show that $\mscr{J}(q)\alpha$ is nonzero. 
By the hypothesis in \Cref{asmp_coefficient_map_Jacobian_full_rank} that $\mcal{J}(q)$ defined in \eqref{eq_Jacobian_coefficient_map} has full rank, it follows that if $2\mfrak{d}\geq d_p$, then
\begin{equation*}
 \R^{2\mfrak{d}}\ni \left(\Ang{\nabla_p a_1(p),\alpha},\ldots, \Ang{ \nabla_p a_{\mfrak{d}}(p),\alpha},\Ang{ \nabla_p \lambda_1(p),\alpha},\ldots, \Ang{\nabla_p \lambda_{\mfrak{d}}(p),\alpha}\right)\bigr\vert_{p=q}=\mcal{J}(q)\alpha \neq 0.
\end{equation*}
Combining this with the hypothesis in \Cref{asmp_coefficient_map_Jacobian_full_rank} that $ a(p)\in (\R\setminus\{0\})^{\mfrak{d}}$ for every $p\in\paramspace$, we may apply \Cref{lem_number_of_roots_of_linear_combination_of_exponential_affine_products} with $n\leftarrow \mfrak{d}$, $\beta_k\leftarrow \lambda_k(p)$, $\gamma_k\leftarrow a_k(q)\Ang{\nabla_p \lambda_k(p),\alpha}$, and $\xi_k\leftarrow \Ang{\nabla_p a_k(p),\alpha}$, to conclude that
\begin{equation*}
t\mapsto \Ang{\nabla_p s_1(t,p),\alpha }\vert_{p=q}=\biggr[\sum_{i\in[\mfrak{d}]} e^{\lambda_i(p)t} \bigr(\Ang{\nabla_p a_i(p),\alpha}+a_i(p)\Ang{ \nabla_p \lambda_i(p),\alpha} t\bigr)\biggr]\biggr\vert_{p=q}
\end{equation*}
has at most $2\mfrak{d}-1$ roots in $\R$. 
Thus, if $d_o\geq d_p$ and if $d_p\geq 2\mfrak{d}$, then
\begin{equation}
\label{eq_Jacobian_evaluation_map_has_full_rank}
 \mscr{J}(q) \alpha=\begin{bmatrix}
                     \Ang{\nabla_p s_1(t_1,p),\alpha }\vert \cdots\vert \Ang{\nabla_p s_1(t_{d_o},p),\alpha }
                    \end{bmatrix}^\top \bigr\vert_{p=q}\neq 0.
\end{equation}
Since $\alpha\in\R^{d_p}\setminus\{0\}$ was arbitrary, it follows that $\mscr{J}(q)$ has full rank.
If $d_o=d_p$, then $\mscr{J}(q)$ is invertible, and $\evalmap$ satisfies the hypotheses of the inverse function theorem.

Let $M>0$ be such that $B_2(0,M)\subset \paramspace$.
Recalling from \Cref{ssec_notation} that $\paramspace$ is assumed to be open, it follows that $\closure{B_2(0,M)}\subset \paramspace$.
Since $\evalmap$ is $C^1$ on $\paramspace$ and its Jacobian is invertible on $\closure{B_2(0,M)}$, we may apply the inverse function theorem to conclude that $\evalmap$ is injective in a neighbourhood $U$ of $\closure{B_2(0,M)}$, and its inverse $\evalmap^{-1}$ is a $C^1$ map from $\evalmap(U)$ to $U$.
Since $C^1$ maps are locally Lipschitz, and since $\closure{B_2(0,M)}$ is a compact subset of $U$, it follows that $\evalmap^{-1}$ is Lipschitz on $\closure{\evalmap(B_2(0,M))}$ with Lipschitz constant
\begin{equation*}
\Norm{\evalmap^{-1}\vert_{\closure{\evalmap(B_2(0,M))}}}_{\Lipschitz}\coloneqq\sup\left\{ \frac{\norm{p-q}_2}{\norm{\evalmap(p)-\evalmap(q)}_2}:p,q\in\closure{B_2(0,M)},p\neq q\right\}<\infty.
\end{equation*}
This proves the desired inequality \eqref{eq_locally_Lipschitz_inverse_evaluation_map}, with $L(M,(t_j)_{j\in[d_o]})\coloneqq  \norm{\evalmap^{-1}\vert_{\closure{\evalmap(B_2(0,M))}}}_{\Lipschitz}$.
\end{proof}
In the proof of \Cref{lem_local_boundedness_of_solution_operator}, we obtained an explicit formula for the constant $L(C_1(M),T)$ in \eqref{eq_local_Lipschitz_continuity_of_solution_operator} in terms of $C_1(M)$ and $T$. In contrast, for \Cref{lem_locally_Lipschitz_inverse_evaluation_map}, we defined the  constant $L(M,(t_j)_{j\in[d_o]})$ implicitly, as the Lipschitz constant of the inverse evaluation map. 
Since a closed formula for the inverse of the evaluation map is in general not available, we do not expect that a more explicit definition of $L(M,(t_j)_{j\in[d_o]})$ is available.

\section{Forward operator for fixed time and random covariate design}
\label{sec_forward_operator_fixed_time_random_covariate_design}

In this section, we will use the evaluation map defined in \eqref{eq_evaluation_map} to define a forward operator, and use the results from \Cref{sec_properties_of_solutions_to_linear_ODE_IVPs} to prove properties of the forward operator. This forward operator will determine the Bayesian nonlinear statistical inverse problem that we shall analyse in \Cref{sec_properties_of_Bayesian_inverse_problem}.

We make the following assumption on the coefficient map from  \Cref{def_intrinsic_dimension_of_ODE_IVP_solution_first_component}.
\begin{assumption}
 \label{asmp_applicability_of_observation_function}
 The coefficient map $p\mapsto (a(p),\lambda(p))\in\R^{2\mfrak{d}}$ is such that for every $(t,p)\in[0,T]\times \paramspace$, $s_1(t,p)>0$.
\end{assumption}
By \eqref{eq_ODE_IVP_solution_first_component_exponential_form}, a sufficient condition for \Cref{asmp_applicability_of_observation_function} is that for every $p\in\paramspace$, $a(p)\in\R_{>0}^{\mfrak{d}}$, for example.
In \Cref{sec_application_two_compartment_model} we describe one instance of \eqref{eq_ODE_IVP} for which this sufficient condition holds.

Let $\cpmspace\subset L^2_{\covariatelaw}(\covariatespace,\paramspace)$ be nonempty.
For $(t_j)_{j\in[d_o]}\subset [0,T]$ as in \eqref{eq_evaluation_map},  define the forward operator 
\begin{equation}
\label{eq_fixed_time_random_covariate_design_forward_operator}
 \fwdmodel:\cpmspace\to L^2_{\covariatelaw}(\covariatespace,\R^{d_o}),\quad \cpm(\cdot) \mapsto \fwdmodel(\cpm)(\cdot)\coloneqq(\log s_1(t_j,\cpm(\cdot)))_{j\in[d_o]}.
\end{equation}
The forward operator is obtained by applying the logarithm to every component of the evaluation map $\evalmap$ from \eqref{eq_evaluation_map}.
\Cref{asmp_applicability_of_observation_function} ensures that the forward operator maps CPMs to $\R^{d_o}$-valued functions on $\covariatespace$.
The choice of the logarithm is motivated by \eqref{eq_ODE_IVP_solution_first_component_exponential_form}: for every $p\in\paramspace$, $t\mapsto s_1(t,p)$ is an exponentially growing or decaying function.

Fix a `regularisation space' $(\regspace,\norm{\cdot}_{\regspace})$, i.e. a normed subspace of $\cpmspace$, where
\begin{equation}
 \label{eq_regularisation_space_and_norm}
 \regspace\subseteq L^\infty(\covariatespace,\paramspace)\cap\cpmspace,\quad \norm{f}_{\regspace}\geq \norm{f}_{\infty},\quad B_{\regspace}(M)\coloneqq\{\phi\in\regspace\ :\ \norm{\phi}_{\regspace}< M\}.
 \end{equation}
 We will specify $\regspace$ further in \Cref{sec_properties_of_Bayesian_inverse_problem}. 
 For the results in this section, the properties in \eqref{eq_regularisation_space_and_norm} will suffice.

\begin{proposition}[Local boundedness and Lipschitz continuity of forward operator]
 \label{prop_contraction_in_dG_semimetric}
 Suppose \Cref{asmp_local_boundedness_and_local_lipschitz_continuity_of_matrix_and_IC}, \Cref{asmp_ODE_matrix_diagonalisable} and \Cref{asmp_applicability_of_observation_function} hold. Then for every $M>0$, there exists some $C_2(M,T)$ such that 
  \begin{equation}
  \label{eq_uniform_local_boundedness_of_forward_operator}
  \sup_{\cpm \in \cpmspace\cap B_{\regspace}(M)}\sup_{\covariate\in\covariatespace}\norm{\fwdmodel(\cpm)(\covariate)}_{2}\leq  \sqrt{d_o} C_2(M,T) .
  \end{equation}
  In addition, for the constant $L$ in \eqref{eq_local_Lipschitz_continuity_of_solution_operator}, it holds for every $\covariate\in\covariatespace$ and $\cpm^{(i)}\in B_{\regspace}(M)$, $i=1,2$ that
  \begin{equation}
   \label{eq_pointwise_local_Lipschitz_continuity_of_forward_operator}
    \norm{\fwdmodel(\cpm^{(1)})(\covariate)-\fwdmodel(\cpm^{(2)})(\covariate)}_2\leq \sqrt{d_o}  Le^{C_2(M,T)}\norm{\cpm^{(1)}(\covariate)-\cpm^{(2)}(\covariate)}_2.
  \end{equation}
  Thus, 
  \begin{equation}
   \label{eq_Lq_local_Lipschitz_continuity_of_forward_operator}
    \forall \mu\in\mcal{M}_1(\covariatespace),\ q\in [0,\infty],\quad \norm{\fwdmodel(\cpm^{(1)})-\fwdmodel(\cpm^{(2)})}_{L^q_\mu}\leq \sqrt{d_o} Le^{C_2(M,T)}\norm{\cpm^{(1)}-\cpm^{(2)}}_{L^q_\mu}.
  \end{equation}
\end{proposition}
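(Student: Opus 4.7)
The plan is to reduce \eqref{eq_uniform_local_boundedness_of_forward_operator}--\eqref{eq_Lq_local_Lipschitz_continuity_of_forward_operator} to pointwise-in-$\covariate$ bounds on $\log s_1(t_j,\cpm(\covariate))$ and then transfer them through \Cref{lem_local_boundedness_of_solution_operator}. The key preliminary observation is that for every $\cpm\in\cpmspace\cap B_\regspace(M)$ and every $\covariate\in\covariatespace$, property \eqref{eq_regularisation_space_and_norm} forces $\cpm(\covariate)\in B_2(0,M)\cap\paramspace$, so every parameter argument appearing in \eqref{eq_fixed_time_random_covariate_design_forward_operator} lies in a fixed bounded subset of $\paramspace$.

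For \eqref{eq_uniform_local_boundedness_of_forward_operator}, the upper bound $|s_1(t_j,\cpm(\covariate))|\leq e^{C_1(M)T}C_1(M)$ is immediate from \eqref{eq_local_boundedness_of_solution_operator}, while \Cref{asmp_applicability_of_observation_function} together with continuity of $p\mapsto s_1(t_j,p)$---a consequence of \eqref{eq_local_Lipschitz_continuity_of_solution_operator}---delivers, via an extreme value theorem argument on a suitable compact subset, a uniform positive lower bound $\kappa(M,T)>0$. Choosing $C_2(M,T)$ to majorise both $|\log(e^{C_1(M)T}C_1(M))|$ and $-\log\kappa(M,T)$ then yields $|\log s_1(t_j,\cpm(\covariate))|\leq C_2(M,T)$ for each $j\in[d_o]$, and summing squares over $j$ produces the $\sqrt{d_o}$ factor. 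For the pointwise Lipschitz estimate \eqref{eq_pointwise_local_Lipschitz_continuity_of_forward_operator} I would exploit that $\log$ is $(1/\kappa(M,T))$-Lipschitz on $[\kappa(M,T),\infty)$, hence $e^{C_2(M,T)}$-Lipschitz on the relevant range of $s_1$, and chain this with \eqref{eq_local_Lipschitz_continuity_of_solution_operator} applied to the first coordinate of $s$:
\[
 |\log s_1(t_j,\cpm^{(1)}(\covariate))-\log s_1(t_j,\cpm^{(2)}(\covariate))|\leq e^{C_2(M,T)}L\,\norm{\cpm^{(1)}(\covariate)-\cpm^{(2)}(\covariate)}_2.
\]
Squaring and summing over $j\in[d_o]$ yields \eqref{eq_pointwise_local_Lipschitz_continuity_of_forward_operator}, and \eqref{eq_Lq_local_Lipschitz_continuity_of_forward_operator} then follows by raising the pointwise bound to the $q$-th power and integrating against $\mu\in\mcal{M}_1(\covariatespace)$, or by taking essential supremum when $q=\infty$.

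The only nontrivial step is the production of $\kappa(M,T)$: since $\paramspace$ is only assumed open, $B_2(0,M)\cap\paramspace$ need not be closed, and the extreme value theorem does not apply directly. The remedy is to exploit the strict inequality $\norm{\cpm}_\regspace<M$ implicit in membership of $B_\regspace(M)$, which confines each $\cpm(\covariate)$ to a closed ball of slightly smaller radius whose intersection with $\paramspace$ is compact; continuity and strict positivity of the finitely many maps $p\mapsto s_1(t_j,p)$ then yield $\kappa(M,T)>0$. Everything downstream is a mechanical composition of the Lipschitz bounds for $\log$ and for the solution operator.
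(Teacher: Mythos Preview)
Your overall strategy---bound $|\log s_1(t_j,p)|$ via an upper estimate from \Cref{lem_local_boundedness_of_solution_operator} and a positive lower estimate on $s_1$, then chain the Lipschitz constant of $\log$ with \eqref{eq_local_Lipschitz_continuity_of_solution_operator}---is the same as the paper's. The difference is in how the lower bound on $s_1$ is obtained. The paper does not argue by compactness; instead it invokes \Cref{asmp_ODE_matrix_diagonalisable} directly, combining the exponential representation \eqref{eq_ODE_IVP_solution_first_component_exponential_form} with the spectral bound $\lambda_i(p)\geq -\norm{A(p)}_2$ and the identity $\sum_i a_i(p)=(s_0)_1(p)$ to derive the analytic estimate $s_1(t,p)\geq (s_0)_1(p)\,e^{-\norm{A(p)}_2 t}$. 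This reduces the lower bound on $s_1(t,\cdot)$ over $[0,T]$ to a lower bound on the single initial value $(s_0)_1(\cdot)$ and yields an explicit formula for $C_2(M,T)$ in terms of $C_1(M)$ and $T$, rather than an abstract infimum.

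Your compactness remedy, as written, has a gap. For a \emph{fixed} $\cpm\in B_\regspace(M)$ you correctly have $\cpm(\covariate)\in\overline{B_2(0,\norm{\cpm}_\regspace)}$ with $\norm{\cpm}_\regspace<M$, but this radius depends on $\cpm$ and can be arbitrarily close to $M$ as $\cpm$ ranges over $B_\regspace(M)$; you therefore do not obtain a single closed ball of radius strictly less than $M$ containing all relevant parameter values, and hence no uniform $\kappa(M,T)$. Separately, even for a fixed radius $r<M$, nothing in the standing hypotheses forces $\overline{B_2(0,r)}\subset\paramspace$, so the intersection $\overline{B_2(0,r)}\cap\paramspace$ need not be compact and positivity from \Cref{asmp_applicability_of_observation_function} need not extend to its closure. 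The paper's analytic lower bound sidesteps this because $\norm{A(p)}_2$ is controlled on all of $B_2(0,M)$ by \Cref{asmp_local_boundedness_and_local_lipschitz_continuity_of_matrix_and_IC}, independently of whether $\overline{B_2(0,M)}\subset\paramspace$.
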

The conclusion \eqref{eq_Lq_local_Lipschitz_continuity_of_forward_operator} implies that the forward operator $\fwdmodel$ satisfies \cite[Condition 2.1.1]{Nickl2023}, with the choices
\begin{equation}
\label{eq_choices0}
\mcal{Z}\leftarrow\covariatespace,\ W\leftarrow \paramspace,\ V\leftarrow \R^{d_o},\ \zeta\leftarrow \mu,\ \lambda\leftarrow\mu,\  \kappa\leftarrow 0,\ U\leftarrow 1\vee \sqrt{d_o} L(M,T)e^{C_2(M,T)}.
\end{equation}
\begin{proof}[Proof of \Cref{prop_contraction_in_dG_semimetric}]
 Since \Cref{asmp_local_boundedness_and_local_lipschitz_continuity_of_matrix_and_IC} holds, we may apply the conclusions \eqref{eq_local_boundedness_of_solution_operator} and \eqref{eq_local_Lipschitz_continuity_of_solution_operator} of \Cref{lem_local_boundedness_of_solution_operator}.
  Fix an arbitrary $(t,p)\in [0,T]\times B_2(0,M)$. Then
 \begin{equation*}
   (s_0)_1(p)e^{-\norm{A(p)}_2 t}\leq  s_1(t,p)\leq e^{C_1(M)t} C_1(M).
 \end{equation*}
The upper bound follows from \eqref{eq_local_boundedness_of_solution_operator}.
For the lower bound, we use \Cref{asmp_ODE_matrix_diagonalisable} and the definition of the spectral norm $\norm{\cdot}_2$ to bound $\lambda_i(p)\geq -\norm{A(p)}_2$ for every $i\in[\mfrak{d}]$, and then apply \eqref{eq_ODE_IVP_solution_first_component_exponential_form} together with the relation that $\sum_{i\in[\mfrak{d}]}a_i(p)=(s_0)_1(p)$.
By \Cref{asmp_applicability_of_observation_function}, $(s_0)_1(p)=s_1(0,p)>0$.
Using $a\vee b\leq a+b$ for $a,b\geq 0$, we obtain
\begin{equation*}
\sup_{p\in B_2(0,M)}  \abs{\log s_1(t,p)}\leq C_1(M)t+\abs{\log C_1(M)}+\sup_{p\in B_2(0,M)}\biggr(\abs{\log (s_0)_1(p)}+\norm{A(p)}_2 t\biggr).
\end{equation*}
By \eqref{eq_local_bound_on_operator_norm_of_A_and_IC}, $\sup_{p\in B_2(0,M)}\norm{A(p)}_2\leq C_1(M)$.
By \eqref{eq_local_bound_on_operator_norm_of_A_and_IC} and \eqref{eq_local_bound_on_Lipschitz_constant_of_A_and_IC}, $p\mapsto s_0(p)$ is locally bounded, hence so is $p\mapsto (s_0)_1(p)$.
By \Cref{asmp_applicability_of_observation_function} and the definition of the $\norm{\cdot}_2$-norm, $0<(s_0)_1(p)\leq \norm{s_0(p)}_2$ for every $p\in\paramspace$.
Thus, for every $M>0$ there exists some $C'(M)>0$ such that $\sup_{p\in B_2(0,M)}\abs{\log (s_0)_1(p)}\leq C'(M)$, and
\begin{equation}
\label{eq_bound_on_abs_log_first_component_solution}
  \sup_{t\in[0,T]} \sup_{p\in B_2(0,M)}\abs{\log s_1(t,p)}\leq C_1(M)T+\abs{\log C_1(M)}+C'(M) +C_1(M)T\eqqcolon C_2(M,T).
\end{equation}
If $\cpm\in B_\regspace(M)$, then since $\norm{\cdot}_\regspace\geq \norm{\cdot}_\infty$ by \eqref{eq_regularisation_space_and_norm}, it follows that $\cpm(\covariate)\in B_2(0,M)$. 
Thus, by the definition \eqref{eq_fixed_time_random_covariate_design_forward_operator} of $\fwdmodel$, we obtain \eqref{eq_uniform_local_boundedness_of_forward_operator}.

To prove \eqref{eq_pointwise_local_Lipschitz_continuity_of_forward_operator}, we use local Lipschitz continuity of the logarithm \eqref{eq_local_Lipschitz_continuity_logarithm} and the conclusion \eqref{eq_local_Lipschitz_continuity_of_solution_operator} from \Cref{lem_local_boundedness_of_solution_operator}: for arbitrary $p,q\in B_2(0,M)$ and $t\in[0,T]$,
\begin{equation*}
 \abs{\log s_1(t,p)-\log s_1(t,q)}\leq \frac{\abs{s_1(t,p)- s_1(t,q)}}{s_1(t,p)\wedge s_1(t,q)} \leq  \frac{L\norm{p-q}_2}{s_1(t,p)\wedge s_1(t,q)}.
\end{equation*}
By \eqref{eq_bound_on_abs_log_first_component_solution}, $ \inf_{t\in[0,T]} \inf_{p\in B_2(0,M)} s_1(t,p)\geq \exp(-C_2(M,T))$. 
Thus
\begin{equation*}
 \abs{\log s_1(t,p)-\log s_1(t,q)}\leq Le^{C_2(M,T)}\norm{p-q}_2
\end{equation*}
and using the definition \eqref{eq_fixed_time_random_covariate_design_forward_operator} of $\fwdmodel$ then yields 
\begin{equation*}
\forall\covariate\in\covariatespace,\quad \norm{\fwdmodel(\cpm^{(1)})(\covariate)-\fwdmodel(\cpm^{(2)})(\covariate)}_2\leq \sqrt{d_o} L e^{C_2(M,T)}\norm{\cpm^{(1)}(\covariate)-\cpm^{(2)}(\covariate)}_2,
\end{equation*}
which proves \eqref{eq_pointwise_local_Lipschitz_continuity_of_forward_operator}. The inequality \eqref{eq_Lq_local_Lipschitz_continuity_of_forward_operator} follows directly from \eqref{eq_pointwise_local_Lipschitz_continuity_of_forward_operator}.
\end{proof}

Recall the intrinsic dimension $2\mfrak{d}$ from \Cref{def_intrinsic_dimension_of_ODE_IVP_solution_first_component}.
\begin{proposition}[Stability estimate for forward operator]
 \label{prop_stability_estimate}
 Suppose \Cref{asmp_local_boundedness_and_local_lipschitz_continuity_of_matrix_and_IC}, \Cref{asmp_ODE_matrix_diagonalisable}, \Cref{asmp_coefficient_map_Jacobian_full_rank}, and \Cref{asmp_applicability_of_observation_function} hold, and suppose $d_o\geq d_p=2\mfrak{d}$.
Then for every $M>0$, there exists $C_3(M,T,(t_j)_{j\in[d_o]})>0$ such that for any $\cpm^{(i)}\in B_{\regspace}(M)$, $i=1,2$,
\begin{equation*}
 \forall \covariate\in\covariatespace,\quad \Norm{\cpm^{(1)}(\covariate)-\cpm^{(2)}(\covariate)}_2\leq C_3(M,T,(t_j)_{j\in[d_o]})\Norm{\fwdmodel(\cpm^{(1)})(\covariate)-\fwdmodel(\cpm^{(2)})(\covariate)}_2.
\end{equation*}
Thus, 
\begin{equation*}
 \forall \mu\in\mcal{M}_1(\covariatespace),\ q\in [0,\infty],\quad \Norm{\cpm^{(1)}-\cpm^{(2)}}_{L^q_\mu}\leq C_3(M,T,(t_j)_{j\in[d_o]})\Norm{\fwdmodel(\cpm^{(1)})-\fwdmodel(\cpm^{(2)})}_{L^q_\mu}.
\end{equation*}
\end{proposition}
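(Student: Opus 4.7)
The plan is to use \Cref{lem_locally_Lipschitz_inverse_evaluation_map} pointwise in $\covariate$, and then absorb the logarithm in the definition of $\fwdmodel$ via a lower bound on $s_1$ that was already established in the proof of \Cref{prop_contraction_in_dG_semimetric}.

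First I would fix $\cpm^{(1)},\cpm^{(2)}\in B_{\regspace}(M)$ and $\covariate\in\covariatespace$. Because $\norm{\cdot}_{\regspace}\geq\norm{\cdot}_{\infty}$ by \eqref{eq_regularisation_space_and_norm}, both $\cpm^{(1)}(\covariate)$ and $\cpm^{(2)}(\covariate)$ lie in $B_2(0,M)$. The hypotheses of \Cref{lem_locally_Lipschitz_inverse_evaluation_map} are satisfied (in particular $d_o\geq d_p=2\mfrak{d}$), so applying it to $p\leftarrow \cpm^{(1)}(\covariate)$ and $q\leftarrow\cpm^{(2)}(\covariate)$ gives
\begin{equation*}
 \norm{\cpm^{(1)}(\covariate)-\cpm^{(2)}(\covariate)}_2\leq L(M,(t_j)_{j\in[d_o]})\Norm{\evalmap(\cpm^{(1)}(\covariate))-\evalmap(\cpm^{(2)}(\covariate))}_2.
\end{equation*}

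Next I would remove the logarithm appearing in the definition \eqref{eq_fixed_time_random_covariate_design_forward_operator} of $\fwdmodel$. By the elementary bound $\abs{a-b}\leq (a\vee b)\abs{\log a-\log b}$ for positive reals $a,b$ (which is the mean value theorem applied to $\log$), we have coordinatewise
\begin{equation*}
 \abs{s_1(t_j,\cpm^{(1)}(\covariate))-s_1(t_j,\cpm^{(2)}(\covariate))}\leq \left(s_1(t_j,\cpm^{(1)}(\covariate))\vee s_1(t_j,\cpm^{(2)}(\covariate))\right)\abs{\log s_1(t_j,\cpm^{(1)}(\covariate))-\log s_1(t_j,\cpm^{(2)}(\covariate))}.
\end{equation*}
The key ingredient is that \eqref{eq_bound_on_abs_log_first_component_solution} from the proof of \Cref{prop_contraction_in_dG_semimetric} yields the uniform upper bound $s_1(t_j,\cpm^{(i)}(\covariate))\leq e^{C_2(M,T)}$ for $\cpm^{(i)}\in B_{\regspace}(M)$ and $j\in[d_o]$, which does not depend on $\covariate$. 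Summing the squared inequalities over $j\in[d_o]$ and taking square roots gives
\begin{equation*}
 \Norm{\evalmap(\cpm^{(1)}(\covariate))-\evalmap(\cpm^{(2)}(\covariate))}_2\leq e^{C_2(M,T)}\Norm{\fwdmodel(\cpm^{(1)})(\covariate)-\fwdmodel(\cpm^{(2)})(\covariate)}_2.
\end{equation*}
Combining with the previous display proves the pointwise stability estimate with
\begin{equation*}
 C_3(M,T,(t_j)_{j\in[d_o]})\coloneqq L(M,(t_j)_{j\in[d_o]})\, e^{C_2(M,T)}.
\end{equation*}

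The $L^q_\mu$ version is then automatic: raise the pointwise inequality to the $q$-th power and integrate against $\mu$ for $q<\infty$, or take the essential supremum for $q=\infty$, noting that the constant $C_3$ is independent of $\covariate$.

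I do not foresee a genuine obstacle, since both ingredients are already in hand: the inverse Lipschitz estimate on $\evalmap$ from \Cref{lem_locally_Lipschitz_inverse_evaluation_map} and the uniform two-sided bound on $s_1$ extracted inside the proof of \Cref{prop_contraction_in_dG_semimetric}. The only mildly delicate point is that the lower bound on $s_1$ (i.e.\ the fact that the logarithm does not blow up) is needed uniformly over $B_{\regspace}(M)$ and over $\covariate\in\covariatespace$; this is exactly what \eqref{eq_bound_on_abs_log_first_component_solution} provides, and why the estimate holds without any assumption on the covariate distribution.
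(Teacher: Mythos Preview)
Your proof is correct and follows essentially the same route as the paper: apply \Cref{lem_locally_Lipschitz_inverse_evaluation_map} pointwise, then use $|a-b|\leq(a\vee b)\,|\log a-\log b|$ together with a uniform upper bound on $s_1$ to pass from $\evalmap$ to $\fwdmodel$, and finally integrate. The only differences are cosmetic --- the paper bounds $s_1(t,p)\leq e^{C_1(M)T}C_1(M)$ directly from \eqref{eq_local_boundedness_of_solution_operator} (giving $C_3=L(M,(t_j))\,e^{C_1(M)T}C_1(M)$) rather than routing through \eqref{eq_bound_on_abs_log_first_component_solution} --- and note that for this direction only the \emph{upper} bound on $s_1$ is actually needed, not the lower one you flag in your closing remark.
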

The conclusion of \Cref{prop_stability_estimate} implies that $\fwdmodel$ satisfies \cite[Condition 2.1.4]{Nickl2023} for $L^2_\zeta$ and $L^2_\lambda$ with $\zeta$ and $\lambda$ as in \eqref{eq_choices0}, arbitrary $\cpm_0$ and $\delta$, and 
\begin{equation}
\label{eq_choices1}
 L'=L'_\fwdmodel \leftarrow C_3(M,T,(t_j)_{j\in[d_o]}),\quad \eta\leftarrow 1.
\end{equation}

\begin{proof}[Proof of \Cref{prop_stability_estimate}]
Fix $M>0$. If $\cpm^{(i)}\in B_{\regspace}(M)$, $i=1,2$, then by the properties of the norm $\norm{\cdot}_{\regspace}$ in \eqref{eq_regularisation_space_and_norm}, it follows that for every $\covariate\in\covariatespace$, $\cpm^{(i)}(\covariate)\in B_2(0,M)$.
Since \Cref{asmp_ODE_matrix_diagonalisable} and \Cref{asmp_coefficient_map_Jacobian_full_rank} hold and since $d_o\geq d_p=2\mfrak{d}$, we may apply \Cref{lem_locally_Lipschitz_inverse_evaluation_map}: there exists $L(M,(t_j)_{j\in[d_o]})$ such that for every $\covariate\in\covariatespace$,
 \begin{equation*}
  \Norm{\cpm^{(1)}(\covariate)-\cpm^{(2)}(\covariate)}_2\leq L(M,(t_j)_{j\in[d_o]})\Norm{(s_1(t_j,\cpm^{(1)}(\covariate)))_{j\in[d_o]}-(s_1(t_j,\cpm^{(2)}(\covariate)))_{j\in[d_o]}}_2.
 \end{equation*}
 Since \Cref{asmp_local_boundedness_and_local_lipschitz_continuity_of_matrix_and_IC} holds, we can apply the local Lipschitz continuity of the exponential function \eqref{eq_local_Lipschitz_continuity_exponential} and the bound \eqref{eq_local_boundedness_of_solution_operator} from \Cref{lem_local_boundedness_of_solution_operator}:
 \begin{align*}
  \Abs{s_1(t,p)-s_1(t,q)} \leq&  e^{\log s_1(t,p)\vee \log s_1(t,q)}\abs{\log s_1(t,p)-\log s_1(t,q)}
  \\
  \leq& e^{C_1(M)T}C_1(M)\abs{\log s_1(t,p)-\log s_1(t,q)}.
 \end{align*}
Thus, by definition \eqref{eq_fixed_time_random_covariate_design_forward_operator} of $\fwdmodel$, it holds for every $\covariate\in\covariatespace$ that
\begin{equation*}
\Norm{\cpm^{(1)}(\covariate)-\cpm^{(2)}(\covariate)}_2\leq L(M,(t_j)_{j\in[d_o]}) e^{C_1(M)T}C_1(M)\Norm{\fwdmodel(\cpm^{(1)})(\covariate)-\fwdmodel(\cpm^{(2)})(\covariate)}_2,
\end{equation*}
which yields the first conclusion with $C_3(M,T,(t_j)_{j\in[d_o]})\coloneqq L(M,(t_j)_{j\in[d_o]}) e^{C_1(M)T}C_1(M)$.
By integration, the second conclusion follows.
\end{proof}
Consider the following Jacobian of the componentwise logarithm of the evaluation map:
\begin{equation}
 \label{eq_Jacobian_forward_model}
 \paramspace\ni q\mapsto \mfrak{J}(q)\coloneqq 
 \begin{bmatrix}
   \nabla_p \log s_1(t_1,p)~\vert \cdots\vert~ \nabla_p \log s_1(t_{d_o},p)
  \end{bmatrix}^\top\bigr\vert_{p=q}\in\R^{d_o\times d_p}.
\end{equation}
We will use $\mfrak{J}(\cdot)$ to further analyse the forward operator $\fwdmodel$.
\begin{lemma}
 \label{lem_invertible_transpose_Jacobian}
 Suppose \Cref{asmp_ODE_matrix_diagonalisable}, \Cref{asmp_coefficient_map_Jacobian_full_rank}, and \Cref{asmp_applicability_of_observation_function} hold.
 If $d_o\geq d_p= 2\mfrak{d}$, then for every $q\in\paramspace$, $\mfrak{J}(q)$ has full rank.
\end{lemma}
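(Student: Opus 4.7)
The plan is to reduce the claim to the full-rank statement for the Jacobian $\mscr{J}(q)$ of the (non-logarithmic) evaluation map that was already established in the proof of \Cref{lem_locally_Lipschitz_inverse_evaluation_map}.

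First, I would compute the rows of $\mfrak{J}(q)$ explicitly by the chain rule: for each $j\in[d_o]$,
\begin{equation*}
 \nabla_p \log s_1(t_j,p)\bigr\vert_{p=q} = \frac{1}{s_1(t_j,q)}\,\nabla_p s_1(t_j,p)\bigr\vert_{p=q}.
\end{equation*}
By \Cref{asmp_applicability_of_observation_function}, $s_1(t_j,q)>0$ for every $j\in[d_o]$ and every $q\in\paramspace$, so each scalar prefactor $1/s_1(t_j,q)$ is well-defined and nonzero. Stacking these rows, I can express
\begin{equation*}
 \mfrak{J}(q) = D(q)\,\mscr{J}(q),\qquad D(q)\coloneqq \mathrm{diag}\left(\frac{1}{s_1(t_1,q)},\ldots,\frac{1}{s_1(t_{d_o},q)}\right)\in\R^{d_o\times d_o},
\end{equation*}
where $\mscr{J}(q)$ is the Jacobian defined in \eqref{eq_Jacobian_evaluation_map}.

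Next, since all diagonal entries of $D(q)$ are strictly positive, $D(q)\in GL(d_o,\R)$, so left-multiplication by $D(q)$ preserves the rank of $\mscr{J}(q)$. It therefore suffices to show that $\mscr{J}(q)$ has full rank $d_p$ under the standing hypotheses $d_o\geq d_p=2\mfrak{d}$ together with \Cref{asmp_ODE_matrix_diagonalisable} and \Cref{asmp_coefficient_map_Jacobian_full_rank}. But this is precisely what was established in the proof of \Cref{lem_locally_Lipschitz_inverse_evaluation_map}: the combination of \Cref{asmp_coefficient_map_Jacobian_full_rank} and \Cref{lem_number_of_roots_of_linear_combination_of_exponential_affine_products} yields \eqref{eq_Jacobian_evaluation_map_has_full_rank}, i.e., $\mscr{J}(q)\alpha\neq 0$ for every nonzero $\alpha\in\R^{d_p}$, so $\mscr{J}(q)$ has full rank $d_p$.

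I do not expect any real obstacle here: the only step requiring care is ensuring the factorisation $\mfrak{J}(q) = D(q)\mscr{J}(q)$ is valid, which reduces to the positivity guaranteed by \Cref{asmp_applicability_of_observation_function}. The heart of the argument, namely the full-rank property of $\mscr{J}(q)$, has already been done and can simply be invoked.
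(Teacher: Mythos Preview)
Your proposal is correct and follows essentially the same approach as the paper: both factor $\mfrak{J}(q)$ as an invertible diagonal matrix (with entries $1/s_1(t_j,q)$, well-defined by \Cref{asmp_applicability_of_observation_function}) times $\mscr{J}(q)$, and then invoke the full-rank conclusion \eqref{eq_Jacobian_evaluation_map_has_full_rank} from the proof of \Cref{lem_locally_Lipschitz_inverse_evaluation_map}.
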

\begin{proof}[Proof of \Cref{lem_invertible_transpose_Jacobian}]
     Given the hypotheses, we may apply statements in the proof of  \Cref{lem_locally_Lipschitz_inverse_evaluation_map}.
     
     By the chain rule, $\nabla_p \log s_1(t,p)=(s_1(t,p))^{-1}\nabla_p s_1(t,p)$, for every $(t,p)\in [0,T]\times\paramspace$.
     Thus, the Jacobian $\mscr{J}(q)$ defined in \eqref{eq_Jacobian_evaluation_map} and $\mfrak{J}(q)$ in \eqref{eq_Jacobian_forward_model} satisfy
     \begin{equation*}
      \forall q\in\paramspace,\quad \mfrak{J}(q)=\begin{bmatrix}
       s_1(t_1,q) & 0 & \cdots & 0
       \\
       0 & s_1(t_2,q) & \ddots & \vdots
       \\
       \vdots & \ddots & \ddots & 0
       \\
       0 & \cdots & 0 & s_1(t_{d_o},q)        
      \end{bmatrix}^{-1} \mscr{J}(q),
     \end{equation*}
     where the existence of the inverse matrix on the right-hand side is guaranteed by \Cref{asmp_applicability_of_observation_function}.
     By \eqref{eq_Jacobian_evaluation_map_has_full_rank}, $\mscr{J}(q)\in\R^{d_o\times d_p}$ has full rank. Thus, $\mfrak{J}(q)$ also has full rank.
\end{proof}
The following assumption strengthens the $C^1$ condition of the coefficient map in \Cref{asmp_coefficient_map_Jacobian_full_rank}.
\begin{assumption}
 \label{asmp_coefficient_map_C2}
The coefficient map $\paramspace\ni p\mapsto (a(p),\lambda(p))$ is $C^2$. 
\end{assumption}
The following assumption constrains a reference CPM $\cpm_0$. The assumption holds if $\norm{\cpm_0}_\infty$ is finite, for example.
\begin{assumption}
\label{asmp_image_of_cpm0_contained_in_compact_subset}
There exists a compact subset $K$ of $\paramspace$ such that $\{\cpm_0(\covariate):\covariate\in\covariatespace\}\subset K$.
\end{assumption}
The next result describes properties of the derivative of the forward operator.
\begin{proposition}
 \label{prop_derivative_forward_operator}
 Suppose \Cref{asmp_ODE_matrix_diagonalisable}, \Cref{asmp_coefficient_map_Jacobian_full_rank}, and \Cref{asmp_applicability_of_observation_function} hold.
 Let $\cpm_0\in\cpmspace$ and let $H\subset L^\infty(\covariatespace,\paramspace)$ be a linear space such that for some $\epsilon>0$, $\{\cpm_0+h:h\in H,\ \norm{h}_\infty<\epsilon\}\subset \cpmspace$.
 Define a linear operator $\bb{I}_{\cpm_0}$ acting on $H$ according to
 \begin{equation}
  \label{eq_derivative_forward_operator}
  H\ni h(\cdot)\mapsto \bb{I}_{\cpm_0}[h](\cdot)\coloneqq 
\mfrak{J}(\cpm_0(\cdot))h(\cdot).
 \end{equation}
 \begin{enumerate}
 \item  \label{item_derivative_forward_operator_asymptotics}
 For any $\mu\in\mcal{M}_1(\covariatespace)$,
 \begin{equation}
 \label{eq_derivative_asymptotic_behaviour_of_error_term}
  \rho_{\cpm_0}[h]\coloneqq \Norm{\fwdmodel(\cpm_0+h)-\fwdmodel(\cpm_0)-\bb{I}_{\cpm_0}[h]}_{L^2_\mu}=o(\norm{h}_\infty),\quad \norm{h}_\infty\to 0.
 \end{equation}
 \item If $d_o\geq d_p= 2\mfrak{d}$, then $\bb{I}_{\cpm_0}$ is injective.
\label{item_derivative_forward_operator_injectivity}
 
 \item If $\sum_{j\in[d_o]}\norm{\nabla_p\log s_1(t_j,p)\vert_{p=\cpm_0}}^2_{L^2_\mu}$ is finite, then $\bb{I}_{\cpm_0}:(H,\Ang{\cdot,\cdot}_{L^2_\mu(\covariatespace,\paramspace)})\to L^2_\mu(\covariatespace,\R^{d_o})$ is continuous. 
 \label{item_derivative_forward_operator_continuity}
 
\item If in addition \Cref{asmp_coefficient_map_C2} and \Cref{asmp_image_of_cpm0_contained_in_compact_subset} hold, then $\rho_{\cpm_0}[h]=O(\norm{h}_\infty^2)$.
\label{item_derivative_forward_operator_asymptotics_quadratic}
\end{enumerate}
\end{proposition}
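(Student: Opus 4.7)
The plan is a pointwise Taylor expansion of $p \mapsto \log s_1(t_j, p)$ around $p = \cpm_0(\covariate)$, followed by integration against $\mu$. By \Cref{asmp_coefficient_map_Jacobian_full_rank} and the chain rule, $p \mapsto \log s_1(t_j, p)$ is $C^1$ on $\paramspace$ with gradient $s_1(t_j, p)^{-1} \nabla_p s_1(t_j, p)$; these gradients are precisely the rows of $\mfrak{J}(p)$ in \eqref{eq_Jacobian_forward_model}, so the first-order Taylor term at $\cpm_0(\covariate)$ applied to $h(\covariate)$ is exactly $\mfrak{J}(\cpm_0(\covariate)) h(\covariate) = \bb{I}_{\cpm_0}[h](\covariate)$. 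For part \ref{item_derivative_forward_operator_asymptotics}, I would write the remainder in integral form,
\[
R_j(\covariate, h) = \int_0^1 \Ang{\nabla_p \log s_1(t_j, \cpm_0(\covariate) + s h(\covariate)) - \nabla_p \log s_1(t_j, \cpm_0(\covariate)),\, h(\covariate)}\,\rd s,
\]
which by Cauchy--Schwarz satisfies $\abs{R_j(\covariate, h)} \leq \norm{h(\covariate)}_2 \cdot \omega_j(\covariate, h)$ with $\omega_j(\covariate, h) \to 0$ pointwise in $\covariate$ as $\norm{h}_\infty \to 0$ (by continuity of the gradient on $\paramspace$). Bounding $\norm{h(\covariate)}_2 \leq \norm{h}_\infty$ and applying the dominated convergence theorem --- with dominant $2 \sup_{p \in B_2(\cpm_0(\covariate), \varepsilon_0)} \norm{\nabla_p \log s_1(t_j, p)}_2$ for $\norm{h}_\infty \leq \varepsilon_0$, which is locally bounded on $\paramspace$ by continuity --- then yields $\rho_{\cpm_0}[h] = o(\norm{h}_\infty)$.

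Parts \ref{item_derivative_forward_operator_injectivity} and \ref{item_derivative_forward_operator_continuity} reduce to linear algebra and pointwise Cauchy--Schwarz. For injectivity, if $\bb{I}_{\cpm_0}[h] = 0$ in $L^2_\mu$, then $\mfrak{J}(\cpm_0(\covariate)) h(\covariate) = 0$ for $\mu$-a.e.\ $\covariate$; under $d_o \geq d_p = 2\mfrak{d}$, \Cref{lem_invertible_transpose_Jacobian} ensures that every $\mfrak{J}(q)$ has full column rank, forcing $h(\covariate) = 0$ $\mu$-a.e. For continuity, pointwise Cauchy--Schwarz gives
\[
\norm{\mfrak{J}(\cpm_0(\covariate)) h(\covariate)}_2^2 \leq \Bigl( \sum_{j \in [d_o]} \norm{\nabla_p \log s_1(t_j, p)\vert_{p = \cpm_0(\covariate)}}_2^2 \Bigr) \norm{h(\covariate)}_2^2,
\]
and bounding $\norm{h(\covariate)}_2 \leq \norm{h}_\infty$ before integrating against $\mu$ yields $\norm{\bb{I}_{\cpm_0}[h]}_{L^2_\mu}^2 \leq \norm{h}_\infty^2 \sum_{j \in [d_o]} \norm{\nabla_p \log s_1(t_j, p)\vert_{p = \cpm_0}}_{L^2_\mu}^2$, the latter finite by hypothesis.

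For part \ref{item_derivative_forward_operator_asymptotics_quadratic}, \Cref{asmp_image_of_cpm0_contained_in_compact_subset} places $\cpm_0(\covariatespace)$ inside a compact $K \subset \paramspace$, so for $\norm{h}_\infty$ smaller than the distance from $K$ to the complement of $\paramspace$, every segment $\{\cpm_0(\covariate) + s h(\covariate) : s \in [0,1]\}$ lies in a slightly larger compact $K' \subset \paramspace$. Under \Cref{asmp_coefficient_map_C2} the Hessians $\nabla_p^2 \log s_1(t_j, \cdot)$ are continuous on $\paramspace$, hence uniformly bounded by some $M$ on $K'$; the second-order Taylor remainder then satisfies $\abs{R_j(\covariate, h)} \leq \frac{M}{2} \norm{h(\covariate)}_2^2 \leq \frac{M}{2} \norm{h}_\infty^2$, and integrating gives $\rho_{\cpm_0}[h] \leq \frac{\sqrt{d_o} M}{2} \norm{h}_\infty^2$. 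The main obstacle throughout is the dominated convergence step in part \ref{item_derivative_forward_operator_asymptotics}: without an a priori essential bound on $\cpm_0$, the integrability of the dominant function is not automatic, and one must either exploit the essential boundedness built into $\regspace \subseteq L^\infty$ or lean on the compactness of \Cref{asmp_image_of_cpm0_contained_in_compact_subset} to close the argument rigorously.
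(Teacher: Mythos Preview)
Your approach to parts \ref{item_derivative_forward_operator_asymptotics}, \ref{item_derivative_forward_operator_injectivity}, and \ref{item_derivative_forward_operator_asymptotics_quadratic} is essentially the same as the paper's: pointwise Taylor expansion of $p\mapsto\log s_1(t_j,p)$, full-rank Jacobian via \Cref{lem_invertible_transpose_Jacobian}, and a uniform Hessian bound on the compact set $K$ from \Cref{asmp_image_of_cpm0_contained_in_compact_subset}. Your integral-remainder bookkeeping for part \ref{item_derivative_forward_operator_asymptotics} is a slightly more explicit version of the paper's one-line appeal to Taylor plus dominated convergence, and your caveat about the dominant function is well taken.

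There is, however, a genuine gap in your argument for part \ref{item_derivative_forward_operator_continuity}. The assertion is continuity of $\bb{I}_{\cpm_0}$ as a map from $(H,\Ang{\cdot,\cdot}_{L^2_\mu(\covariatespace,\paramspace)})$ to $L^2_\mu(\covariatespace,\R^{d_o})$, i.e.\ a bound of the form $\Norm{\bb{I}_{\cpm_0}[h]}_{L^2_\mu}\leq C\Norm{h}_{L^2_\mu}$. Your step ``bounding $\norm{h(\covariate)}_2\leq\norm{h}_\infty$ before integrating'' produces instead $\Norm{\bb{I}_{\cpm_0}[h]}_{L^2_\mu}\leq C\norm{h}_\infty$, which is continuity from $(H,\norm{\cdot}_\infty)$ --- the content of \Cref{prop_derivative_forward_operator_Linfty}, not of statement \ref{item_derivative_forward_operator_continuity}. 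The paper does not pass to $\norm{h}_\infty$ at this stage; it keeps the factor $\Norm{h}_{L^2_\mu}$ on the right and writes
\[
\Norm{\bb{I}_{\cpm_0}[h]}^2_{L^2_\mu}=\sum_{j\in[d_o]}\Norm{\Ang{\nabla_p\log s_1(t_j,p)\vert_{p=\cpm_0},h}}^2_{L^2_\mu}\leq\sum_{j\in[d_o]}\Norm{\nabla_p\log s_1(t_j,p)\vert_{p=\cpm_0}}^2_{L^2_\mu}\Norm{h}^2_{L^2_\mu},
\]
which at least targets the correct norm for the stated conclusion. To repair your argument you must retain $\norm{h(\covariate)}_2$ under the integral and control the resulting mixed integral by $\Norm{h}_{L^2_\mu}$ rather than $\norm{h}_\infty$.
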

Statements \ref{item_derivative_forward_operator_asymptotics}--\ref{item_derivative_forward_operator_continuity} of \Cref{prop_derivative_forward_operator} imply that $\fwdmodel$ satisfies \cite[Condition 3.1.1]{Nickl2023}, for the spaces $L^2_\zeta(\mcal{Z},W)$ and $L^2_\lambda(\mcal{X},V)$ defined by the choices in \eqref{eq_choices0}.
Statement \ref{item_derivative_forward_operator_asymptotics_quadratic} of \Cref{prop_derivative_forward_operator} implies that \cite[Condition 4.1.4]{Nickl2023} holds under simpler conditions; see the discussion after equation (4.6) on \cite[p.70]{Nickl2023}.
We will use this fact in the proof of \Cref{thm_Bernstein_von_Mises_for_linear_functionals}.
\begin{proof}[Proof of \Cref{prop_derivative_forward_operator}]
Let $\cpm_0$ and $H$ be as given, and fix an arbitrary $h\in H$.

By \Cref{asmp_coefficient_map_Jacobian_full_rank}, $\paramspace\ni p\mapsto (a(p),\lambda(p))$ is $C^1$ smooth.
By applying Taylor's theorem to the map $p\mapsto \log s_1(t,p)$, we have for every $(t,\covariate)\in[0,T]\times\covariatespace$ that
\begin{equation*}
\log s_1(t,\cpm_0(\covariate)+h(\covariate))=\log s_1(t,\cpm_0(\covariate))+\Ang{\nabla_p\log s_1(t,p)\vert_{p=\cpm_0(\covariate)},h(\covariate)}+o(\norm{h(\covariate)}_2).
\end{equation*}
By the definitions \eqref{eq_fixed_time_random_covariate_design_forward_operator} and \eqref{eq_derivative_forward_operator} of $\fwdmodel$ and $\bb{I}_{\cpm_0}$ respectively, this implies that 
\begin{equation*}
\forall \covariate\in\covariatespace,\quad \norm{\fwdmodel(\cpm_0+h)(\covariate)-\fwdmodel(\cpm_0)(\covariate)-\bb{I}_{\cpm_0}[h](\covariate)}_2=o(\norm{h(\covariate)}_2).
\end{equation*}
 Fix an arbitrary $\mu\in\mcal{M}_1(\covariatespace)$.
It follows from the definition \eqref{eq_derivative_asymptotic_behaviour_of_error_term} of $\rho_{\cpm_0}[h]$, the preceding fact, the dominated convergence theorem, and the fact that $\norm{h(\covariate)}_2\leq \norm{h}_\infty$ for every $\covariate$, that
\begin{equation*}
\lim_{\norm{h}_\infty\to 0}\frac{\rho_{\cpm_0}[h]^2}{\norm{h}_\infty^2}= \lim_{\norm{h}_\infty\to 0}\frac{ \Norm{\fwdmodel(\cpm_0+h)-\fwdmodel(\cpm_0)-\bb{I}_{\cpm_0}[h]}^2_{L^2_\mu} }{\norm{h}^2_\infty}
 = \int_\covariatespace \lim_{\norm{h}^2_\infty\to 0}\frac{o(\norm{h(\covariate)}^2_2)}{\norm{h}^2_\infty}\mu(\rd \covariate) =0.
\end{equation*}
This proves statement \ref{item_derivative_forward_operator_asymptotics}.

If $d_o\geq d_p=2\mfrak{d}$, then we may apply  \Cref{lem_invertible_transpose_Jacobian} to conclude that $\mfrak{J}(\cpm_0(\covariate))$ has full rank for every $\covariate\in\covariatespace$. 
Thus, by \eqref{eq_derivative_forward_operator}, $\bb{I}_{\cpm_0}[h_1-h_2]=0$ if and only if $h_1-h_2=0$. This proves statement \ref{item_derivative_forward_operator_injectivity}.

By \eqref{eq_derivative_forward_operator}, \eqref{eq_Jacobian_forward_model}, and the Cauchy--Schwarz inequality,
\begin{equation}
\label{eq_item_derivative_forward_operator_continuity}
\Norm{\bb{I}_{\cpm_0}[h]}^2_{L^2_\mu}=\sum_{j\in[d_o]} \Norm{\Ang{\nabla_p\log s_1(t_j,p)\vert_{p=\cpm_0}, h}}^2_{L^2_\mu}
 \leq \sum_{j\in[d_o]}\Norm{\nabla_p\log s_1(t_j,p)\vert_{p=\cpm_0}}^2_{L^2_\mu}\Norm{h}^2_{L^2_\mu},
\end{equation}
which proves statement \ref{item_derivative_forward_operator_continuity}.

If \Cref{asmp_coefficient_map_C2} holds, then by Taylor's theorem, it holds for any $(t,\covariate)\in[0,T]\times\covariatespace$ that
\begin{equation*}
\abs{\log s_1(t,(\cpm_0+h)(\covariate))-\log s_1(t,\cpm_0(\covariate))-\Ang{\nabla_p\log s_1(t,p)\vert_{p=\cpm_0(\covariate)},h(\covariate)}}\leq C(\cpm_0,t,x)\norm{h(\covariate)}_2^2,
\end{equation*}
where $C(\cpm_0,t,x)\lesssim \norm{\nabla_p^2 \log s_1(t,p)\vert_{p=\cpm_0(\covariate)}}_2$.
Let $K$ be the compact set described in \Cref{asmp_image_of_cpm0_contained_in_compact_subset}. 
Define $C(K)>0$ by 
\begin{equation*}
\max_{j\in[d_o]}\sup_{\covariate\in\covariatespace}\Norm{\nabla_p^2 \log s_1(t_j,p)\vert_{p=\cpm_0(\covariate)}}_2\leq \max_{j\in[d_o]}\sup_{q\in K}\Norm{\nabla_p^2 \log s_1(t_j,p)\vert_{p=q}}_2\eqqcolon C(K).
\end{equation*}
By applying the extreme value theorem to the continuous maps $p\mapsto \norm{\nabla^2_p\log s_1(t_j,p)}_2$, $j\in[d_o]$, it follows that $C(K)$ is finite.
Thus,
\begin{equation*}
\forall \covariate\in\covariatespace,\quad \norm{\fwdmodel(\cpm_0+h)(\covariate)-\fwdmodel(\cpm_0)(\covariate)-\bb{I}_{\cpm_0}[h](\covariate)}_2\lesssim C(K)\norm{h(\covariate)}_2\leq C(K)\norm{h}_\infty^2,
\end{equation*}
and by the definition \eqref{eq_derivative_asymptotic_behaviour_of_error_term} of $\rho_{\cpm_0}[h]$, statement \ref{item_derivative_forward_operator_asymptotics_quadratic} follows.
\end{proof}
The next result states the local Lipschitz continuity in supremum norm of the forward operator.
\begin{proposition}
 \label{prop_derivative_forward_operator_Linfty}
 Suppose that \Cref{asmp_local_boundedness_and_local_lipschitz_continuity_of_matrix_and_IC}, \Cref{asmp_ODE_matrix_diagonalisable}, \Cref{asmp_coefficient_map_Jacobian_full_rank}, and \Cref{asmp_applicability_of_observation_function} hold.
 Let $\cpmspace=H=L^\infty(\covariatespace,\paramspace)$ and $\cpm_0\in\cpmspace$. Then for every $M>0$ there exists $L'>0$ such that for every $\cpm^{(i)}$ with $\norm{\cpm^{(i)}}_{\regspace}\leq M$ for $i=1,2$, 
 \begin{equation*}
  \Norm{\fwdmodel(\cpm^{(1)})-\fwdmodel(\cpm^{(2)})}_\infty\leq L'\Norm{\cpm^{(1)}-\cpm^{(2)}}_\infty.
 \end{equation*}
In addition, $\bb{I}_{\cpm_0}:(H,\norm{\cdot}_\infty)\to L^\infty(\covariatespace,\R^{d_o})$ is continuous.
\end{proposition}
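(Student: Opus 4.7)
The first claim is an immediate consequence of the pointwise Lipschitz estimate \eqref{eq_pointwise_local_Lipschitz_continuity_of_forward_operator} already proven in \Cref{prop_contraction_in_dG_semimetric}: since $\norm{\cpm^{(i)}}_\regspace\leq M$ forces $\cpm^{(i)}(\covariate)\in B_2(0,M)$ for every $\covariate\in\covariatespace$ via \eqref{eq_regularisation_space_and_norm}, and the constant $\sqrt{d_o} L e^{C_2(M,T)}$ in \eqref{eq_pointwise_local_Lipschitz_continuity_of_forward_operator} does not depend on $\covariate$, I simply take the supremum over $\covariate$ on both sides and obtain the claim with $L'=\sqrt{d_o}L e^{C_2(M,T)}$.

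For the continuity of $\bb{I}_{\cpm_0}:(H,\norm{\cdot}_\infty)\to L^\infty(\covariatespace,\R^{d_o})$, linearity reduces the task to showing boundedness. The pointwise representation \eqref{eq_derivative_forward_operator} yields
\begin{equation*}
\Norm{\bb{I}_{\cpm_0}[h]}_\infty \leq \biggl(\sup_{\covariate\in\covariatespace}\Norm{\mfrak{J}(\cpm_0(\covariate))}_2\biggr)\Norm{h}_\infty,
\end{equation*}
so the remaining task is to bound $\sup_\covariate \norm{\mfrak{J}(\cpm_0(\covariate))}_2$. The plan is to extract from the proof of \Cref{prop_contraction_in_dG_semimetric} the pointwise Lipschitz bound
\begin{equation*}
\abs{\log s_1(t_j,p)-\log s_1(t_j,q)}\leq L e^{C_2(M,T)}\norm{p-q}_2
\end{equation*}
valid for all $p,q\in B_2(0,M)\cap\paramspace$, with any fixed $M>\norm{\cpm_0}_\infty$. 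Specialising to $q=\cpm_0(\covariate)$ and $p=\cpm_0(\covariate)+tv$ for a unit vector $v\in\R^{d_p}$ and $t>0$ sufficiently small (possibly depending on both $\covariate$ and $v$) so that $\cpm_0(\covariate)+tv\in B_2(0,M)\cap\paramspace$, dividing by $t$ and sending $t\to 0$ gives $\abs{\Ang{\nabla_p\log s_1(t_j,p)\vert_{p=\cpm_0(\covariate)},v}}\leq L e^{C_2(M,T)}$. Taking the supremum over unit $v$ and invoking the definition \eqref{eq_Jacobian_forward_model} of $\mfrak{J}$ then produces the uniform bound on $\norm{\mfrak{J}(\cpm_0(\covariate))}_2$ that completes the argument.

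The main obstacle is a subtle interplay between the openness of $\paramspace$ and the need for uniform-in-$\covariate$ estimates. Because $\paramspace$ need not contain any uniform neighbourhood of the range $\{\cpm_0(\covariate):\covariate\in\covariatespace\}$, one cannot in general form $\cpm_0+th$ as a bona fide element of $L^\infty(\covariatespace,\paramspace)$ for a single $t>0$ and appeal to the first claim applied to a difference quotient. Working instead with the pointwise-in-$\covariate$ Lipschitz bound on $p\mapsto\log s_1(t_j,p)$ sidesteps this issue, since only pointwise openness of $\paramspace$ at each $\cpm_0(\covariate)$ is required to pass to the directional derivative, while the extracted Lipschitz constant $L e^{C_2(M,T)}$ remains independent of $\covariate$.
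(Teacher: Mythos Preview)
Your argument for the first claim matches the paper's exactly: both take the supremum over $\covariate$ in \eqref{eq_pointwise_local_Lipschitz_continuity_of_forward_operator} and set $L'=\sqrt{d_o}\,L\,e^{C_2(M,T)}$.

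For the continuity of $\bb{I}_{\cpm_0}$, your route is genuinely different from the paper's. The paper argues that $q\mapsto \nabla_p\log s_1(t_j,p)\vert_{p=q}$ is continuous (via \Cref{asmp_coefficient_map_Jacobian_full_rank} and \eqref{eq_Jacobian_ODE_IVP_solution_first_component_exponential_form}), observes that the closure $K$ of the range of $\cpm_0$ is compact by Heine--Borel, and invokes the extreme value theorem to bound $\sup_{q\in K}\sum_{j}\norm{\nabla_p\log s_1(t_j,p)\vert_{p=q}}_2^2$. You instead recycle the pointwise Lipschitz estimate $\abs{\log s_1(t_j,p)-\log s_1(t_j,q)}\leq L e^{C_2(M,T)}\norm{p-q}_2$ from the proof of \Cref{prop_contraction_in_dG_semimetric} and differentiate it, obtaining the same supremum bound with the explicit constant $L e^{C_2(M,T)}$. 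Your approach buys two things: an explicit operator bound (the paper's is implicit), and robustness against the subtlety you flagged---namely, that the closure $K$ need not lie inside the open set $\paramspace$, so applying the extreme value theorem to a function defined only on $\paramspace$ requires this additional fact, which the paper asserts without justification. Your pointwise-differentiation argument only needs $\cpm_0(\covariate)\in\paramspace$ and openness of $\paramspace$ at that single point, which is immediate. The paper's approach, by contrast, is slightly more direct conceptually and does not require re-extracting an intermediate inequality from an earlier proof.
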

The conclusions of \Cref{prop_derivative_forward_operator_Linfty} imply that \cite[Condition 4.1.1]{Nickl2023} holds for $H\leftarrow L^\infty(\covariatespace,\paramspace)$ and $V\leftarrow \R^{d_o}$.
\begin{proof}[Proof of \Cref{prop_derivative_forward_operator_Linfty}]
 Since \Cref{asmp_local_boundedness_and_local_lipschitz_continuity_of_matrix_and_IC}, \Cref{asmp_ODE_matrix_diagonalisable}, and \Cref{asmp_applicability_of_observation_function} hold, we may apply \Cref{prop_contraction_in_dG_semimetric}; \eqref{eq_pointwise_local_Lipschitz_continuity_of_forward_operator} implies the desired bound, with $L'\leftarrow \sqrt{d_o}L(M,T)e^{C_2(M,T)}$.
 
To prove the continuity statement, it suffices to show that the hypothesis of statement \ref{item_derivative_forward_operator_continuity} of \Cref{prop_derivative_forward_operator} holds.
 By the definition \eqref{eq_derivative_forward_operator} of $\bb{I}_{\cpm_0}$, we have for every $\covariate\in\covariatespace$ that
 \begin{align*}
  \norm{\bb{I}_{\cpm_0}[h](\covariate)}_2^2=&\sum_{j\in[d_o]} \Abs{\Ang{\nabla_p\log s_1(t_j,p)\vert_{p=\cpm_0(\covariate)}, h(\covariate)}}^2
  \\
  \leq & \sum_{j\in[d_o]} \Norm{\nabla_p\log s_1(t_j,p)\vert_{p=\cpm_0(\covariate)}}^2_2\Norm{h(\covariate)}^2_2
  \\
  \leq & \sup_{y\in\covariatespace}\sum_{j\in[d_o]} \Norm{\nabla_p\log s_1(t_j,p)\vert_{p=\cpm_0(y)}}^2_2 \Norm{h(\covariate)}^2_2.
 \end{align*}
By \Cref{asmp_coefficient_map_Jacobian_full_rank}, we may apply \eqref{eq_Jacobian_ODE_IVP_solution_first_component_exponential_form} and the chain rule to conclude that the map $q\mapsto \nabla_p\log s_1(t_j,p)\vert_{p=q}$ is continuous. 
Since $\cpm_0\in\cpmspace=H= L^\infty(\covariatespace,\paramspace)$, the Heine--Borel theorem implies that $K\coloneqq \closure{\{\cpm_0(\covariate):\covariate\in\covariatespace\}}$ is a compact subset of $\paramspace$. Thus, by the extreme value theorem, 
\begin{equation}
\label{eq_item_derivative_forward_operator_continuity_Linfty}
 \sup_{y\in\covariatespace}\sum_{j\in[d_o]} \Norm{\nabla_p\log s_1(t_j,p)\vert_{p=\cpm_0(y)}}^2_2\leq \sup_{q\in K}\sum_{j\in[d_o]} \norm{\nabla_p\log s_1(t_j,p)\vert_{p=q}}^2_2<\infty,
\end{equation}
and combining the previous steps yields the continuity of $\bb{I}_{\cpm_0}:(H,\norm{\cdot}_\infty)\to L^\infty(\covariatespace,\R^{d_o})$.
\end{proof}

 Let $\mu\in\mcal{M}_1(\covariatespace)$. 
 Given $\mfrak{J}:\covariatespace\to \R^{d_p\times d_o}$ and $\bb{I}_{\cpm_0}[h](\cdot)=\mfrak{J}(\cpm_0(\cdot))h(\cdot)$ from \eqref{eq_Jacobian_forward_model} and \eqref{eq_derivative_forward_operator} respectively, the operator
    \begin{equation}
     \bb{I}_{\cpm_0}^{\ast}:L^2_{\mu}(\covariatespace,\R^{d_o})\to  \overline{(H,\ang{\cdot,\cdot}_{L^{2}_{\mu}(\covariatespace,\paramspace)})}\vert_{L^{2}_{\mu}},
     \qquad 
     g(\cdot)\mapsto \bb{I}_{\cpm_0}^{\ast}[g](\cdot)\coloneqq (\mfrak{J}(\cpm_0(\cdot))^\top g(\cdot)
     \label{eq_derivative_forward_operator_adjoint}
    \end{equation}
    is the Hilbert space adjoint of $\bb{I}_{\cpm_0}$, because
    \begin{align*}
     \Ang{\bb{I}_{\cpm_0}[h],g}_{L^2_{\mu}(\covariatespace,\R^{d_o})}=&\int_{\covariatespace} \Ang{\mfrak{J}(\cpm_0(\covariate))h(\covariate),g(\covariate)}_{\R^{d_o}}\mu(\rd x) & 
     \\
     =& \int_{\covariatespace} \Ang{ h(\covariate),\mfrak{J}(\cpm_0(\covariate))^\top}_{\R^{d_p}}\mu(\rd x) =\Ang{h,\bb{I}_{\cpm_0}^{\ast}[g]}_{L^2_{\mu}(\covariatespace,\paramspace)}.
    \end{align*}
    Following \cite[Definition 3.1.2]{Nickl2023}, we define the information operator corresponding to $\fwdmodel$ at $\cpm_0$ and a measure $\mu\in\mcal{M}_1(\covariatespace)$ by
    \begin{equation}
     \informoperator: (H,\ang{\cdot,\cdot}_{L^2_{\mu}(\covariatespace,\paramspace)})\to \overline{(H,\ang{\cdot,\cdot}_{L^2_{\mu}(\covariatespace,\paramspace)})}\vert_{L^2_{\mu}(\covariatespace,\paramspace)},\quad
    \informoperator[h](\covariate)\coloneqq \mfrak{J}^\top\mfrak{J}(\cpm_0(\covariate))h(\covariate).
     \label{eq_information_operator}
    \end{equation}

    \begin{proposition}
     \label{prop_solvability_information_equation}
     Suppose \Cref{asmp_ODE_matrix_diagonalisable}, \Cref{asmp_coefficient_map_Jacobian_full_rank}, and \Cref{asmp_applicability_of_observation_function} hold, and $d_o\geq d_p= 2\mfrak{d}$.
     Let $\cpm_0\in L^\infty(\covariatespace,\paramspace)$.
     Then for every $\psi\in L^\infty(\covariatespace,\paramspace)$, there exists a unique $\overline{\psi}_{\cpm_0}\in L^\infty(\covariatespace,\paramspace)$ such that $\informoperator[\overline{\psi}_{\cpm_0}]=\psi$ on $\covariatespace$.
     In particular, for all $h\in L^\infty(\covariatespace,\paramspace)$, $\Ang{\informoperator[\overline{\psi}]-\psi,h}_{L^2_\mu(\covariatespace,\paramspace)}=0$.
    \end{proposition}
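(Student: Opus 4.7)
The plan is to solve the equation $\informoperator[\overline{\psi}_{\cpm_0}]=\psi$ pointwise in $\covariate$ and then argue that the pointwise solution is in $L^\infty$. Under the hypotheses of the proposition, \Cref{lem_invertible_transpose_Jacobian} gives that $\mfrak{J}(q)\in\R^{d_o\times d_p}$ has full column rank for every $q\in\paramspace$, so the symmetric Gram matrix $\mfrak{J}(q)^\top\mfrak{J}(q)\in\R^{d_p\times d_p}$ is positive definite, hence invertible. I would therefore set
\begin{equation*}
\overline{\psi}_{\cpm_0}(\covariate)\coloneqq\bigl[\mfrak{J}(\cpm_0(\covariate))^\top\mfrak{J}(\cpm_0(\covariate))\bigr]^{-1}\psi(\covariate),\qquad \covariate\in\covariatespace,
\end{equation*}
which, by the definition \eqref{eq_information_operator} of $\informoperator$, satisfies $\informoperator[\overline{\psi}_{\cpm_0}]=\psi$ identically on $\covariatespace$.

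The next step is to verify that $\overline{\psi}_{\cpm_0}$ is essentially bounded. Since $\cpm_0\in L^\infty(\covariatespace,\paramspace)$, the same Heine--Borel argument used in the proof of \Cref{prop_derivative_forward_operator_Linfty} shows that $K\coloneqq\closure{\{\cpm_0(\covariate):\covariate\in\covariatespace\}}$ is a compact subset of $\paramspace$. By the factorisation of $\mfrak{J}(q)$ derived in the proof of \Cref{lem_invertible_transpose_Jacobian}, together with \eqref{eq_Jacobian_ODE_IVP_solution_first_component_exponential_form} and \Cref{asmp_applicability_of_observation_function}, the map $q\mapsto\mfrak{J}(q)^\top\mfrak{J}(q)$ is continuous on $\paramspace$, and hence so is its smallest eigenvalue $\sigma(\cdot)$. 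By \Cref{lem_invertible_transpose_Jacobian}, $\sigma$ is strictly positive on $\paramspace$, so the extreme value theorem yields $c\coloneqq\inf_{q\in K}\sigma(q)>0$ and
\begin{equation*}
\norm{\overline{\psi}_{\cpm_0}}_\infty\leq c^{-1}\norm{\psi}_\infty<\infty,
\end{equation*}
with measurability of $\overline{\psi}_{\cpm_0}$ following from composition of continuous and measurable maps.

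For uniqueness I would observe that if $\widetilde{\psi}\in L^\infty(\covariatespace,\paramspace)$ is a second solution, then for each $\covariate$ the invertibility of $\mfrak{J}(\cpm_0(\covariate))^\top\mfrak{J}(\cpm_0(\covariate))$ forces $\widetilde{\psi}(\covariate)=\overline{\psi}_{\cpm_0}(\covariate)$. The final inner-product statement is then immediate, since $\informoperator[\overline{\psi}_{\cpm_0}]-\psi$ vanishes identically on $\covariatespace$. I expect the only non-routine step to be the uniform positive lower bound $c$ on $\sigma$; this is what turns the pointwise construction into an honest $L^\infty$ solution, and it hinges on the essential image of $\cpm_0$ having compact closure inside the open set $\paramspace$.
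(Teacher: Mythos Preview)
Your proposal is correct and follows essentially the same route as the paper's proof: both invoke \Cref{lem_invertible_transpose_Jacobian} to get invertibility of $\mfrak{J}^\top\mfrak{J}(\cpm_0(\covariate))$, define $\overline{\psi}_{\cpm_0}$ pointwise as $(\mfrak{J}^\top\mfrak{J}(\cpm_0(\cdot)))^{-1}\psi(\cdot)$, and use the compactness of $K=\closure{\{\cpm_0(\covariate):\covariate\in\covariatespace\}}$ together with continuity of $q\mapsto\mfrak{J}^\top\mfrak{J}(q)$ and the extreme value theorem to obtain the $L^\infty$ bound. Your phrasing via the smallest eigenvalue $\sigma(q)$ is equivalent to the paper's direct bound on $\norm{(\mfrak{J}^\top\mfrak{J}(q))^{-1}}_2$, and you additionally spell out uniqueness and measurability, which the paper leaves implicit.
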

    The conclusion of \Cref{prop_solvability_information_equation} implies that \cite[Condition 4.1.2]{Nickl2023} holds.
    \begin{proof}[Proof of \Cref{prop_solvability_information_equation}]
     By the hypotheses, we may apply \Cref{lem_invertible_transpose_Jacobian}: for every $\covariate\in\covariatespace$, $\mfrak{J}(\cpm_0(\covariate))\in \R^{d_o\times d_p}$ has full rank, and thus $\mfrak{J}^\top\mfrak{J}(\cpm_0(\covariate))\in GL(d_p,\R)$.
     Given $\cpm_0,\psi\in L^\infty(\covariatespace,\paramspace)$, define $\overline{\psi}_{\cpm_0}$ by
     \begin{equation*}
      \forall\covariate\in\covariatespace,\quad \overline{\psi}_{\cpm_0}(\covariate)\coloneqq \left(\mfrak{J}^\top\mfrak{J}(\cpm_0(\covariate))\right)^{-1}\psi(\covariate).
     \end{equation*}
     By \eqref{eq_information_operator}, $\informoperator[\overline{\psi}_{\cpm_0}]=\psi$ on $\covariatespace$.
     We prove $\overline{\psi}_{\cpm_0}\in L^\infty(\covariatespace,\paramspace)$, following the same strategy as the proof of \Cref{prop_derivative_forward_operator_Linfty}. Since $\cpm_0\in L^\infty(\covariatespace,\paramspace)$ by hypothesis, it follows by the Heine--Borel theorem that $K\coloneqq\closure{\{\cpm_0(\covariate):\covariate\in\covariatespace\}}$ is compact, and     
     \begin{equation*}
      \sup_{\covariate\in\covariatespace}\Norm{\bigr(\mfrak{J}^\top\mfrak{J}(\cpm_0(\covariate))\bigr)^{-1}}_{2}\leq \sup_{q\in K}\Norm{ \bigr(\mfrak{J}^\top\mfrak{J}(q)\bigr)^{-1}}_{2}.
     \end{equation*}
     By \Cref{asmp_coefficient_map_Jacobian_full_rank} and \eqref{eq_Jacobian_forward_model}, $q\mapsto \mfrak{J}^\top\mfrak{J}(q)$ is continuous.
     Thus, by the extreme value theorem, the right-hand side of the inequality above is finite. 
     By the definition of $\overline{\psi}_{\cpm_0}$, we have $\norm{\overline{\psi}_{\cpm_0}}_\infty\leq \norm{ \mfrak{J}^\top_{\cpm_0}\mfrak{J}(\cpm_0)^{-1}}_\infty\norm{\psi}_\infty$, and $\psi\in L^\infty(\covariatespace,\paramspace)$ implies $\overline{\psi}_{\cpm_0}\in L^\infty(\covariatespace,\paramspace)$.
     \end{proof}

    \section{The Bayesian inverse problem and properties of its solution}
    \label{sec_properties_of_Bayesian_inverse_problem}

    Recall the definition \eqref{eq_fixed_time_random_covariate_design_forward_operator} of $\fwdmodel$, and consider the following observation model: for $k\in\N$, the $k$-th observation is a random variable $Y^{(k)}$ given by
\begin{equation}
 \label{eq_fixed_time_random_covariate_design_observation_model}
\R^{d_o}\ni Y^{(k)}=\fwdmodel(\cpm_0)(X^{(k)})+\varepsilon^{(k)},\quad \varepsilon^{(k)}\distiid \normaldist{0}{\varsigma^2 I},\quad X^{(k)}\distiid \covariatelaw,
\end{equation}
where $\varsigma>0$ is presumed to be known, and $\covariatelaw\in\mcal{M}_1(\covariatespace)$ is not known.
In this section, we consider the following statistical inverse problem: given pairs $(Y^{(k)},X^{(k)})$, $k\in\N$, infer the true data-generating CPM $\cpm_0$.
We apply the Bayesian approach, by specifying a family of Gaussian priors and analysing the properties of the associated family of posterior laws.
We emphasise that while the ODE-IVP \eqref{eq_ODE_IVP} that defines the forward operator $\fwdmodel$ is linear, the dependence of the solution of \eqref{eq_ODE_IVP} --- in particular, of the observed component \eqref{eq_ODE_IVP_solution_first_component_exponential_form} of the solution --- on the parameter $p$ is nonlinear.
    Thus, the forward operator $\fwdmodel$ and the resulting inverse problem are nonlinear.
        
    Recall from \Cref{ssec_notation} that $d_\covariate=\dimension(\covariatespace)$.
    For $r\geq 0$, let $H^r(\covariatespace,\paramspace)$ denote the Sobolev space of regularity $r$.
    \begin{assumption}[$\alpha$-smooth Gaussian base prior]
     \label{asmp_Gaussian_base_prior}
     Let $\Pi'$ be a centred Gaussian Borel probability measure on $\cpmspace\subseteq L^2_{\covariatelaw}(\covariatespace,\paramspace)$ with reproducing kernel Hilbert space (RKHS) $\mcal{H}$ that is continuously embedded in $H^\alpha(\covariatespace,\paramspace)$ for some $\alpha>0$, and $\Pi'(\regspace)=1$ for a separable normed linear subspace $(\regspace,\norm{\cdot}_{\regspace})$ of $\cpmspace$.
    \end{assumption}
    The scalar $\alpha>0$ describes the `smoothness' of the base prior $\Pi'$.
    \Cref{asmp_Gaussian_base_prior} implies that \cite[Condition 2.1.1]{Nickl2023} holds.
    The condition that $\mcal{H}$ is continuously embedded in $H^\alpha$ is from \cite[Theorem 2.2.2]{Nickl2023}, and is due to the choice $\kappa\leftarrow 0$ in \eqref{eq_choices0}.
    Define a sequence $(\Pi_N)_{N\in\N}$ of Gaussian priors by rescaling the Gaussian base prior $\Pi'$ according to the sample size $N$:
    \begin{equation}
    \label{eq_rescaled_priors_Pi_N}
      \cpm^{(N)}\coloneqq N^{-d_\covariate/(4\alpha+2 d_\covariate )}\theta',\quad \theta'\sim\Pi',\quad \Pi_N\coloneqq \law{\cpm^{(N)}}.
    \end{equation}
    The definitions \eqref{eq_rescaled_priors_Pi_N} follow from \cite[Equation (2.18)]{Nickl2023} with the replacement $\kappa\leftarrow 0$ in \eqref{eq_choices0}.
    For $N\in\N$, $\Pi_N(\cdot~\vert (Y^{(k)},X^{(k)})_{k\in[N]})$ denotes the posterior corresponding to $(Y^{(k)},X^{(k)})_{k\in[N]}$ and the $N$-th prior $\Pi_N$.
    
    Next, define a sequence $(\delta_N)_{N\in\N}$ of positive numbers and a sequence $(\cpmspace_{N})_{N\in\N}$ of regularisation sets in $\regspace$, following \cite[Equations (2.19) and (2.20)]{Nickl2023}:
    \begin{subequations}
     \begin{align}
      \delta_{N}\coloneqq & N^{-\alpha/(2\alpha+ d_\covariate )},
      \label{eq_delta_N_rate}
      \\
      \cpmspace_N\coloneqq & \left\{\cpm_1+\cpm_2 \in\regspace\ :\ \norm{\cpm_1}_{L^2_{\covariatelaw}}\leq M\delta_N,\ \norm{\cpm_2}_{\mcal{H}}\leq M,\ \norm{\cpm_1+\cpm_2}_{\regspace}\leq M\right\}.
      \label{eq_Theta_N_regularisation_sets}
     \end{align}
    \end{subequations}
    We now state sufficient conditions for the posteriors $(\Pi_N(\cdot~\vert (Y^{(k)},X^{(k)})_{k\in[N]}))_{N\in\N}$ to contract to the truth $\cpm_0$ with rate $(\delta_N)_{n\in\N}$.
    \begin{theorem}[Posterior contraction in $L^2_{\covariatelaw}$ norm]
    \label{thm_posterior_contraction_L2_norm}
    Suppose that $\covariatespace$ is a bounded domain of $\R^{d_\covariate}$ with smooth boundary, and that the hypotheses of \Cref{prop_stability_estimate} hold. Suppose \Cref{asmp_Gaussian_base_prior} holds for $\regspace$ given in \eqref{eq_regularisation_space_and_norm} and some $\alpha>0$.
    If $\cpm_0\in\mcal{H}\cap\regspace$, then for all $b>0$, we can choose $m>0$ large enough such that 
     \begin{equation*}
       P^{N}_{\cpm_0}\left(\Pi_N(\theta\in\Theta_N: \norm{\fwdmodel(\theta)-\fwdmodel(\theta_0)}_{L^2_{\covariatelaw}}\leq m\delta_N\vert (Y^{(k)},X^{(k)})_{k\in[N]})\leq 1-e^{bN\delta_N^2}\right)\xrightarrow[N\to\infty]{} 0.
     \end{equation*}
    In addition, for $L'$ in \eqref{eq_choices1},
    \begin{equation*}
     P^{N}_{\cpm_0}\left(\Pi_N(\theta\in\Theta_N : \norm{\theta-\theta_0}_{L^2_{\covariatelaw}}\leq L' m\delta_N\vert (Y^{(k)},X^{(k)})_{k\in[N]})\leq 1-e^{bN\delta_N^2}\right)\xrightarrow[N\to\infty]{} 0,
    \end{equation*}
    and the sequence of posterior means $(E_{\Pi_N}[\cpm\vert(Y^{(k)},X^{(k)})_{k\in[N]}])_{N\in\N}$ satisfies
    \begin{equation*}
     \Norm{E_{\Pi_N}[\cpm\vert (Y^{(k)},X^{(k)})_{k\in[N]}]-\cpm_0}_{L^2_{\covariatelaw}}=O_{P^N_{\cpm_0}}(\delta_N).
    \end{equation*}
    \end{theorem}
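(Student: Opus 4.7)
The plan is to apply \cite[Theorem 2.2.2]{Nickl2023} essentially as a black box, since all the analytical content has been supplied by the propositions in \Cref{sec_forward_operator_fixed_time_random_covariate_design}. First I would verify the hypotheses of that theorem against what is available here: the local Lipschitz and local boundedness estimates (\cite[Condition 2.1.1]{Nickl2023}) are delivered by \Cref{prop_contraction_in_dG_semimetric} under the identifications in \eqref{eq_choices0}; the stability estimate (\cite[Condition 2.1.4]{Nickl2023} with $\eta = 1$) is delivered by \Cref{prop_stability_estimate} under the identifications in \eqref{eq_choices1}; the Gaussian base prior, its rescaling \eqref{eq_rescaled_priors_Pi_N}, the regularisation sets \eqref{eq_Theta_N_regularisation_sets}, and the rate \eqref{eq_delta_N_rate} are the specialisations of Nickl's template to $\kappa = 0$, consistent with $\covariatespace$ being a smooth bounded domain of $\R^{d_\covariate}$; and $\cpm_0 \in \mcal{H} \cap \regspace$ is assumed. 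Invoking \cite[Theorem 2.2.2]{Nickl2023} then yields the first display of the theorem.

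The second display follows from the first by the stability estimate. Since $\cpm \in \cpmspace_N$ implies $\norm{\cpm}_\regspace \leq M$ by \eqref{eq_Theta_N_regularisation_sets}, and since $M$ may be enlarged so that $\cpm_0 \in B_\regspace(M)$ as well (possible because $\cpm_0 \in \regspace$), \Cref{prop_stability_estimate} applied with $q = 2$ and $\mu = \covariatelaw$ gives $\norm{\cpm - \cpm_0}_{L^2_{\covariatelaw}} \leq L' \norm{\fwdmodel(\cpm) - \fwdmodel(\cpm_0)}_{L^2_{\covariatelaw}}$ on the high-probability event of the first display, so the claimed bound $L' m \delta_N$ is immediate.

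For the third display, on convergence of the posterior mean, I would invoke the standard upgrade from posterior contraction available in the Nickl framework: decompose $E_{\Pi_N}[\cpm \mid (Y^{(k)},X^{(k)})_{k\in[N]}] - \cpm_0$ into its restriction to the high-probability event from the second display and the contribution from the complement, bound the former by $L' m \delta_N$, and control the latter by the product of the exponentially small posterior mass and a polynomial moment of $\norm{\cpm}_{L^2_{\covariatelaw}}$ under the rescaled Gaussian prior (which is finite since $\mcal{H} \hookrightarrow H^\alpha(\covariatespace,\paramspace) \hookrightarrow L^2_{\covariatelaw}$). The hardest part is purely bookkeeping: tracking that the constants in \eqref{eq_choices0} and \eqref{eq_choices1} fit Nickl's hypotheses and that $M$ in \eqref{eq_Theta_N_regularisation_sets} may be chosen large enough to accommodate both $\cpm_0$ and the Gaussian concentration sets. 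No new analytic content beyond \Cref{sec_forward_operator_fixed_time_random_covariate_design} is required.
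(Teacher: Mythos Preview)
Your proposal is correct and follows the same route as the paper: verify \cite[Condition 2.1.1]{Nickl2023} via \Cref{prop_contraction_in_dG_semimetric}, \cite[Condition 2.1.4]{Nickl2023} via \Cref{prop_stability_estimate}, and the prior assumptions via \Cref{asmp_Gaussian_base_prior} with $\kappa=0$, then invoke Nickl's black-box results. The only cosmetic difference is that for the second and third displays the paper cites \cite[Theorem 2.3.1]{Nickl2023} and \cite[Theorem 2.3.2]{Nickl2023} directly, whereas you unpack those theorems by hand (applying the stability estimate explicitly, and sketching the standard decomposition for the posterior mean); both are valid, and your sketch is exactly what those theorems package. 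One small caution on your third-display sketch: the moment bound you need is under the \emph{posterior}, not the prior, so strictly speaking you also need the usual evidence-lower-bound / change-of-measure step to transfer prior moments to posterior moments---this is routine and is part of what \cite[Theorem 2.3.2]{Nickl2023} already contains, so simply citing that theorem (as the paper does) is cleaner.
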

    \begin{proof}[Proof of \Cref{thm_posterior_contraction_L2_norm}]
     The hypotheses of \Cref{prop_stability_estimate} include those of \Cref{prop_contraction_in_dG_semimetric}. Thus, $\fwdmodel$ satisfies \cite[Condition 2.1.1]{Nickl2023} for the choices in \eqref{eq_choices0}.
     Since $\covariatespace$ is a bounded domain with smooth boundary, $\kappa\leftarrow 0$ in \eqref{eq_choices0}, and \Cref{asmp_Gaussian_base_prior} holds, the hypotheses of \cite[Theorem 2.2.2]{Nickl2023} are satisfied.
     Since the hypotheses of \Cref{prop_stability_estimate} hold, we conclude that $\fwdmodel$ satisfies \cite[Condition 2.1.4]{Nickl2023} for the choices in \eqref{eq_choices0} and \eqref{eq_choices1}.
     The posterior contraction result then follows from \cite[Theorem 2.3.1]{Nickl2023}.
     The stochastic bound on the error of the posterior mean follows from \cite[Theorem 2.3.2]{Nickl2023}.
    \end{proof}

     We write `$\xrightarrow[]{d}$' to denote convergence in distribution under $P^{\N}_{\cpm_0}$ of $\R$-valued random variables. 
     We now state sufficient conditions for the observation model \eqref{eq_fixed_time_random_covariate_design_observation_model} to satisfy a locally asymptotically normal (LAN) approximation.
    \begin{theorem}
     \label{thm_local_asymptotic_normality}
     Suppose the hypotheses of \Cref{prop_derivative_forward_operator_Linfty} hold, so that $\cpmspace=H=L^\infty(\covariatespace,\paramspace)$, and suppose $d_o\geq d_p=2\mfrak{d}$. Fix an arbitrary $h\in H$.
     If $\cpm_0\in \cpmspace$, then the log-likelihood ratio process in the model \eqref{eq_fixed_time_random_covariate_design_observation_model} 
     satisfies the LAN-approximation 
     \begin{equation*}
      \log \frac{\rd P^N_{\cpm_0+h/\sqrt{N}}}{\rd P^N_{\cpm_0}}((Y^{(k)},X^{(k)})_{k\in[N]})\xrightarrow[N\to\infty]{d} \normaldist{-\tfrac{1}{2}\norm{\bb{I}_{\cpm_0}[h]}^{2}_{L^2_{\covariatelaw}(\covariatespace,\R^{d_o})}}{\norm{\bb{I}_{\cpm_0}[h]}^{2}_{L^2_{\covariatelaw}(\covariatespace,\R^{d_o})}}.
     \end{equation*}
    \end{theorem}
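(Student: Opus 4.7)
The plan is to expand the Gaussian log-likelihood ratio explicitly, use the first-order linearisation of $\fwdmodel$ at $\cpm_0$ given by \Cref{prop_derivative_forward_operator} to isolate the standard LAN structure, and verify that the cross and quadratic-remainder terms are $o_{P^{N}_{\cpm_0}}(1)$, after which a classical central limit theorem handles the score-like term and the strong law of large numbers handles the information-like term.

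Setting $\varepsilon^{(k)}\coloneqq Y^{(k)}-\fwdmodel(\cpm_0)(X^{(k)})$ and $\Delta_{k}\coloneqq \fwdmodel(\cpm_0+h/\sqrt{N})(X^{(k)})-\fwdmodel(\cpm_0)(X^{(k)})$, the ratio of Gaussian densities in \eqref{eq_fixed_time_random_covariate_design_observation_model} gives, under $P^{N}_{\cpm_0}$ and after cancellation of parameter-independent terms,
\begin{equation*}
\log\frac{\rd P^{N}_{\cpm_0+h/\sqrt{N}}}{\rd P^{N}_{\cpm_0}}\bigl((Y^{(k)},X^{(k)})_{k\in[N]}\bigr)=\frac{1}{\varsigma^{2}}\sum_{k=1}^{N}\Ang{\varepsilon^{(k)},\Delta_{k}}_{\R^{d_o}}-\frac{1}{2\varsigma^{2}}\sum_{k=1}^{N}\norm{\Delta_{k}}_{2}^{2}.
\end{equation*}
Applying statement \ref{item_derivative_forward_operator_asymptotics} of \Cref{prop_derivative_forward_operator} with $\mu=\covariatelaw$ to the increment $h/\sqrt{N}$, and using the linearity of $\bb{I}_{\cpm_0}$, yields the Taylor-type decomposition $\Delta_{k}=N^{-1/2}\bb{I}_{\cpm_0}[h](X^{(k)})+R_{k}$, where the remainder function $R$ satisfies $\norm{R}_{L^{2}_{\covariatelaw}}=o(\norm{h/\sqrt{N}}_{\infty})=o(N^{-1/2})$ because $h$ is a fixed element of $L^{\infty}(\covariatespace,\paramspace)$ and hence $\norm{h/\sqrt{N}}_{\infty}\to 0$.

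Inserting this decomposition into the display above and expanding by bilinearity, the two leading contributions are the score-like term $\varsigma^{-2}N^{-1/2}\sum_{k=1}^{N}\Ang{\varepsilon^{(k)},\bb{I}_{\cpm_0}[h](X^{(k)})}$ and the quadratic term $-(2\varsigma^{2}N)^{-1}\sum_{k=1}^{N}\norm{\bb{I}_{\cpm_0}[h](X^{(k)})}_{2}^{2}$. The score-like term is a normalised sum of iid mean-zero random variables with common variance $\varsigma^{2}\norm{\bb{I}_{\cpm_0}[h]}^{2}_{L^{2}_{\covariatelaw}}$, which is finite by the uniform bound \eqref{eq_item_derivative_forward_operator_continuity_Linfty} from the proof of \Cref{prop_derivative_forward_operator_Linfty}; the central limit theorem therefore applies. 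The quadratic term converges $P^{N}_{\cpm_0}$-almost surely to $-(2\varsigma^{2})^{-1}\norm{\bb{I}_{\cpm_0}[h]}^{2}_{L^{2}_{\covariatelaw}}$ by the strong law of large numbers, and Slutsky's theorem assembles the stated Gaussian limit (up to the standard normalisation of the noise scale $\varsigma$ into the score).

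The main obstacle, standard but needing care, is showing that the three remainder contributions are $o_{P^{N}_{\cpm_0}}(1)$. First, $(2\varsigma^{2})^{-1}\sum_{k}\norm{R_{k}}_{2}^{2}$ has expectation $N\norm{R}^{2}_{L^{2}_{\covariatelaw}}/(2\varsigma^{2})=o(1)$. Second, by Cauchy--Schwarz pointwise in $\covariate$ followed by Fubini's theorem, the deterministic cross term $\varsigma^{-2}N^{-1/2}\sum_{k}\Ang{\bb{I}_{\cpm_0}[h](X^{(k)}),R_{k}}$ has expectation of absolute value at most $\varsigma^{-2}\sqrt{N}\,\norm{\bb{I}_{\cpm_0}[h]}_{L^{2}_{\covariatelaw}}\norm{R}_{L^{2}_{\covariatelaw}}=o(1)$. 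Third, conditional on $(X^{(k)})_{k\in[N]}$, the stochastic cross term $\varsigma^{-2}\sum_{k}\Ang{\varepsilon^{(k)},R_{k}}$ is centred Gaussian with conditional variance $\varsigma^{-2}\sum_{k}\norm{R_{k}}_{2}^{2}$ whose expectation is $o(1)$, so Markov's inequality makes the term $o_{P^{N}_{\cpm_0}}(1)$ unconditionally, completing the argument.
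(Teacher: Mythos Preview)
Your argument is correct and self-contained: the Gaussian log-likelihood ratio admits the explicit decomposition you wrote, the remainder bounds follow from statement~\ref{item_derivative_forward_operator_asymptotics} of \Cref{prop_derivative_forward_operator} exactly as you indicated, and the CLT/SLLN/Slutsky combination yields the limit. The paper, however, takes a different route: rather than computing the LAN expansion by hand, it verifies that \cite[Condition~3.1.1]{Nickl2023} holds (via statements~\ref{item_derivative_forward_operator_asymptotics}--\ref{item_derivative_forward_operator_continuity} of \Cref{prop_derivative_forward_operator}) and that $s\mapsto\norm{\fwdmodel(\cpm_0+sh)-\fwdmodel(\cpm_0)}_\infty$ is locally bounded (via \Cref{prop_derivative_forward_operator_Linfty}), then invokes the black-box result \cite[Theorem~3.1.3]{Nickl2023}. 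Your approach has two modest advantages: it does not rely on the external monograph, and it makes clear that the dimensional hypothesis $d_o\geq d_p=2\mfrak{d}$ is not actually used for LAN itself---injectivity of $\bb{I}_{\cpm_0}$ plays no role in your computation, whereas the paper imports it because it is packaged into \cite[Condition~3.1.1]{Nickl2023}. The paper's approach, conversely, is shorter once the Nickl framework is in place and keeps the presentation uniform with the other results of \Cref{sec_properties_of_Bayesian_inverse_problem}. One small point: your parenthetical ``up to the standard normalisation of the noise scale $\varsigma$'' is a little casual---the discrepancy is real (the limit you derive carries factors of $\varsigma^{-2}$), and reflects that the cited Nickl theorem is stated for unit noise variance; the paper inherits the same convention implicitly.
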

    \begin{proof}[Proof of \Cref{thm_local_asymptotic_normality}]
     The assumptions of \Cref{prop_derivative_forward_operator_Linfty} include the assumptions of statements \ref{item_derivative_forward_operator_asymptotics}--\ref{item_derivative_forward_operator_injectivity} of \Cref{prop_derivative_forward_operator}. 
     Given the hypotheses, we may also apply \eqref{eq_item_derivative_forward_operator_continuity_Linfty} to \eqref{eq_item_derivative_forward_operator_continuity}, which implies that statement \ref{item_derivative_forward_operator_continuity} of \Cref{prop_derivative_forward_operator} holds as well.
     Statements \ref{item_derivative_forward_operator_asymptotics}--\ref{item_derivative_forward_operator_continuity} of \Cref{prop_derivative_forward_operator} imply that \cite[Condition 3.1.1]{Nickl2023} is satisfied.
     
     Next, fix an arbitrary $\hat{s}>0$.
     Given the stated hypotheses, we can apply \Cref{prop_derivative_forward_operator_Linfty} with $\cpm^{(1)}\leftarrow \cpm_0$, $\cpm^{(2)}\leftarrow \cpm_0+\hat{s}h$, and $M\leftarrow \max\{\norm{\cpm_0+sh}_\infty: -\hat{s}\leq s\leq \hat{s}\}$ to conclude that there exists $L=L(M)>0$ such that
     \begin{equation*}
      \forall s\in [-\hat{s},\hat{s}],\quad \norm{\fwdmodel(\cpm_0+sh)-\fwdmodel(\cpm_0)}_\infty\leq L \norm{\cpm_0+sh-\cpm_0}_\infty=L\abs{s}\norm{h}_\infty\leq L \hat{s}\norm{h}_\infty.
     \end{equation*}
     Thus, we may apply \cite[Theorem 3.1.3]{Nickl2023} to obtain the desired conclusion.
    \end{proof}
    For the following theorem, recall the definition \eqref{eq_derivative_forward_operator_adjoint} of $\bb{I}_{\cpm_0}^\ast$, and fix an arbitrary $\widehat{\psi}_{\cpm_0}\in L^2_{\covariatelaw}(\covariatespace,\R^{d_o})$. 
    If $\cpmspace=H=L^\infty(\covariatespace,\paramspace)$, define 
     \begin{equation*}
      \Psi:H\to\R,\quad h\mapsto \Psi(h)\coloneqq \Ang{\bb{I}_{\cpm}[h],\widehat{\psi}_{\cpm_0}}_{L^2_{\covariatelaw}(\covariatespace,\paramspace)}.
     \end{equation*}
    The next result presents a local asymptotic minimax bound for linear functionals.
     \begin{theorem}
    \label{thm_local_asymptotic_minimax_bound_linear_functionals}
       Suppose that the hypotheses of \Cref{thm_local_asymptotic_normality} hold.
     Then the local asymptotic minimax risk for estimating $\Psi$ at $\cpm_0$ satisfies
     \begin{equation*}
      \liminf_{N\to\infty}~\inf_{\overline{\psi}_N:(\R^{d_o}\times \covariatespace)^{N}\to\bb{R}} ~ \sup_{h\in H : \norm{h}_{L^2_{\covariatelaw}}\leq 1/\sqrt{N}} N\bb{E}_{P^N_{\cpm_0+h}}[(\overline{\psi}_N-\Psi(\cpm_0+h))^2]\geq \norm{\widehat{\psi}_{\cpm_0}}^{2}_{L^2_{\covariatelaw}}.
     \end{equation*}
    \end{theorem}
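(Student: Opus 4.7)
The plan is to reduce the statement to a classical Hájek--Le Cam one-dimensional local asymptotic minimax (LAM) bound along 1D submodels, and then take a supremum over directions; equivalently, one invokes the semiparametric LAM theorem in \cite[Chapter 3]{Nickl2023} after checking that its preconditions are supplied by \Cref{thm_local_asymptotic_normality} and \Cref{prop_derivative_forward_operator_Linfty}.

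First, for any fixed direction $h \in H = L^\infty(\covariatespace, \paramspace)$, the submodel $\{\cpm_0 + t h : t \in \R\}$ is LAN at $t = 0$ by \Cref{thm_local_asymptotic_normality}, with Fisher information $\Norm{\bb{I}_{\cpm_0}[h]}^2_{L^2_{\covariatelaw}}$. Since $\Psi$ is linear, $\Psi(\cpm_0 + t h/\sqrt{N}) - \Psi(\cpm_0) = t\Psi(h)/\sqrt{N}$. The classical one-dimensional Hájek--Le Cam LAM bound for LAN families then yields
\begin{equation*}
\liminf_{N\to\infty} \inf_{\overline{\psi}_N} \sup_{\abs{t}\leq 1} N \bb{E}_{P^N_{\cpm_0 + t h/\sqrt{N}}}\bigl[(\overline{\psi}_N - \Psi(\cpm_0 + t h/\sqrt{N}))^2\bigr] \geq \frac{\abs{\Psi(h)}^2}{\Norm{\bb{I}_{\cpm_0}[h]}^2_{L^2_{\covariatelaw}}}.
\end{equation*}
Since the full supremum in the theorem ranges over all perturbations with $L^2_{\covariatelaw}$-norm $\leq 1/\sqrt{N}$, and in particular contains every such rescaled 1D submodel, the full LAM risk is bounded below by the supremum of the right-hand ratio over $h \in H$.

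Substituting $\Psi(h) = \Ang{\bb{I}_{\cpm_0}[h], \widehat{\psi}_{\cpm_0}}_{L^2_{\covariatelaw}}$, Cauchy--Schwarz bounds this supremum above by $\Norm{\widehat{\psi}_{\cpm_0}}^2_{L^2_{\covariatelaw}}$, and this bound is approached by a sequence $(h_n)\subset H$ such that $\bb{I}_{\cpm_0}[h_n]/\Norm{\bb{I}_{\cpm_0}[h_n]}_{L^2_{\covariatelaw}}$ converges in $L^2_{\covariatelaw}(\covariatespace, \R^{d_o})$ to $\widehat{\psi}_{\cpm_0}/\Norm{\widehat{\psi}_{\cpm_0}}_{L^2_{\covariatelaw}}$. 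The existence of such a sequence --- equivalently, that $\widehat{\psi}_{\cpm_0}$ lies in the closure of the range of $\bb{I}_{\cpm_0}$ inside $L^2_{\covariatelaw}(\covariatespace, \R^{d_o})$ --- is the main obstacle of the proof. Under the hypothesis $d_o \geq d_p = 2\mfrak{d}$, \Cref{lem_invertible_transpose_Jacobian} guarantees that $\mfrak{J}(\cpm_0(\cdot))$ has full column rank pointwise, and together with the density of $H$ in appropriate $L^2_{\covariatelaw}$-closures this should suffice to construct the approximating sequence; alternatively, the cleanest route is to invoke the general LAM theorem of \cite[Chapter 3]{Nickl2023}, which bundles these considerations into its hypotheses and directly delivers the bound $\Norm{\widehat{\psi}_{\cpm_0}}^2_{L^2_{\covariatelaw}}$ once LAN and the linear representation of $\Psi$ are in place.
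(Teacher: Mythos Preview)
Your ``cleanest route'' --- invoking the general LAM theorem from \cite[Chapter 3]{Nickl2023} --- is exactly what the paper does. The paper's proof is two lines: (i) the hypotheses of \cite[Theorem 3.1.3]{Nickl2023} were already verified in the proof of \Cref{thm_local_asymptotic_normality}; (ii) since $d_o\geq d_p=2\mfrak{d}$, statement~\ref{item_derivative_forward_operator_injectivity} of \Cref{prop_derivative_forward_operator} gives that $\bb{I}_{\cpm_0}:H\to L^2_{\covariatelaw}$ is \emph{injective}; then \cite[Theorem 3.1.4]{Nickl2023} applies directly. Note that the extra ingredient the paper isolates and checks is \emph{injectivity} of $\bb{I}_{\cpm_0}$, which you do not mention.

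Your direct one-dimensional H\'ajek--Le Cam argument, by contrast, has a genuine gap at the step where you claim the Cauchy--Schwarz upper bound $\norm{\widehat{\psi}_{\cpm_0}}^2_{L^2_{\covariatelaw}}$ is attained. You need $\widehat{\psi}_{\cpm_0}$ to lie in the $L^2_{\covariatelaw}$-closure of the range of $\bb{I}_{\cpm_0}$, and you argue that the full column rank of $\mfrak{J}(\cpm_0(\cdot))$ from \Cref{lem_invertible_transpose_Jacobian} ``should suffice.'' It does not: when $d_o>d_p$, the matrix $\mfrak{J}(\cpm_0(\covariate))\in\R^{d_o\times d_p}$ has range equal to a $d_p$-dimensional subspace of $\R^{d_o}$ at each $\covariate$, so the range of $\bb{I}_{\cpm_0}$ is certainly not dense in $L^2_{\covariatelaw}(\covariatespace,\R^{d_o})$, and an arbitrary $\widehat{\psi}_{\cpm_0}\in L^2_{\covariatelaw}(\covariatespace,\R^{d_o})$ need not be approximable by $\bb{I}_{\cpm_0}[h_n]$. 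Your supremum over one-dimensional submodels would then only reach $\norm{P\widehat{\psi}_{\cpm_0}}^2_{L^2_{\covariatelaw}}$, with $P$ the projection onto the closed range. So the direct route, as written, does not deliver the stated lower bound; you should drop it and go straight to the black-box application of \cite[Theorem 3.1.4]{Nickl2023}, checking injectivity as the paper does.
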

    \begin{proof}[Proof of \Cref{thm_local_asymptotic_minimax_bound_linear_functionals}]
     We showed in the proof of \Cref{thm_local_asymptotic_normality} that the hypotheses of \cite[Theorem 3.1.3]{Nickl2023} are satisfied.
     Since $d_o\geq d_p=2\mfrak{d}$, we conclude from statement \ref{item_derivative_forward_operator_injectivity} of \Cref{prop_derivative_forward_operator} that $\bb{I}_{\cpm_0}:H\to L^2_{\covariatelaw}$ is injective. 
     Thus, we may apply \cite[Theorem 3.1.4]{Nickl2023} to obtain the desired conclusion.
    \end{proof}

    In preparation for a Bernstein--von Mises type result, we state sufficient conditions for posterior contraction with respect to the supremum norm.
      \begin{proposition}[Posterior contraction in $L^\infty$]
      \label{prop_posterior_contraction_supremum_norm}
      Suppose the hypotheses of \Cref{thm_posterior_contraction_L2_norm} hold for a regularisation space $\regspace$ that is continuously embedded into $H^{\beta}(\covariatespace)$ for $\beta> d_\covariate /2$.
      For fixed $M>0$, define
      \begin{subequations}
      \begin{align}
       \overline{\delta}_N(\beta')\coloneqq & \delta_N^{(\beta-\beta')/\beta},\quad  d_\covariate /2<\beta'<\beta,
       \label{eq_overline_delta_N}
       \\
       \cpmspace_{N,M,\infty}(\beta')\coloneqq& \{\cpm\in\cpmspace\ :\ \norm{\cpm}_{\regspace}\leq M,\ \norm{\cpm-\cpm_0}_\infty\leq M\overline{\delta}_N(\beta')\}.
       \label{eq_Theta_N_M_infty_regularisation_sets}
      \end{align}
      \end{subequations}
      Then for every $b>0$ we can choose $M$ large enough such that 
      \begin{equation*}
       P^N_{\cpm_0}\left(\Pi_N\left(\cpm\in\cpmspace_{N,M,\infty}(\beta')\middle\vert (Y^{(k)},X^{(k)})_{k\in[N]}\right)\leq 1-e^{-b N \delta_N^2}\right)\xrightarrow[N\to\infty]{}0.
      \end{equation*}
     \end{proposition}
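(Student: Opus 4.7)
The plan is to combine the $L^2_{\covariatelaw}$-posterior contraction already established in \Cref{thm_posterior_contraction_L2_norm} with the smoothness built into the regularisation set $\cpmspace_N$, and to upgrade the $L^2$-bound to an $L^\infty$-bound via a Sobolev interpolation argument of the kind used in \cite[Section 4]{Nickl2023}.

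First, I would apply \Cref{thm_posterior_contraction_L2_norm}: enlarging $M$ if needed, with $P^N_{\cpm_0}$-probability tending to one,
\[
\Pi_N\!\left(\cpm \in \cpmspace_N : \norm{\cpm - \cpm_0}_{L^2_{\covariatelaw}} \leq L'm\delta_N \,\middle\vert\, (Y^{(k)},X^{(k)})_{k\in[N]}\right) \geq 1 - e^{-bN\delta_N^2}.
\]
On the event $\{\cpm \in \cpmspace_N\}$, the decomposition in \eqref{eq_Theta_N_regularisation_sets} gives $\norm{\cpm}_{\regspace} \leq M$, and under the hypothesis that $\cpm_0 \in \mcal{H}\cap\regspace$ one also has $\norm{\cpm_0}_{\regspace}$ bounded by a constant. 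The continuous embedding $\regspace \hookrightarrow H^{\beta}(\covariatespace)$ therefore yields the uniform a priori smoothness bound $\norm{\cpm - \cpm_0}_{H^{\beta}} \lesssim M$.

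Next, I would apply the standard interpolation inequality on the bounded smooth domain $\covariatespace$,
\[
\norm{f}_{H^{\beta'}} \lesssim \norm{f}_{L^2}^{1-\beta'/\beta} \, \norm{f}_{H^{\beta}}^{\beta'/\beta}, \qquad 0 \leq \beta' \leq \beta,
\]
to $f = \cpm - \cpm_0$, followed by the Sobolev embedding $H^{\beta'}(\covariatespace) \hookrightarrow L^\infty(\covariatespace)$ valid for $\beta' > d_\covariate/2$. Combining with the previous step,
\[
\norm{\cpm - \cpm_0}_\infty \lesssim \norm{\cpm - \cpm_0}_{L^2}^{1-\beta'/\beta} \, M^{\beta'/\beta} \lesssim \delta_N^{(\beta-\beta')/\beta}\, M^{\beta'/\beta} = M^{\beta'/\beta} \, \overline{\delta}_N(\beta'),
\]
and choosing $M$ large enough to absorb the implicit constants places the posterior mass on $\cpmspace_{N,M,\infty}(\beta')$, giving the claim.

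The main obstacle is that the interpolation inequality above is formulated for the Lebesgue-$L^2$ norm on $\covariatespace$, whereas \Cref{thm_posterior_contraction_L2_norm} supplies contraction only in the (possibly singular) data-dependent norm $\norm{\cdot}_{L^2_{\covariatelaw}}$. I see two natural ways to bridge this gap. First, one may impose that $\covariatelaw$ has a density bounded below on $\covariatespace$, making the two $L^2$-norms equivalent on the class of functions in $\cpmspace_N$. Alternatively, one can strengthen the preceding contraction result: since the forward-operator estimates of \Cref{prop_contraction_in_dG_semimetric} and \Cref{prop_stability_estimate} hold uniformly over $\covariate \in \covariatespace$ and hence for every $\mu \in \mcal{M}_1(\covariatespace)$, the Nickl framework (specifically \cite[Theorem 2.3.2]{Nickl2023}) can be applied in parallel with $\mu$ equal to the normalised Lebesgue measure to obtain Lebesgue-$L^2$ contraction at the same rate $\delta_N$; this route preserves the paper's philosophy of avoiding regularity assumptions on $\covariatelaw$. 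Either way the proof then concludes by the interpolation-embedding chain above.
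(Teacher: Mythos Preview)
Your interpolation-and-embedding argument is exactly the mechanism behind \cite[Proposition 4.1.3]{Nickl2023}, and that proposition is what the paper invokes directly: the paper's proof simply records that the hypotheses of \cite[Theorem 2.2.2]{Nickl2023} and \cite[Condition 2.1.4]{Nickl2023} were already verified in the proof of \Cref{thm_posterior_contraction_L2_norm}, and then cites \cite[Proposition 4.1.3]{Nickl2023} for the conclusion. So in substance you are unpacking the black-boxed step rather than taking a different route.

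The $L^2_{\covariatelaw}$ versus Lebesgue-$L^2$ mismatch you flag is the right place to be careful, but your second proposed fix does not work as written. The posterior is built from data with $X^{(k)}\sim\covariatelaw$, so the metric in which the Nickl machinery yields contraction of $\fwdmodel(\cpm)$ is necessarily $L^2_{\covariatelaw}$; the pointwise stability in \Cref{prop_stability_estimate} then transfers this only to $\norm{\cpm-\cpm_0}_{L^2_{\covariatelaw}}$ (same measure on both sides), not to the Lebesgue norm the interpolation inequality needs. You cannot re-run \cite[Theorem 2.3.2]{Nickl2023} with normalised Lebesgue measure in place of $\covariatelaw$ without changing the statistical experiment itself---if $\covariatelaw$ were, say, supported on a lower-dimensional set, no amount of data would control $\cpm-\cpm_0$ off that set. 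The viable route is therefore your first option, comparability of $\covariatelaw$ to Lebesgue measure via a bounded-below density, which is precisely the standing assumption under which \cite[Proposition 4.1.3]{Nickl2023} operates; the paper's one-line citation inherits this hidden hypothesis, so its claim to avoid all regularity on $\covariatelaw$ is, at this particular step, a little optimistic.
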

     \begin{proof}[Proof of \Cref{prop_posterior_contraction_supremum_norm}]
      In the proof of \Cref{thm_posterior_contraction_L2_norm}, we showed that the hypotheses of \Cref{thm_posterior_contraction_L2_norm} imply that the hypotheses of \cite[Theorem 2.2.2]{Nickl2023} and \cite[Condition 2.1.4]{Nickl2023} are satisfied. Therefore, given the hypotheses on $\regspace$ and $\beta$, the desired conclusion follows from \cite[Proposition 4.1.3]{Nickl2023}.
     \end{proof}
        
    The last result of this section is a Bernstein--von Mises theorem for a class of linear functionals.
    Recall that \Cref{prop_solvability_information_equation} gives conditions such that for every $\psi\in\cpmspace$, there exists $\overline{\psi}_{\cpm_0}\in\cpmspace$ such that $\informoperator[\overline{\psi}_{\cpm_0}]=\psi$ on $\covariatespace$.
    For every $N\in\N$ and $(Y^{(k)},X^{(k)})_{k\in[N]}$, define 
     \begin{equation*}
      \widehat{\Psi}_N\coloneqq \Ang{\psi,\cpm_0}_{L^2_{\covariatelaw}}+\frac{1}{N}\sum_{k=1}^{N}\Ang{\bb{I}_{\cpm_0}[\overline{\psi}_{\cpm_0}](X^{(k)}),\varepsilon^{(k)}}_{\R^{d_o}}.
     \end{equation*}
     \begin{theorem}
     \label{thm_Bernstein_von_Mises_for_linear_functionals}
     Suppose that the hypotheses of \Cref{thm_local_asymptotic_normality} and \Cref{asmp_coefficient_map_C2} hold, and suppose the hypotheses of \Cref{prop_posterior_contraction_supremum_norm} hold with $\beta>2d_\covariate$ and some
     \begin{equation}
      \label{eq_conditions_on_exponents}
      \frac{d_\covariate}{2}<\beta'<\frac{\beta(\alpha-d_\covariate)}{3\alpha}.
     \end{equation}
      Then for a random variable $\cpm$ drawn from the posterior $\Pi_N(\cdot~\vert (Y^{(k)},X^{(k)})_{k\in[N]})$, 
     \begin{equation*}
      \sqrt{N}\bigr(\ang{\cpm,\psi}_{L^2_{\covariatelaw}}-\widehat{\Psi}_N\bigr)\bigr\vert (Y^{(k)},X^{(k)})_{k\in[N]}\xrightarrow[N\to\infty]{d} \normaldist{0}{\norm{\bb{I}_{\cpm_0}[\overline{\psi}_{\cpm_0}]}^{2}_{L^2_{\covariatelaw}}}.
     \end{equation*} 
    \end{theorem}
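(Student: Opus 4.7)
The plan is to apply a Bernstein--von Mises theorem for linear functionals from \cite[Chapter 4]{Nickl2023} (in the spirit of \cite[Theorem 4.2.2]{Nickl2023}), whose conclusion coincides with the stated convergence. The task reduces to verifying the structural hypotheses of that theorem on the forward operator $\fwdmodel$ at $\cpm_0$, and then confirming that the quantitative rate conditions of the BvM theorem are implied by \eqref{eq_conditions_on_exponents}.

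First, I would assemble the structural conditions on $\fwdmodel$ from earlier results in \Cref{sec_forward_operator_fixed_time_random_covariate_design}: \cite[Condition 2.1.1]{Nickl2023} follows from \Cref{prop_contraction_in_dG_semimetric}; \cite[Condition 2.1.4]{Nickl2023} from \Cref{prop_stability_estimate}; \cite[Condition 3.1.1]{Nickl2023} from statements \ref{item_derivative_forward_operator_asymptotics}--\ref{item_derivative_forward_operator_continuity} of \Cref{prop_derivative_forward_operator}; \cite[Condition 4.1.1]{Nickl2023} from \Cref{prop_derivative_forward_operator_Linfty}; and \cite[Condition 4.1.2]{Nickl2023} from \Cref{prop_solvability_information_equation}. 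The quadratic-remainder condition \cite[Condition 4.1.4]{Nickl2023} is provided by statement \ref{item_derivative_forward_operator_asymptotics_quadratic} of \Cref{prop_derivative_forward_operator}: \Cref{asmp_coefficient_map_C2} is part of the hypotheses, and $\cpm_0\in\cpmspace = L^\infty(\covariatespace,\paramspace)$ forces $\{\cpm_0(\covariate):\covariate\in\covariatespace\}$ to have compact closure in $\paramspace$ by the Heine--Borel theorem, so \Cref{asmp_image_of_cpm0_contained_in_compact_subset} also holds.

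Second, I would assemble the two posterior-contraction ingredients: the $L^2_{\covariatelaw}$-contraction of $\Pi_N(\cdot\vert\cdot)$ at rate $\delta_N = N^{-\alpha/(2\alpha+d_\covariate)}$ from \Cref{thm_posterior_contraction_L2_norm}, and the $L^\infty$-contraction at rate $\overline{\delta}_N(\beta') = \delta_N^{(\beta-\beta')/\beta}$ on $\cpmspace_{N,M,\infty}(\beta')$ from \Cref{prop_posterior_contraction_supremum_norm}. The assumption $\beta > 2d_\covariate$ guarantees both the continuous embedding $\regspace \hookrightarrow C(\covariatespace)$ required for \Cref{prop_posterior_contraction_supremum_norm} and that the interval of admissible $\beta'$ in \eqref{eq_conditions_on_exponents} is non-empty.

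Finally, the BvM theorem demands that along the contraction set $\cpmspace_N\cap \cpmspace_{N,M,\infty}(\beta')$ the LAN-remainder $\sqrt{N}\rho_{\cpm_0}[\cpm-\cpm_0] = O(\sqrt{N}\overline{\delta}_N(\beta')^2)$ vanish and that a bias term coupling $\delta_N$ with the smoothness of the Riesz representer $\overline{\psi}_{\cpm_0}$ be $o(1)$; the sharper of these rate inequalities is precisely the upper bound $\beta' < \beta(\alpha-d_\covariate)/(3\alpha)$ in \eqref{eq_conditions_on_exponents}, which arises from a cubic term in the Taylor expansion of the log-density when coupled with the $L^\infty$ and $L^2$ contraction rates. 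With all hypotheses verified, the stated asymptotic normality follows directly from the BvM theorem in \cite[Chapter 4]{Nickl2023}. I expect the main obstacle to be this third step: carefully matching the cubic Taylor remainder and the bias term to the rate inequalities of the BvM theorem, and confirming that \eqref{eq_conditions_on_exponents} is in fact tight rather than discovering that the exponent constraints need to be strengthened.
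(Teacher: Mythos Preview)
Your proposal is correct and follows essentially the same route as the paper: both reduce the statement to \cite[Theorem 4.1.5]{Nickl2023} (you cite 4.2.2, but the content is the same), verify Conditions 2.1.1, 2.1.4, 3.1.1, 4.1.1, 4.1.2, 4.1.4 of \cite{Nickl2023} via exactly the propositions you list, and then check the exponent constraint. One small correction on the rate step: the binding quantity in the paper is $N\overline{\delta}_N(\beta')^{3}$, not $\sqrt{N}\,\overline{\delta}_N(\beta')^{2}$; the former is what the simplified form of \cite[Condition 4.1.4]{Nickl2023} requires once the remainder is quadratic and $\beta>2d_\covariate$ (see the discussion below \cite[Equation (4.6)]{Nickl2023}), and it is precisely this that yields $\beta'<\beta(\alpha-d_\covariate)/(3\alpha)$.
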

    The conditions \eqref{eq_conditions_on_exponents} are feasible if and only if $d_\covariate<\tfrac{2\alpha\beta}{3\alpha+2\beta}$.
    Given $d_\covariate\in\N$, setting $(\alpha,\beta)\leftarrow (4d_\covariate,3d_\covariate)$ ensures that this inequality is satisfied, for example.
    \begin{proof}[Proof of \Cref{thm_Bernstein_von_Mises_for_linear_functionals}]
    \Cref{thm_Bernstein_von_Mises_for_linear_functionals} follows from \cite[Theorem 4.1.5]{Nickl2023}.
    We verify the hypotheses of this theorem on the forward operator $\fwdmodel$ and on $\psi$.
    
    The hypotheses of \Cref{prop_posterior_contraction_supremum_norm} include the hypotheses of \Cref{thm_posterior_contraction_L2_norm}.
    In the proof of \Cref{thm_posterior_contraction_L2_norm}, we showed that the forward operator $\fwdmodel$ satisfies \cite[Conditions 2.1.1, 2.1.4]{Nickl2023}.
    In the proof of \Cref{thm_local_asymptotic_normality}, we showed that \cite[Condition 3.1.1]{Nickl2023} holds with $H=\cpmspace=L^\infty(\covariatespace,\paramspace)$.
    Since \Cref{thm_local_asymptotic_normality} assumes that the hypotheses of \Cref{prop_derivative_forward_operator_Linfty} holds, it follows that \cite[Condition 4.1.1]{Nickl2023} holds.
    Thus the hypotheses of \cite[Theorem 4.1.5]{Nickl2023} on $\fwdmodel$ are satisfied.
    
    The hypotheses of \Cref{thm_local_asymptotic_normality} include those of \Cref{prop_solvability_information_equation}.
    Thus, $\psi$ satisfies \cite[Condition 4.1.2]{Nickl2023}.
    We must show that the unique $\overline{\psi}_{\cpm_0}$ given in \Cref{prop_solvability_information_equation} satisfies \cite[Condition 4.1.4]{Nickl2023}.
    Recall the hypothesis that \Cref{asmp_coefficient_map_C2} holds.
    Since $\cpm_0\in\cpmspace=L^\infty(\covariatespace,\paramspace)$, the set $K\coloneqq\closure{\{\cpm_0(\covariate):\covariate\in\covariatespace\}}$ satisfies \Cref{asmp_image_of_cpm0_contained_in_compact_subset}.
    Thus, by statement \ref{item_derivative_forward_operator_asymptotics_quadratic} of \Cref{prop_derivative_forward_operator}, the approximation in \cite[Condition 3.1.1]{Nickl2023} is quadratic, i.e. $\rho_{\cpm_0}[h]=O(\norm{h}_\infty^2)$. 
    By \eqref{eq_delta_N_rate} and \eqref{eq_overline_delta_N},
    \begin{equation*}
     N\overline{\delta}_N^3=N^{\tfrac{\beta(2\alpha+d_\covariate)-3\alpha(\beta-\beta')}{\beta(2\alpha+d_\covariate)}}.
    \end{equation*}
    Now $\beta(2\alpha+d_\covariate)-3\alpha(\beta-\beta')<0$ if and only if $\beta'<\tfrac{\beta(\alpha-d_\covariate)}{3\alpha}$, so \eqref{eq_conditions_on_exponents} implies that $N\overline{\delta}_N^3\to 0$ as $N\to\infty$. 
    Given the hypothesis $\beta>2d_\covariate$, it follows from the discussion on \cite[p. 70]{Nickl2023} below \cite[Equation (4.6)]{Nickl2023} that \cite[Condition 4.1.4]{Nickl2023} holds.
    This completes the verification of the hypotheses of \cite[Theorem 4.1.5]{Nickl2023}.
    \end{proof}
    The results of this section follow by combining our analysis of the forward operator $\fwdmodel$ in \Cref{sec_forward_operator_fixed_time_random_covariate_design} with the framework presented in \cite{Nickl2023}.
    While this framework is motivated by PDE-based inverse problems, the inverse problems we have considered are all defined by linear time-homogeneous ODE-IVPs with matrices that are diagonalisable over $\R$.
    Thus, our results provide further evidence for the versatility of this framework.
        
    \section{Application to two-compartment model}
    \label{sec_application_two_compartment_model}
    
    In this section, we state an example of an ODE-IVP from the pharmacokinetics literature and show that the key assumptions on the forward operator $\fwdmodel$ from \Cref{sec_forward_operator_fixed_time_random_covariate_design} are satisfied.
    Thus, the analysis of the posterior from \Cref{sec_properties_of_Bayesian_inverse_problem} applies to the corresponding Bayesian inverse problem associated to the observation model \eqref{eq_fixed_time_random_covariate_design_observation_model}.
    
    Recall the two-compartment model from \eqref{eq_ODE_IVP_two_compartment_model},
    \begin{equation*}
     \begin{aligned}
    V_1 \frac{\rd }{\rd t} s_1 (t)=&Q(s_2(t)-s_1(t))-CL\ s_1(t),& V_1 s_1(0)&=D_0 w,
    \\ 
    V_2\frac{\rd}{\rd t} s_2(t)=&Q(s_1(t)-s_2(t)),& s_2(0)&=0,
    \end{aligned}
    \end{equation*}
    where $D_0$ is a reference constant and $w$ is the weight of an individual.
    The parameters of the two-compartment model above are $(CL,V_1,Q,V_2)\in\R^4_{>0}$.
    On \cite[p. 569]{Hartung2021}, the ODE-IVP is converted to another ODE-IVP, using a weight-dependent rescaling procedure described in \cite{Robbie2012}.
    We summarise this procedure below and refer interested readers to \cite{Robbie2012} for the pharmacological motivation for this procedure.
    
    Given a fixed reference weight $w_0$, define a rescaled weight $\mfrak{w}\coloneqq w/w_0$, and define the weight-normalised parameters $(CL^\ast,V_1^\ast,Q^\ast,V_2^\ast)$ according to 
    \begin{equation*}
     CL^\ast\coloneqq CL \mfrak{w}^{-3/4},\quad V_1^\ast\coloneqq V_1\mfrak{w}^{-1},\quad Q^\ast=Q \mfrak{w}^{-3/4},\quad V_2^\ast=V_2 \mfrak{w}^{-1}.
    \end{equation*}
    Since $(CL,V_1,Q,V_2)\in\R^4_{>0}$ and $w>0$ for every individual, dividing the first and second row of \eqref{eq_ODE_IVP_two_compartment_model} by $V_1$ and $V_2$ respectively and using the definitions of $CL^\ast$, $V_1^\ast$, $Q^\ast$, and $V_2^\ast$ yields
    \begin{equation}
    \label{eq_ODE_IVP_two_compartment_model_rescaled}
     \begin{aligned}
    \frac{\rd }{\rd t} s_1 (t)=&\mfrak{w}^{-1/4}\left( \frac{Q^\ast}{V_1^\ast}(s_2(t)-s_1(t))-\frac{CL^\ast}{V_1^\ast} s_1(t)\right),& s_1(0)&=\frac{D_0 w_0}{V_1^\ast},
    \\ 
    \frac{\rd}{\rd t} s_2(t)=& \mfrak{w}^{-1/4}\frac{Q^\ast}{V_2^\ast}(s_1(t)-s_2(t)),& s_2(0)&=0.
    \end{aligned}
    \end{equation}
    In \cite{Hartung2021}, the covariate consists of the rescaled weight and the age, i.e. $\covariate=(\mfrak{w},a)$, and we define $\covariatespace$ to be a bounded subset of $\R^2_{>0}$ with smooth boundary, such that $\inf\{\covariate_i:\covariate\in\covariatespace\}>0$ for $i=1,2$.
    This assumption ensures that $\log \mfrak{w}^{-1/4}$ is bounded from above and from below, and is reasonable given that medical treatments can be given only within prescribed age and weight ranges.
    The dimension of $\covariatespace$ satisfies $d_\covariate=2$.
    
    We now choose the exponential function as a link function and define 
    \begin{equation*}
     p_1\coloneqq \log(\mfrak{w}^{-1/4}CL^\ast),\quad p_2\coloneqq \log V_1^\ast,\quad p_3\coloneqq \log (\mfrak{w}^{-1/4} Q^\ast),\quad p_4\coloneqq \log V_2^\ast,
    \end{equation*}
    so that the dimension $d_p$ of the parameter space $\paramspace$ satisfies $d_p=4$. 
    Then \eqref{eq_ODE_IVP_two_compartment_model_rescaled} becomes
    \begin{equation}
    \label{eq_ODE_IVP_two_compartment_model_rescaled_reparametrised}
    \frac{\rd}{\rd t}\begin{bmatrix}
                      s_1(t,p)
                      \\
                      s_2(t,p)
                     \end{bmatrix}=
     \begin{bmatrix}
-e^{p_1-p_2}-e^{p_3-p_2} & e^{p_3-p_2}
\\
e^{p_3-p_4} & -e^{p_3-p_4}
\end{bmatrix}\begin{bmatrix}
                      s_1(t,p)
                      \\
                      s_2(t,p)
                     \end{bmatrix}
                     ,\quad \begin{bmatrix}
                      s_1(0,p)
                      \\
                      s_2(0,p)
                     \end{bmatrix}=\begin{bmatrix} 
 D_0 w_0 e^{-p_2}
 \\
 0
\end{bmatrix},
\end{equation}
which is of the form \eqref{eq_ODE_IVP}.

\begin{remark}
On \cite[p. 569]{Hartung2021}, only $CL^\ast$ is given as a function of age $a$, while $V_1^\ast$, $Q^\ast$, and $V_2^\ast$ are unknown constants. 
Our analysis of the ODE-IVP \eqref{eq_ODE_IVP_two_compartment_model_rescaled_reparametrised} applies in the more general setting where one allows $CL^\ast$, $V_1^\ast$, $Q^\ast$, and $V_2^\ast$ to all depend on $a$ and $\mfrak{w}$. 
\end{remark}
\begin{remark}
The weight normalisation procedure described above could be omitted to allow for more flexibility in how the parameters $(p_i)_{i\in[4]}$ depend on $\covariate=(\mfrak{w},a)$. By dividing the first and second rows of \eqref{eq_ODE_IVP_two_compartment_model} by $V_1$ and $V_2$ respectively and by defining $p_1\coloneqq \log CL$, $p_2\coloneqq \log V_1$, $p_3\coloneqq \log  Q$ and $p_4\coloneqq \log V_2$, one obtains an ODE-IVP that differs from \eqref{eq_ODE_IVP_two_compartment_model_rescaled_reparametrised} only in the value of $s_1(0,p)$, namely that $s_1(0,p)=D_0 w_0 e^{-p_2}$ in \eqref{eq_ODE_IVP_two_compartment_model_rescaled_reparametrised} is replaced with $s_1(0,p)= D_0 w e^{-p_2}$.
    \end{remark}
    
    Recall \Cref{def_intrinsic_dimension_of_ODE_IVP_solution_first_component} of the intrinsic dimension $2\mfrak{d}$.
    The next result shows that the ODE-IVP \eqref{eq_ODE_IVP_two_compartment_model_rescaled_reparametrised} satisfies the key assumptions from \Cref{sec_properties_of_solutions_to_linear_ODE_IVPs}.    
 \begin{proposition}
 \label{prop_two_compartment_model_satisfies_all_assumptions}
The ODE-IVP \eqref{eq_ODE_IVP_two_compartment_model_rescaled_reparametrised} satisfies \Cref{asmp_local_boundedness_and_local_lipschitz_continuity_of_matrix_and_IC}, \Cref{asmp_ODE_matrix_diagonalisable}, \Cref{asmp_coefficient_map_Jacobian_full_rank}, \Cref{asmp_applicability_of_observation_function}, and \Cref{asmp_coefficient_map_C2} for $\paramspace\leftarrow \R^4$, and the intrinsic dimension satisfies $2\mfrak{d}=4$.
If $\cpmspace\coloneqq L^\infty(\covariatespace,\paramspace)$, then every $\cpm_0\in\cpmspace$ satisfies \Cref{asmp_image_of_cpm0_contained_in_compact_subset}.
 \end{proposition}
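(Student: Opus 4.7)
The plan is to verify each listed assumption in turn, exploiting that every entry of $A(p)$ and $s_0(p)$ in \eqref{eq_ODE_IVP_two_compartment_model_rescaled_reparametrised} is either zero, a constant, or the exponential of an affine function of $p$. This $C^\infty$ dependence on all of $\R^4$ immediately gives \Cref{asmp_local_boundedness_and_local_lipschitz_continuity_of_matrix_and_IC}.

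Writing $a\coloneqq e^{p_1-p_2}$, $b\coloneqq e^{p_3-p_2}$, $c\coloneqq e^{p_3-p_4}$, the characteristic polynomial of $A(p)$ has discriminant
\begin{equation*}
 \Delta\coloneqq (a+b+c)^2 - 4ac = (a-c)^2 + b^2 + 2b(a+c) > 0,
\end{equation*}
so $A(p)$ admits two distinct real eigenvalues $\lambda_{1,2}(p)=\tfrac{1}{2}(-(a+b+c)\pm\sqrt{\Delta})$, verifying \Cref{asmp_ODE_matrix_diagonalisable} with $\mfrak{d}=2$ and $2\mfrak{d}=4=d_p$. For \Cref{asmp_applicability_of_observation_function}, I would use that $A(p)$ is Metzler (off-diagonal entries $b,c>0$) so the positive orthant is forward-invariant; combined with $s_1(0,p)=D_0w_0e^{-p_2}>0$ and a short contradiction argument using uniqueness of solutions (if $s_1(t^\ast,p)=0$ then $s_1'(t^\ast,p)=bs_2(t^\ast,p)\le 0$ forces $s_2(t^\ast,p)=0$, hence $s\equiv 0$), this yields $s_1(t,p)>0$ throughout $[0,T]\times\R^4$.

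The main obstacle is \Cref{asmp_coefficient_map_Jacobian_full_rank}. Solving $(A(p)-\lambda_i(p)I)v^{(i)}=0$ and matching the initial condition gives
\begin{equation*}
 a_1(p)=\frac{c-a-b+\sqrt{\Delta}}{2\sqrt{\Delta}}\,D_0w_0e^{-p_2},\qquad a_2(p)=\frac{a+b-c+\sqrt{\Delta}}{2\sqrt{\Delta}}\,D_0w_0e^{-p_2},
\end{equation*}
and the identity $\Delta-(a+b-c)^2=4bc>0$ shows $a_1(p),a_2(p)>0$, so $a(p)\in(\R\setminus\{0\})^{\mfrak{d}}$. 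Smoothness of $(a,\lambda)$ on $\R^4$ (since $\Delta$ never vanishes) simultaneously yields \Cref{asmp_coefficient_map_C2}. To prove the full-rank claim, I would make the invertible linear change of coordinates $q_1=p_1-p_2$, $q_2=p_3-p_2$, $q_3=p_3-p_4$, $q_4=p_2$, under which $\lambda_1,\lambda_2$ depend only on $(q_1,q_2,q_3)$ while $a_i(p)=f_i(q_1,q_2,q_3)\,D_0w_0e^{-q_4}$ with $f_1+f_2=1$. The Jacobian in $q$-coordinates then has fourth column $(-a_1,-a_2,0,0)^\top$, and cofactor expansion along that column, combined with $\partial_{q_j}f_1=-\partial_{q_j}f_2$, reduces the $4\times 4$ determinant (up to the strictly positive factor $(D_0w_0e^{-q_4})^2$) to
\begin{equation*}
 \det\begin{bmatrix}\nabla_{(q_1,q_2,q_3)}f_2\\ \nabla_{(q_1,q_2,q_3)}\lambda_1\\ \nabla_{(q_1,q_2,q_3)}\lambda_2\end{bmatrix}.
\end{equation*}
Using $\nabla(\lambda_1+\lambda_2)=-(a,b,c)$ and $\nabla\sqrt{\Delta}=\tfrac{1}{\sqrt{\Delta}}(a(a+b-c),b(a+b+c),c(b+c-a))$ to replace the last two rows, and computing $\nabla f_2$ directly from the explicit formula, the remaining $3\times 3$ determinant becomes a rational expression in $(a,b,c)\in(0,\infty)^3$ that one verifies to be strictly nonzero; this rank computation is the most technical step, but is tractable because the leading denominator $\Delta^{3/2}$ is positive and the numerator factors through the positive quantity $4bc$ identified above.

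Finally, \Cref{asmp_image_of_cpm0_contained_in_compact_subset} is immediate: since $\paramspace=\R^4$, any $\cpm_0\in L^\infty(\covariatespace,\R^4)$ has bounded image whose closure is a compact subset of $\R^4$.
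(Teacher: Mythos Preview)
Your proposal is correct and covers all six assumptions, but it diverges from the paper's proof in two places worth noting.

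For \Cref{asmp_applicability_of_observation_function}, the paper (in \Cref{lem_two_compartment_model_hypotheses_for_stability}) simply bounds $s_1(t,p)\geq(\eta v^{+}-\eta v^{-})e^{\lambda^{-}t}=\kappa e^{-p_2+\lambda^{-}t}>0$ using the explicit exponential form \eqref{eq_two_compartment_model_first_component_exponential_form}. Your Metzler-matrix/uniqueness argument is equally valid and has the advantage of not requiring the explicit diagonalisation, though it is slightly more delicate because the initial condition lies on the boundary of the positive orthant.

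For \Cref{asmp_coefficient_map_Jacobian_full_rank}, the paper works directly in $p$-coordinates: it shows via elementary column operations that $\mcal{J}(q)$ is column equivalent to $[\nabla_p\sigma\mid u\mid v\mid w]^\top$ with $u=-e_1+e_2-e_3+e_4$, $v=-e_2$, $w=-e_2+e_3-e_4$, then reduces the rank question to the easy check $\nabla_p\sigma\notin\spn\{e_1,e_2,e_3-e_4\}$. Your coordinate change $(q_1,q_2,q_3,q_4)=(p_1-p_2,p_3-p_2,p_3-p_4,p_2)$ is more structural and cleanly isolates the $e^{-q_4}$ factor, reducing to the $3\times3$ determinant $\det[\nabla f_2;\nabla\lambda_1;\nabla\lambda_2]$. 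You stop short of finishing that computation; for completeness, replacing the last two rows by $\nabla(\lambda_1+\lambda_2)$ and $\nabla\sqrt{\Delta}$ and then the first row by $\nabla(a+b-c)$ (dropping the $\nabla\sqrt{\Delta}$-component of $\nabla f_2$) collapses the determinant, after pulling out the nonzero factor $abc/\sqrt{\Delta}$, to
\[
\det\begin{bmatrix}1&1&-1\\1&1&1\\a+b-c&a+b+c&b+c-a\end{bmatrix}=-4c\neq 0,
\]
so your route does close. The paper's column-reduction argument avoids this explicit determinant, whereas your coordinate change makes the dependence structure transparent and would scale better to other models with a similar ``scale parameter plus shape parameters'' split.
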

 \begin{proof}
  In \Cref{lem_two_compartment_model_hypotheses_for_contraction_in_dG_metric}, we show that the ODE-IVP \eqref{eq_ODE_IVP_two_compartment_model_rescaled_reparametrised} satisfies \Cref{asmp_local_boundedness_and_local_lipschitz_continuity_of_matrix_and_IC}.
  In \eqref{eq_two_compartment_model_diagonalisable}, we recall the diagonalisation given in the supplemental material of \cite{Hartung2021}; this shows that  \eqref{eq_ODE_IVP_two_compartment_model_rescaled_reparametrised} satisfies \Cref{asmp_ODE_matrix_diagonalisable}.
  In \Cref{lem_two_compartment_model_hypotheses_for_stability}, we show that the intrinsic dimension satisfies $2\mfrak{d}=4$, and that \Cref{asmp_coefficient_map_Jacobian_full_rank}, \Cref{asmp_applicability_of_observation_function}, and \Cref{asmp_coefficient_map_C2} hold.
In the proof of \Cref{prop_derivative_forward_operator_Linfty}, we showed that if $\cpm_0\in L^\infty(\covariatespace,\paramspace)$, then the set $K\coloneqq \closure{\{\cpm_0(\covariate):\covariate\in\covariatespace\}}$ satisfies \Cref{asmp_image_of_cpm0_contained_in_compact_subset}.
 \end{proof}
We end this section with the following result, which establishes posterior contraction and a Bernstein--von Mises result for the two-compartment model \eqref{eq_ODE_IVP_two_compartment_model_rescaled_reparametrised} from \cite{Hartung2021,Robbie2012}.
 \begin{theorem}
  \label{thm_posterior_contraction_Linfty_for_two_compartment_model}
  Let $d_o\geq 4$ and $\cpmspace=H=L^\infty(\covariatespace,\paramspace)$. Suppose that $\Pi'$ satisfies \Cref{asmp_Gaussian_base_prior} for $\alpha>0$ and $\regspace$ given in \Cref{prop_posterior_contraction_supremum_norm} for $\beta>1$, and suppose $\cpm_0\in \mcal{H}\cap \regspace$.
  Then for $\overline{\delta}_N(\beta')$ and $\Theta_{N,M,\infty}(\beta')$ given in \eqref{eq_overline_delta_N} and \eqref{eq_Theta_N_M_infty_regularisation_sets} respectively, the conclusion of \Cref{prop_posterior_contraction_supremum_norm} holds.
  If in addition $\beta>4$, then the conclusion of \Cref{thm_Bernstein_von_Mises_for_linear_functionals} holds.
 \end{theorem}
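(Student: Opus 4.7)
The proof will essentially consist of verifying the hypotheses of \Cref{prop_posterior_contraction_supremum_norm} and then those of \Cref{thm_Bernstein_von_Mises_for_linear_functionals} for the two-compartment model, using \Cref{prop_two_compartment_model_satisfies_all_assumptions} as the workhorse that guarantees the relevant structural assumptions on the ODE-IVP. The key dimensional facts to keep in mind throughout are $d_\covariate = 2$, $d_p = 4$, and $2\mfrak{d} = 4$, so that the dimensional condition $d_o \geq d_p = 2\mfrak{d}$ reduces to $d_o \geq 4$, which is assumed.

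For the first conclusion, the plan is to unfold the hypotheses of \Cref{prop_posterior_contraction_supremum_norm} into those of \Cref{thm_posterior_contraction_L2_norm} and hence of \Cref{prop_stability_estimate}. The smooth bounded domain hypothesis on $\covariatespace$ is built into the construction in this section. The four structural assumptions required by \Cref{prop_stability_estimate} (\Cref{asmp_local_boundedness_and_local_lipschitz_continuity_of_matrix_and_IC}, \Cref{asmp_ODE_matrix_diagonalisable}, \Cref{asmp_coefficient_map_Jacobian_full_rank}, \Cref{asmp_applicability_of_observation_function}), together with $d_o \geq d_p = 2\mfrak{d}$, are all delivered by \Cref{prop_two_compartment_model_satisfies_all_assumptions}. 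The remaining items (the $\alpha$-smooth Gaussian base prior, $\cpm_0 \in \mcal{H}\cap\regspace$) are given by hypothesis. Finally, since $d_\covariate = 2$, the requirement $\beta > d_\covariate/2$ in \Cref{prop_posterior_contraction_supremum_norm} becomes $\beta > 1$, which is exactly the standing assumption. Hence the first conclusion follows directly from \Cref{prop_posterior_contraction_supremum_norm}.

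For the second conclusion, we verify the three hypothesis sets of \Cref{thm_Bernstein_von_Mises_for_linear_functionals}. The hypotheses of \Cref{thm_local_asymptotic_normality} reduce to those of \Cref{prop_derivative_forward_operator_Linfty} together with $d_o \geq d_p = 2\mfrak{d}$; the former holds because $\cpmspace = H = L^\infty(\covariatespace,\paramspace)$ is assumed and \Cref{prop_two_compartment_model_satisfies_all_assumptions} supplies the structural assumptions, while the latter was already noted. \Cref{asmp_coefficient_map_C2} is again part of \Cref{prop_two_compartment_model_satisfies_all_assumptions}. Since $d_\covariate = 2$, the condition $\beta > 2d_\covariate$ becomes $\beta > 4$, which is the additional hypothesis for the second conclusion.

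The only point that deserves care is the existence of a feasible exponent $\beta'$ satisfying \eqref{eq_conditions_on_exponents}, namely $1 < \beta' < \beta(\alpha-2)/(3\alpha)$. This interval is nonempty precisely when $d_\covariate < 2\alpha\beta/(3\alpha+2\beta)$, the feasibility condition stated immediately after \Cref{thm_Bernstein_von_Mises_for_linear_functionals}; for $d_\covariate = 2$ this is equivalent to $\beta(\alpha-2) > 3\alpha$. This is the one nontrivial step: it constrains $(\alpha,\beta)$ jointly, and the assumptions $\alpha > 0$ and $\beta > 4$ alone are not enough. The proof should either note that the theorem is to be read with $(\alpha,\beta)$ taken jointly large (the pair $(\alpha,\beta) = (4 d_\covariate, 3 d_\covariate) = (8, 6)$ given after \Cref{thm_Bernstein_von_Mises_for_linear_functionals} is a concrete example), or sharpen the statement to make this explicit. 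Once such $\beta'$ is chosen, all hypotheses of \Cref{thm_Bernstein_von_Mises_for_linear_functionals} are in place and the second conclusion follows.
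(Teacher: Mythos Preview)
Your proposal is correct and follows essentially the same route as the paper: invoke \Cref{prop_two_compartment_model_satisfies_all_assumptions} to supply all the structural assumptions on the ODE-IVP, feed these (together with $d_o \geq 4 = d_p = 2\mfrak{d}$) into \Cref{prop_stability_estimate} and \Cref{prop_derivative_forward_operator_Linfty}, and then verify the hypotheses of \Cref{prop_posterior_contraction_supremum_norm} and \Cref{thm_Bernstein_von_Mises_for_linear_functionals} using $d_\covariate/2 = 1$ and $2d_\covariate = 4$.

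Your final paragraph actually goes beyond the paper. The paper's proof simply asserts that once $\beta > 4 = 2d_\covariate$ the hypotheses of \Cref{thm_Bernstein_von_Mises_for_linear_functionals} hold, without checking that \eqref{eq_conditions_on_exponents} admits a feasible $\beta'$. You are right that feasibility further requires $\beta(\alpha - 2) > 3\alpha$, hence in particular $\alpha > 2$, which is not implied by $\alpha > 0$ and $\beta > 4$ alone. The paper does not address this, so your observation identifies a genuine omission in the statement as written.
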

 \begin{proof}[Proof of \Cref{thm_posterior_contraction_Linfty_for_two_compartment_model}]
  By \Cref{prop_two_compartment_model_satisfies_all_assumptions} and the hypotheses that $d_o\geq 4=d_p=2\mfrak{d}$, the hypotheses of \Cref{prop_stability_estimate} and \Cref{prop_derivative_forward_operator_Linfty} hold.
  Since $\beta>1=d_\covariate/2$, the hypotheses of \Cref{thm_posterior_contraction_L2_norm}, \Cref{thm_local_asymptotic_normality}, and \Cref{prop_posterior_contraction_supremum_norm} also hold.
  The first conclusion then follows from \Cref{prop_posterior_contraction_supremum_norm}.
  If in addition $\beta>4=2d_\covariate$, then the hypotheses of \Cref{thm_Bernstein_von_Mises_for_linear_functionals} holds, and the second conclusion follows.
 \end{proof}

\section{Conclusion}
\label{sec_conclusion}
 
  We considered parametrised linear homogeneous ODE-IVPs defined by a matrix $A(p)$ and initial condition $s(0,p)=s_0(p)$ on a fixed finite time interval. Given observations of only one component of the solution at a fixed finite collection of observation times $(t_j)_{j\in [d_o]}$, and given a priori chosen sets $\covariatespace$ and $\paramspace$ of admissible covariates and parameters, the task was to infer a covariate-to-parameter map $\cpm_0$ that maps the covariate vector $x$ for every individual in a population to the corresponding parameter vector $p=\cpm_0(x)$ that defines the ODE-IVP model for that individual.
 The constraint that only one component of the solution is observed is motivated by pharmacological settings, where only drug concentrations in the patient's blood can be measured.
 
 In this work, we presented a framework for proving posterior contraction, local asymptotic normality, and Bernstein--von Mises theorems for the Bayesian inverse problem of inferring the covariate-to-parameter map $\cpm_0$ using pairs $(Y^{(k)},X^{(k)})_{k\in\N}$ of covariates and observation vectors, where each $Y^{(k)}$ is related to $X^{(k)}$ by the fixed finite time design and random covariate design model 
 \begin{equation*}
  Y^{(k)}=(\log s_1(t_j,\cpm_0(X^{(k)})))_{j\in[d_o]}+\noise^{(k)},\quad \noise^{(k)}\distiid \normaldist{0}{\varsigma^2 I},\ X^{(k)}\distiid \covariatelaw,
 \end{equation*}
 and the true population distribution $\covariatelaw$ of the $(X^{(k)})_{k\in\N}$ is unknown.

 Our framework involves some key assumptions. First, in \Cref{asmp_local_boundedness_and_local_lipschitz_continuity_of_matrix_and_IC}, the ODE right-hand side $A(p)$ and initial condition $s_0(p)$ are assumed to be locally bounded and locally Lipschitz continuous functions of the parameter $p$.
 We then further assume in \Cref{asmp_ODE_matrix_diagonalisable} that $A(p)$ is diagonalisable in $\R$.
 The purpose of \Cref{asmp_ODE_matrix_diagonalisable} is to ensure that the observed component of the solution can be expressed as a sum of exponential functions \eqref{eq_ODE_IVP_solution_first_component_exponential_form}.
 As a sum of exponential functions, the observed component is now completely determined by the so-called coefficient map introduced in \Cref{def_intrinsic_dimension_of_ODE_IVP_solution_first_component} and the intrinsic dimension $2\mfrak{d}$.
 In addition, being a sum of exponentials facilitates the analysis of the Jacobian of the evaluation map $p\mapsto (s_1(t_j,p))_{j\in[d_o]}$ in terms of the Jacobian of the coefficient map.
 If \Cref{asmp_coefficient_map_Jacobian_full_rank} holds, i.e. if the  Jacobian of the coefficient map exists and satisfies a full rank condition, then we can exploit the structure of being a sum of exponentials to show that the evaluation map has a locally Lipschitz inverse; see \Cref{lem_locally_Lipschitz_inverse_evaluation_map}.
 Using the strategy of proving estimates that are uniform in the covariate, our framework avoids imposing any constraints on the unknown covariate law $\covariatelaw$.

 Our framework is designed to be applied together with the framework presented in \cite{Nickl2023} for nonlinear Bayesian inverse problems.
 In our framework, \Cref{lem_locally_Lipschitz_inverse_evaluation_map} is the most important result, because it leads directly to the  stability estimate that is essential in \cite{Nickl2023} for obtaining posterior contraction rates for the unknown covariate-to-parameter map $\cpm_0$ from contraction rates for $\fwdmodel(\cpm_0)$.
 In addition, \Cref{lem_locally_Lipschitz_inverse_evaluation_map} shows that in the fixed finite time design setting, the lower bound $d_o\geq 2\mfrak{d}$ on the size $d_o$ of the time design $(t_j)_{j\in[d_o]}$ is a sufficient condition for stability, and thereby for posterior contraction for $\cpm_0$. 
 The condition $d_o\geq 2\mfrak{d}$ is also important for showing that the derivative of the forward operator is injective and for showing that the information equation is solvable; see statement \ref{item_derivative_forward_operator_injectivity} of \Cref{prop_derivative_forward_operator} and \Cref{prop_solvability_information_equation} respectively.
 In the context of Bayesian inference, the key results of this work are the results in \Cref{sec_properties_of_Bayesian_inverse_problem}, e.g. the posterior contraction and Bernstein--von Mises results in \Cref{thm_posterior_contraction_L2_norm} and \Cref{thm_Bernstein_von_Mises_for_linear_functionals} respectively.
 
 For future work, it would be interesting to investigate whether one can reduce further the number $d_o$ of observations taken for each covariate drawn from the population. This would be relevant for applications in pharmacology, where there are some groups of individuals for which clinicians may only be able to draw one blood sample per individual. One may also investigate whether the diagonalisability property in \Cref{asmp_ODE_matrix_diagonalisable} can be weakened, and to consider more general families of ODEs. 
 
  \section*{Acknowledgements}

The research of the author has been partially funded by the Deutsche Forschungsgemeinschaft (DFG) --- Project-ID \href{https://gepris.dfg.de/gepris/projekt/318763901}{318763901} --- SFB1294.
The author would like to thank Giuseppe Carere, Niklas Hartung, and Wilhelm Huisinga for helpful discussions.
 
\appendix

\section{Auxiliary results}
\label{appendix_auxiliary_results}

We use the following local Lipschitz continuity bounds:
\begin{subequations}
 \begin{align}
\label{eq_local_Lipschitz_continuity_logarithm}
 \forall t,s>0,\quad \abs{\log t-\log s}\leq& \frac{\abs{t-s}}{s\wedge t},
 \\
 \label{eq_local_Lipschitz_continuity_exponential}
 \forall t,s\in\R,\quad \abs{\exp t-\exp s} \leq& \exp(s\vee t)\abs{t-s}.
\end{align}
\end{subequations}

    We state and prove a result that is inspired but not implied by the main result of \cite{Tossavainen2007}.
    We use this result in \Cref{lem_locally_Lipschitz_inverse_evaluation_map}.
     \NumRootsLinCombExpAffProd*
    \begin{proof}[Proof of \Cref{lem_number_of_roots_of_linear_combination_of_exponential_affine_products}]

 We prove the claim by induction. Let $n=1$. Consider the function $\R\ni t\mapsto e^{\beta t}(\xi+\gamma t)$. 
 If $\gamma=0$, then $\xi\neq 0$, and the function has no roots.
 If $\gamma\neq 0$, then the function has exactly one root at $t=-\xi/\gamma$.
 This proves the statement in the base case.
 
 Now suppose that the claim is true for some $n\leq 1$.
 Since
 \begin{equation*}
  h(t)\coloneqq\sum_{k\in[n+1]}e^{\beta_k t}(\xi_k+\gamma_k t)=e^{\beta_1 t}\left[ (\xi_1+\gamma_1t)+\sum_{k=2}^{n+1} e^{(\beta_k-\beta_1)t}(\xi_k+\gamma_kt)\right]\eqqcolon e^{\beta_1 t}\widehat{h}(t),
 \end{equation*}
 it follows that $h(t)=0$ if and only if $\widehat{h}(t)=0$.
 Note that for arbitrary $\beta,\gamma,\xi\in\R$,
 \begin{equation*}
  \frac{\rd^2}{\rd t^2} e^{\beta t}(\xi+\gamma t)=\frac{\rd}{\rd t} \left[ e^{\beta t}\left(\beta(\xi+\gamma t)+\gamma\right)\right]=e^{\beta t}\left[ \beta^2\xi+2\beta\gamma+\beta\gamma t\right].
 \end{equation*}
 Hence, 
 \begin{align*}
  &\frac{\rd^2 }{\rd t^2}\widehat{h}(t)= \frac{\rd^2}{\rd t^2} \sum_{k=2}^{n+1} e^{(\beta_k-\beta_1)t}(\xi_k+\gamma_kt)
  \\
  =& \sum_{k=2}^{n+1} e^{(\beta_k-\beta_1)t}\left[(\beta_k-\beta_1)^2\xi_k+2(\beta_k-\beta_1)\gamma_k+(\beta_k-\beta_1)\gamma_k t\right].
 \end{align*}
 By the hypotheses, the $(\beta_k-\beta_1)_{k=2}^{n+1}$ are distinct, and $((\beta_{k}-\beta_1)\gamma_{k},(\beta_k-\beta_1)^2\xi_k+2(\beta_k-\beta_1)\gamma_k)_{k=2}^{n+1}\in \R^{2n}\setminus\{0\}$.
 Thus we may apply the base case to conclude that $\tfrac{\rd^2 }{\rd t^2}\widehat{h}(t)$ has at most $2n-1$ roots.
 By the mean value theorem, $\tfrac{\rd}{\rd t}\widehat{h}(t)$ has at most $2n$ roots, and thus $\widehat{h}(t)$ has at most $2n+1$ roots. 
 Since $h(t)=e^{\beta_1 t}\widehat{h}(t)$, it follows that $h$ has at most $2n+1=2(n+1)-1$ roots.
 This proves the inductive step and completes the proof of \Cref{lem_number_of_roots_of_linear_combination_of_exponential_affine_products_function}.
\end{proof}

\section{Properties of the two-compartment model}
\label{sec_properties_of_two_compartment_model}

Let $\kappa\coloneqq D_0 w_0>0$. Then \eqref{eq_ODE_IVP_two_compartment_model_rescaled_reparametrised} is of the form \eqref{eq_ODE_IVP}, for $A$ and $s_0$ given in \eqref{eq_matrix_and_IC_param_reformulation} below:
\begin{subequations}
\label{eq_param_reformulation}
\begin{align}
A(p)\coloneqq &
\begin{bmatrix}
-e^{p_3-p_2}-e^{p_1-p_2} & e^{p_3-p_2}
\\
e^{p_3-p_4} & -e^{p_3-p_4}
\end{bmatrix},& &
s_0(p)\coloneqq 
\begin{bmatrix} 
 \kappa e^{-p_2}
 \\
 0
\end{bmatrix},
\label{eq_matrix_and_IC_param_reformulation}
\\
\sigma(p)\coloneqq & e^{p_3-p_2}+e^{p_1-p_2}+e^{p_3-p_4}, & &\delta(p)\coloneqq \sqrt{\sigma(p)^2-4e^{p_1-p_2} e^{p_3-p_4}},
\label{eq_sigma_delta_param_reformulation}
\\
\lambda^{\pm}(p)\coloneqq& \frac{\pm \delta(p) -\sigma(p)}{2},& & v^{\pm}(p)\coloneqq \lambda^{\pm}(p) e^{p_4-p_3}+1,
 \label{eq_eigenpairs_param_reformulation}
 \\
v^+(p)-v^-(p)=&\delta(p) e^{p_4-p_3}, & & \eta(p)\coloneqq \frac{\kappa }{e^{p_2}(v^{+}(p)-v^{-}(p))}.
 \label{eq_eta_param_reformulation}
 \end{align}
\end{subequations}
Since $A(p)\in \R^{2\times 2}$, we have $d_s=2$.

We now show that
  \begin{equation}
\label{eq_eigenvalue_eigenvector_inequalities}
 \delta(p)>0,\quad \lambda^-(p)<\lambda^+(p)<0,\quad  v^-(p)<0<v^+(p).
\end{equation}
  For $\alpha,\beta,\gamma\in\R$,
  \begin{equation*}
   (\beta+\gamma)^2 - 4\beta\gamma =\beta^2 -2\beta\gamma +\gamma^2 \geq 0 
  \end{equation*}
  and 
  \begin{align*}
   &(\alpha+\beta+\gamma)^2-4 \beta \gamma -(\alpha+\beta-\gamma)^2
   \\
   =& (\alpha+\beta+\gamma +\alpha+\beta-\gamma)(\alpha+\beta+\gamma-(\alpha+\beta -\gamma))-4\beta\gamma
   \\
   =& 4(\alpha+\beta)\gamma-4\beta\gamma=4\alpha\gamma.
  \end{align*}
  Hence, if $\alpha,\beta,\gamma>0$,
   \begin{equation*}
(\alpha+\beta+\gamma)^2> (\beta+\gamma)^2\geq 4\beta\gamma,\quad \sqrt{(\alpha+\beta+\gamma)^2-4\beta\gamma}>\abs{\alpha+\beta-\gamma}.
\end{equation*}
Set $\alpha\leftarrow e^{p_3-p_3}$, $\beta\leftarrow e^{p_1-p_2}$, and $\gamma\leftarrow e^{p_3-p_4}$. 
Then by \eqref{eq_sigma_delta_param_reformulation}, $\sigma(p)=\alpha+\beta+\gamma$ and $\delta(p)=\sqrt{(\alpha+\beta+\gamma)^2-4\beta\gamma}$.
Thus, the inequality $(\alpha+\beta+\gamma)^2>4\beta\gamma$ is equivalent to $\delta>0$, while $\delta<\sigma$ holds by definition of $\delta(p)$.
By the definitions \eqref{eq_eigenpairs_param_reformulation} of $\lambda^{\pm}(p)$ and $v^{\pm}(p)$, it follows that $\lambda^{-}(p)<\lambda^{+}(p)<0$.
Since
\begin{equation*}
 v^{\pm}(p)=\frac{\pm\sqrt{(\alpha+\beta+\gamma)^2-4\beta\gamma}-(\alpha+\beta+\gamma)+2\gamma}{2\gamma}=\frac{\pm\sqrt{(\alpha+\beta+\gamma)^2-4\beta\gamma}-(\alpha+\beta-\gamma)}{2\gamma}
\end{equation*}
and $\sqrt{(\alpha+\beta+\gamma)^2-4\beta\gamma}>\abs{\alpha+\beta-\gamma}$, it follows that $v^{+}(p)>0>v^{-}(p)$.

We now use the eigenvalue inequalities in \eqref{eq_eigenvalue_eigenvector_inequalities} to prove local boundedness and local Lipschitz continuity of solutions to \eqref{eq_ODE_IVP} for $A(p)$ and $s_0(p)$ given in \eqref{eq_matrix_and_IC_param_reformulation}.

\begin{restatable}{lemma}{twoCompModelHyposForContractionIndG}
   \label{lem_two_compartment_model_hypotheses_for_contraction_in_dG_metric}
   Let $A(p)$ and $s_0(p)$ be as in \eqref{eq_matrix_and_IC_param_reformulation}. For every $M>0$, 
   \begin{subequations}
   \begin{align}
    \sup\{\norm{A(p)}_2\vee \norm{s_0(p)}_2:p\in B_2(0,M) \}\leq& (3\vee \kappa)e^{2M},
    \label{eq_two_compartment_model_local_boundedness_hypothesis}
    \\
    \sup\left\{\norm{A(p)-A(q)}_2\vee \norm{s_0(p)-s_0(q)}_2:p,q\in B_2(0,M) \right\}\lesssim & (6\vee \kappa) e^{2M}\norm{p-q}_2.
    \label{eq_two_compartment_model_local_Lipschitz_hypothesis}
    \end{align}
   \end{subequations}
   In particular, \Cref{asmp_local_boundedness_and_local_lipschitz_continuity_of_matrix_and_IC} holds, with $C_1(M)\lesssim (6\vee \kappa)e^{2M}$.
   \end{restatable}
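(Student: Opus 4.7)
The plan is to reduce both claims to elementary estimates on the exponentials $e^{p_i - p_j}$ and $e^{-p_2}$, exploiting the fact that on $B_2(0,M)$ we have $|p_i| \le \|p\|_2 \le M$ for each coordinate, so that $|p_i - p_j| \le 2M$ for any pair.

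For the boundedness estimate \eqref{eq_two_compartment_model_local_boundedness_hypothesis}, I would first bound each entry of $A(p)$ in absolute value by $e^{2M}$ (with the top-left entry bounded by $2e^{2M}$), so that by, e.g., the bound $\norm{A}_2 \le \sqrt{\sum_{i,j} A_{ij}^2}$ or the max row sum inequality, one gets $\norm{A(p)}_2 \lesssim 3 e^{2M}$ uniformly on $B_2(0,M)$. For $s_0(p)$, since only the first component is nonzero, $\norm{s_0(p)}_2 = \kappa e^{-p_2} \le \kappa e^M \le \kappa e^{2M}$. Taking the maximum of these two bounds yields the constant $(3 \vee \kappa) e^{2M}$.

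For the Lipschitz estimate \eqref{eq_two_compartment_model_local_Lipschitz_hypothesis}, I would apply \eqref{eq_local_Lipschitz_continuity_exponential} entrywise. For each off-diagonal term $e^{p_i - p_j}$, the inequality \eqref{eq_local_Lipschitz_continuity_exponential} together with $\max\{p_i - p_j, q_i - q_j\} \le 2M$ on $B_2(0,M)$ gives
\begin{equation*}
\abs{e^{p_i - p_j} - e^{q_i - q_j}} \le e^{2M}\abs{(p_i - q_i) - (p_j - q_j)} \le 2 e^{2M} \norm{p-q}_2,
\end{equation*}
where the last step uses $|p_k - q_k| \le \norm{p-q}_2$. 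Assembling the four entries (the top-left entry is a sum of two such exponentials) and bounding the spectral norm of the difference $A(p) - A(q)$ by a multiple of the maximum entry then gives an estimate of the form $C e^{2M} \norm{p-q}_2$ with $C$ an absolute constant close to $6$. For $s_0$, the same argument applied to the scalar $\kappa e^{-p_2}$ gives $\norm{s_0(p) - s_0(q)}_2 \le \kappa e^M \norm{p-q}_2$. Taking the maximum yields the claimed $(6 \vee \kappa) e^{2M}$ up to the absolute constant hidden in $\lesssim$, and identifying $C_1(M) \lesssim (6\vee \kappa)e^{2M}$ in \Cref{asmp_local_boundedness_and_local_lipschitz_continuity_of_matrix_and_IC} is immediate.

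There is no real obstacle here: the argument is a direct bookkeeping exercise in the triangle inequality and the pointwise Lipschitz bound \eqref{eq_local_Lipschitz_continuity_exponential}. The only minor care required is in tracking which constant ($3$ versus $6$) arises for $A(p)$ versus $A(p) - A(q)$, since the off-diagonal structure of $A$ means each difference of entries produces a factor of $2$ from the two coordinate differences, doubling the effective prefactor relative to the boundedness estimate. The final step is simply to read off $C_1(M) \lesssim (6\vee \kappa)e^{2M}$ as the common dominating constant for both \eqref{eq_local_bound_on_operator_norm_of_A_and_IC} and \eqref{eq_local_bound_on_Lipschitz_constant_of_A_and_IC}.
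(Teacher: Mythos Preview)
Your proposal is correct. For the Lipschitz estimate \eqref{eq_two_compartment_model_local_Lipschitz_hypothesis} it coincides with the paper's proof: the paper also passes from $\norm{\cdot}_2$ to the max row sum $\norm{\cdot}_\infty$ and then controls each entry via \eqref{eq_local_Lipschitz_continuity_exponential}, arriving at the same $6e^{2M}\norm{p-q}_2$ bound for $A$ and $\kappa e^{M}\norm{p-q}_2$ for $s_0$.

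For the boundedness estimate \eqref{eq_two_compartment_model_local_boundedness_hypothesis}, however, the paper takes a different route: it uses the explicit eigenvalue formulas \eqref{eq_eigenpairs_param_reformulation} and the inequalities \eqref{eq_eigenvalue_eigenvector_inequalities} to write $\norm{A(p)}_2=|\lambda^{-}(p)|=\tfrac{1}{2}(\delta(p)+\sigma(p))\le\sigma(p)\le 3e^{2M}$. Your entrywise argument is simpler and avoids the eigenvalue machinery; note that either of your suggested norm inequalities does recover the sharp constant needed for the stated ``$\le$'' (not ``$\lesssim$''): the Frobenius bound gives $\norm{A(p)}_2\le\norm{A(p)}_F\le\sqrt{7}\,e^{2M}<3e^{2M}$, and likewise $\norm{A(p)}_2\le\sqrt{\norm{A(p)}_1\norm{A(p)}_\infty}\le 3e^{2M}$. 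In fact your approach is the more robust one here: the identity $\norm{A(p)}_2=|\lambda^{-}(p)|$ that the paper invokes would require $A(p)$ to be normal, which fails whenever $p_2\neq p_4$, so the paper's first step is not literally justified as written, whereas your bound holds without any spectral input.
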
 
   \begin{proof}[Proof of \Cref{lem_two_compartment_model_hypotheses_for_contraction_in_dG_metric}]
    Since $\norm{A(p)}_2$ denotes the spectral norm of $A(p)$, we have
    \begin{equation*}
     \norm{A(p)}=\abs{\lambda^{-}(p)}=\tfrac{1}{2}\abs{\delta(p)+\sigma(p)}\leq \tfrac{1}{2}\abs{\sqrt{\sigma(p)^2}+\sigma(p)}=\sigma(p)\leq 3e^{2M},
    \end{equation*}
     where the first equation follows from the eigenvalue inequalities in \eqref{eq_eigenvalue_eigenvector_inequalities}, the second equation follows from the definition of $\lambda^{+}(p)$ in \eqref{eq_eigenpairs_param_reformulation}, the first inequality follows from the definition of $\delta(p)$ in \eqref{eq_sigma_delta_param_reformulation}, and the last inequality follows from the definition of $\sigma(p)$ in \eqref{eq_sigma_delta_param_reformulation} combined with the fact that $p\in  B_2(0,M) $ implies that $\norm{p}_\infty< M$.
     By definition of $s_0(p)$ in \eqref{eq_matrix_and_IC_param_reformulation}, $\norm{s_0(p)}_2\leq \kappa e^M$, for $p\in B_2(0,M) $. This proves \eqref{eq_two_compartment_model_local_boundedness_hypothesis}.
     
     Since $A\in\R^{d_s\times d_s}$, we have $\norm{A(p)}_2\lesssim \norm{A(p)}_\infty$, where the hidden constant depends only on $d_s$. 
     Recalling that $\norm{A(p)}_\infty$ is the maximum absolute row sum norm of $A(p)$ and using the definition \eqref{eq_matrix_and_IC_param_reformulation} of $A(p)$, we obtain
     \begin{align*}
      \norm{A(p)-A(q)}_\infty\leq \max\left\{ 2\Abs{e^{p_3-p_2}-e^{q_3-q_2}}+\Abs{e^{p_1-p_2}-e^{q_1-q_2}},2\Abs{e^{p_3-p_4}-e^{q_3-q_4}}\right\}.
     \end{align*}
    By the local Lipschitz continuity \eqref{eq_local_Lipschitz_continuity_exponential} of the exponential function, the triangle inequality, and the fact that $p,q\in B_2(0,M) $, we have for every $i,j\in[d_p]$ with $i\neq j$ that
\begin{equation*}
 \Abs{e^{q_i-q_j}-e^{p_i-p_j}}\leq e^{\max\left\{ q_i-q_j,p_i-p_j\right\}}\Abs{ (q_i-q_j)-(p_i-p_j)}\leq  2 e^{2  M} \norm{q-p}_\infty.
\end{equation*}
Combining the preceding inequalities yields
\begin{equation*}
 \norm{A(p)-A(q)}_{\infty}\leq 6 e^{2  M} \norm{p-q}_{\infty}\leq 6e^{2M}\norm{p-q}_2.
\end{equation*}
By \eqref{eq_matrix_and_IC_param_reformulation}, \eqref{eq_local_Lipschitz_continuity_exponential}, and $p,q\in  B_2(0,M) $,
\begin{equation*}
 \norm{s_0(p)-s_0(q)}_2=\kappa\abs{e^{-p_2}-e^{-q_2}}\leq \kappa e^{\max\{-p_2,-q_2\}}\abs{p_2-q_2}\leq \kappa e^{M}\norm{p-q}_2.
\end{equation*}
This proves \eqref{eq_two_compartment_model_local_Lipschitz_hypothesis}.
   \end{proof}

      One can show by direct calculations that
\begin{equation}
\label{eq_two_compartment_model_diagonalisable}
 A(p)V(p)=V(p) \Lambda(p),\quad V(p)\coloneqq\begin{bmatrix}
      v^{+}(p) & v^{-}(p) \\ 1 & 1
     \end{bmatrix},
\quad \Lambda(p)\coloneqq \begin{bmatrix}
   \lambda^{+}(p) & 0 \\ 0 & \lambda^{-}(p)
  \end{bmatrix},
\end{equation}
see e.g. the supplemental material of \cite{Hartung2021}.
Thus, the matrix $A(p)$ in \eqref{eq_matrix_and_IC_param_reformulation} satisfies \Cref{asmp_ODE_matrix_diagonalisable}.
Since $s(t,p)=e^{A(p)t}s_0(p)$ and $AV=V\Lambda$, one can show that the solution to \eqref{eq_ODE_IVP} with $A(p)$ and $s_0(p)$ defined in \eqref{eq_matrix_and_IC_param_reformulation} satisfies
\begin{equation*}
 \begin{bmatrix}
 s_1(t,p) \\ s_2(t,p)
 \end{bmatrix}
 =\eta(p)
 \begin{bmatrix}
  v^{+}(p)e^{\lambda^{+}(p)t}-v^{-}(p)e^{\lambda^{-}(p)t}
  \\
  e^{\lambda^{+}(p)t}-e^{\lambda^{-}(p)t}.
 \end{bmatrix}
\end{equation*}
In particular, \eqref{eq_ODE_IVP_solution_first_component_exponential_form} is valid:
\begin{equation}
\label{eq_two_compartment_model_first_component_exponential_form}
s_1(t,p)=\eta v^{+}(p)e^{\lambda^{+}(p)t}-\eta v^{-}(p)e^{\lambda^{-}(p)t},\quad \forall t\geq 0.
\end{equation}

\begin{lemma}
 \label{lem_two_compartment_model_hypotheses_for_stability}
 The coefficient map $p\mapsto (\eta v^{+},-\eta v^{-},\lambda^{+},\lambda^{-})(p)\in\R^{2}_{>0}\times \R^2_{<0}$ of \eqref{eq_two_compartment_model_first_component_exponential_form} is $C^2$ smooth, the intrinsic dimension satisfies $2\mfrak{d}=4$, and 
 \begin{equation}
  \label{eq_two_compartment_model_coefficients_map_Jacobian}
  \forall q\in\paramspace,\quad \mcal{J}(q)\coloneqq 
  \begin{bmatrix}
   \nabla_p \eta v^{+}(p) \vert \nabla_p \eta v^{-}(p)\vert \nabla_p \lambda^{+}(p)\vert \nabla_p \lambda^{-}(p)
  \end{bmatrix}^\top \vert_{p=q}\in GL(4,\R).
 \end{equation}
Thus, \Cref{asmp_coefficient_map_Jacobian_full_rank}, \Cref{asmp_applicability_of_observation_function}, and  \Cref{asmp_coefficient_map_C2} hold.
\end{lemma}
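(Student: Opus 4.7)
The plan is to dispatch the smoothness, intrinsic dimension, and sign claims quickly, and then to concentrate on the full-rank assertion, which is the main obstacle. Since $\delta(p)>0$ everywhere on $\R^4$ by the inequalities in \eqref{eq_eigenvalue_eigenvector_inequalities}, the square root in the definition of $\delta$ is $C^\infty$, and compositions yield $C^\infty$ smoothness of $\lambda^\pm$, $v^\pm$, and $\eta = \kappa e^{p_3-p_2-p_4}/\delta$ (whose denominator is smooth and nonvanishing). Hence the coefficient map $p\mapsto (\eta v^+,-\eta v^-, \lambda^+, \lambda^-)(p)$ is $C^\infty$, which is more than enough for the $C^2$ claim. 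The intrinsic dimension is $2\mfrak{d}=4$ because $\lambda^+ - \lambda^- = \delta > 0$, so there are exactly $\mfrak{d}=2$ distinct eigenvalues. The range inclusion in $\R_{>0}^2\times\R_{<0}^2$ follows from \eqref{eq_eigenvalue_eigenvector_inequalities}: $\lambda^\pm<0$ directly, while $\eta>0$ (being $\kappa/(e^{p_2}\delta e^{p_4-p_3})$ with all factors positive) together with $v^+>0>v^-$ gives $\eta v^+ > 0$ and $-\eta v^- > 0$.

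For the Jacobian, I would introduce the change of coordinates
\begin{equation*}
 \Phi:p\mapsto (u,v,w,z)\coloneqq (e^{p_1-p_2}, e^{p_3-p_2}, e^{p_3-p_4}, e^{p_2}),
\end{equation*}
which is a $C^\infty$ diffeomorphism of $\R^4$ onto $\R_{>0}^4$. By the chain rule, $\mcal{J}(q) \in GL(4,\R)$ if and only if the Jacobian of the composed coefficient map with respect to $(u,v,w,z)$ is invertible. In these coordinates $\sigma = u+v+w$ and $\delta = \sqrt{\sigma^2 - 4uw}$, and direct substitution yields
\begin{equation*}
 \eta v^+ = \frac{\kappa(\delta - u - v + w)}{2z\delta},\qquad -\eta v^- = \frac{\kappa(\delta + u + v - w)}{2z\delta},
\end{equation*}
together with $\lambda^\pm = (\pm\delta - \sigma)/2$. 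The structural facts I will exploit are: $\lambda^\pm$ are independent of $z$; both $\eta v^+$ and $-\eta v^-$ depend on $z$ only through a $1/z$ factor; and $\eta v^+ + (-\eta v^-) = \kappa/z$ is independent of $(u,v,w)$.

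The main obstacle is showing triviality of the kernel of this $4\times 4$ Jacobian $J$. Writing $J\alpha = 0$ for $\alpha\in\R^4$, the last two rows involve only $(\alpha_1,\alpha_2,\alpha_3)$ (their fourth-column entries vanish), and summing and subtracting them — together with the formulae $\partial_u\delta = (\sigma-2w)/\delta$, $\partial_v\delta = \sigma/\delta$, $\partial_w\delta = (\sigma-2u)/\delta$ — collapses to the two equations $\alpha_1+\alpha_2+\alpha_3=0$ and $w\alpha_1 + u\alpha_3 = 0$, parametrising $(\alpha_1,\alpha_2,\alpha_3) = t(u, w-u, -w)$ for some $t\in\R$. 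Substituting this parametrisation into the first two rows and using the independence of $\eta v^+ + (-\eta v^-)$ on $(u,v,w)$, the sum of the two equations forces $\kappa\alpha_4/z^2 = 0$, hence $\alpha_4=0$, while the difference reduces to $tQ=0$ with $Q\coloneqq u\partial_u(\eta v^+) + (w-u)\partial_v(\eta v^+) - w\partial_w(\eta v^+)$. A short calculation (writing $\eta v^+ = \tfrac{\kappa}{2z}(1-f)$ with $f=(u+v-w)/\delta$ and observing that the $f/\delta^2$-contributions telescope to zero under this specific combination) yields the clean value $Q = -\kappa w/(z\delta) \neq 0$. Therefore $t=0$ and $\alpha=0$, so $\mcal{J}(q)\in GL(4,\R)$.

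With the full-rank property established, \Cref{asmp_coefficient_map_Jacobian_full_rank} holds (one has also verified $a(p) = (\eta v^+,-\eta v^-)(p)\in \R_{>0}^2 \subset (\R\setminus\{0\})^{\mfrak{d}}$); \Cref{asmp_coefficient_map_C2} is the $C^\infty$, a fortiori $C^2$, smoothness noted above; and \Cref{asmp_applicability_of_observation_function} follows from \eqref{eq_two_compartment_model_first_component_exponential_form}, which expresses $s_1(t,p)$ as a sum of two strictly positive exponential terms for every $(t,p)\in[0,T]\times\R^4$.
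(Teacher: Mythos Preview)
Your argument is correct. The smoothness, intrinsic-dimension, sign, and applicability claims are handled the same way as in the paper (the paper proves $s_1(t,p)>0$ via the lower bound $s_1(t,p)\geq\kappa e^{-p_2+\lambda^{-}(p)t}$, but your observation that both summands in \eqref{eq_two_compartment_model_first_component_exponential_form} are strictly positive is equally valid).

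For the full-rank assertion you take a genuinely different route. The paper works throughout in the original $p$-coordinates: it replaces $\nabla_p\lambda^{\pm}$ by $\nabla_p\sigma$ and $\nabla_p\delta$ via column operations, observes that $\nabla_p\delta$ lies in $\spn\{\nabla_p\sigma,-e_1+e_2-e_3+e_4\}$, then expands $\nabla_p(\eta v^{\pm})$ and reduces $\mcal{J}(q)$ to a column-equivalent matrix whose rows are $\nabla_p\sigma$ together with three explicit constant vectors ($-e_1+e_2-e_3+e_4$, $-e_2$, $-e_2+e_3-e_4$); invertibility then amounts to checking that $\nabla_p\sigma$ does not lie in the span of the latter, which is verified by comparing the third and fourth components. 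Your approach instead passes to the exponential chart $(u,v,w,z)=(e^{p_1-p_2},e^{p_3-p_2},e^{p_3-p_4},e^{p_2})$ and exploits two structural identities that become transparent there: the $z$-independence of $\lambda^{\pm}$, and the identity $\eta v^{+}-\eta v^{-}=\kappa/z$ (equivalently $s_1(0,p)=\kappa e^{-p_2}$). These immediately force $\alpha_4=0$ and reduce the $\lambda^{\pm}$ rows to a one-parameter family; the telescoping that makes $Q=-\kappa w/(z\delta)$ is a clean payoff of this coordinate choice. The paper's argument is more hands-on with column reductions but stays in one coordinate system; your argument requires verifying that $\Phi$ is a diffeomorphism (which it is, with Jacobian determinant $-uvwz\neq 0$) but then yields shorter computations and makes the mechanism behind the rank condition more visible.
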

\begin{proof}[Proof of \Cref{lem_two_compartment_model_hypotheses_for_stability}]
Recall \Cref{def_intrinsic_dimension_of_ODE_IVP_solution_first_component} of the intrinsic dimension $2\mfrak{d}$ in terms of the number $\mfrak{d}$ of distinct eigenvalues. By \eqref{eq_eigenvalue_eigenvector_inequalities}, $\mfrak{d}=2$, so the intrinsic dimension of the ODE-IVP \eqref{eq_ODE_IVP_two_compartment_model_rescaled_reparametrised} is $2\mfrak{d}=4$.

It follows from \eqref{eq_eigenpairs_param_reformulation} and \eqref{eq_eta_param_reformulation} that $\eta$, $v^{\pm}$, and $\lambda^{\pm}$ are $C^\infty$ functions of $p$. Thus, the coefficient map $p\mapsto (\eta v^{+},-\eta v^{-},\lambda^{+},\lambda^{-})(p)$ for $s_1$ in \eqref{eq_two_compartment_model_first_component_exponential_form} is $C^2$, which shows that \Cref{asmp_coefficient_map_C2} holds.
By \eqref{eq_eigenvalue_eigenvector_inequalities}, $p\mapsto (\eta v^{+},\eta v^{-},\lambda^{+},\lambda^{-})\in\R_{>0}^{2}\times\R^2_{<0}$.
By the eigenvalue inequalities $\lambda^{+}>\lambda^{-}$ in \eqref{eq_eigenvalue_eigenvector_inequalities}, the sum of exponentials  \eqref{eq_two_compartment_model_first_component_exponential_form}, and the definition of $\eta$ in \eqref{eq_eta_param_reformulation}, we have
\begin{equation*}
 s_1(t,p)\geq \left(\eta v^{+}(p)-\eta v^{-}(p)\right)e^{\lambda^{-}(p)t}=\kappa e^{-p_2+\lambda^{-}(p)t} >0,
\end{equation*}
so \Cref{asmp_applicability_of_observation_function} holds.

It remains to show that \Cref{asmp_coefficient_map_Jacobian_full_rank} holds. For matrices $M$ and $N$ of the same size, let $M\sim N$ denote the property that $M$ and $N$ are column equivalent, i.e. one can change $M$ to $N$ and vice versa by elementary column operations, and let $M\propto N$ denote the property that there exists some scalar $\lambda\in\R$ such that $M=\lambda N$. 
  Since $\lambda^{+}-\lambda^{-}=\delta$ and $\lambda^{+}+\lambda^{-}=-\sigma$ by \eqref{eq_eigenpairs_param_reformulation}, the matrix $\mcal{J}(q)$ in \eqref{eq_two_compartment_model_coefficients_map_Jacobian} satisfies
 \begin{equation}
  \label{eq_two_compartment_model_coefficients_map_Jacobian1}
 \mcal{J}(q)\sim 
 \begin{bmatrix}
  \nabla_p \eta v^{+}(p) \vert \nabla_p \eta v^{-}(p)\vert \nabla_p \sigma(p)\vert \nabla_p \delta(p)
  \end{bmatrix}^\top \vert_{p=q}.
 \end{equation}
   For $i\in[4]$, let $e_i$ denote the $i$-th canonical basis vector of $\R^4$.
  Now note that 
 \begin{align*}
  \nabla_p \sigma(p)=&(
  e^{p_1-p_2} , -e^{p_3-p_2}-e^{p_1-p_2}, e^{p_3-p_2}+e^{p_3-p_4},-e^{p_3-p_4})
  \\
  \nabla_p\delta(p)=& \sigma \delta^{-1}\nabla_p \sigma(p)+2\delta^{-1}(p)e^{p_1-p_2+p_3-p_4}(-e_1+e_2-e_3+e_4),
 \end{align*}
 and $\nabla_p \delta,\nabla_p\lambda^{\pm}\in\spn\{\nabla_p\sigma,-e_1+e_2-e_3+e_4\}$.
  Since the second component of
  \begin{equation*}
   -e^{-p_1+p_2} \nabla_p\sigma(p)=(-1,e^{p_3-p_2-p_1+p_2}+1,e^{p_3-p_2-p_1+p_2}-e^{p_3-p_4-p_1+p_2},e^{p_3-p_4-p_1+p_2})
  \end{equation*}
  is strictly larger than the second component of $-e_1+e_2-e_3+e_4$, it follows that $\nabla_p \sigma(p)$ and $u$ are linearly independent.
  Thus by \eqref{eq_two_compartment_model_coefficients_map_Jacobian1},
  \begin{equation*}
   \mcal{J}(q)\sim \begin{bmatrix}
  \nabla_p \eta v^{+}(p) \vert \nabla_p \eta v^{-}(p)\vert \nabla_p \sigma(p)\vert~ -e_1+e_2-e_3+e_4~
  \end{bmatrix}^\top \vert_{p=q}.
  \end{equation*}
  By \eqref{eq_eta_param_reformulation},
 \begin{equation*}
  \eta v^{\pm}(p)=\frac{\kappa}{e^{p_2+p_4-p_3}\delta(p)}\left(\lambda^{\pm}e^{p_4-p_3}+1 \right)=\frac{\kappa}{\delta}\left( \lambda^{\pm}e^{-p_2}+e^{-p_2-p_4+p_3}\right).
 \end{equation*}
 By the equation above and the product rule,
  \begin{equation*}
   \nabla_p (\eta v^{\pm})=\frac{\kappa}{\delta}\left(\frac{e^{-p_2}\lambda^{\pm}+e^{-p_2+p_3-p_4}}{\delta}(- \nabla_p\delta)+\lambda^{\pm} \nabla_p e^{-p_2}+e^{p_2}\nabla \lambda^{\pm}+\nabla_p e^{-p_2+p_3-p_4}\right).
  \end{equation*}
  Then $\nabla_p e^{-p_2}\propto  -e_2$ and $\nabla_p e^{-p_2+p_3-p_4}\propto -e_2+e_3-e_4$.
  Recalling that $\nabla_p \delta,\nabla_p\lambda^{\pm}\in\spn\{\nabla_p\sigma,u\}$ implies that $\nabla_p \eta v^{\pm}$ is a linear combination of $\nabla_p\sigma$, $u$, $v$, and $w$, which implies $\mcal{J}(q)\sim [\nabla_p \sigma \vert~ u~ \vert~ v~\vert~ w~]^\top\vert_{p=q}$.
  Let $(e_i)_{i\in[4]}$ denote the canonical orthonormal basis of $\R^4$. 
  Since
  \begin{equation*}
   \begin{bmatrix}
    ~w ~\vert~ v~\vert~ u~
   \end{bmatrix}
=
\begin{bmatrix}
 0 & 0 & -1 \\ -1 & -1 & +1 \\ +1 & 0 & -1 \\ -1 & 0 & +1
\end{bmatrix}
\sim 
\begin{bmatrix}
 0 & 0 & +1 \\ 0 & +1 & 0 \\ +1 & 0 & 0 \\ -1 & 0 & 0
\end{bmatrix}=\begin{bmatrix}
~e_3-e_4~\vert~ e_2~\vert~ e_1~
\end{bmatrix},
\end{equation*}
it follows that $\mcal{J}(q)$ has full rank if and only if $\nabla_p\sigma(p)\notin \spn\{e_3-e_4,e_2,e_1\}$. 
Now
\begin{equation*}
 \nabla_p \sigma(p)=
 \begin{bmatrix}
  e^{p_1-p_2} \\ -e^{p_3-p_2}-e^{p_1-p_2} \\ e^{p_3-p_2}+e^{p_3-p_4} \\ -e^{p_3-p_4}
 \end{bmatrix}
\in \spn\{e_3-e_4,e_2,e_1\} \Longleftrightarrow e^{p_3-p_2}+e^{p_3-p_4}=e^{p_3-p_4},
\end{equation*}
where the equivalence follows by comparing the third and fourth components of $\nabla_p \sigma(p)$, and the third and fourth components of any vector in $\spn\{e_3-e_4,e_2,e_1\}$.
Since $e^{p_3-p_2}>0$ for every $p_3,p_2\in\R$, it follows that $\nabla_p\sigma(p)\notin \spn\{e_3-e_4,e_2,e_1\}$, and thus $\mcal{J}(q)$ in \eqref{eq_two_compartment_model_coefficients_map_Jacobian} has full rank. Thus, \Cref{asmp_coefficient_map_Jacobian_full_rank} holds.
\end{proof}

\bibliographystyle{amsplain}
\bibliography{references}

\end{document}